\newcommand{\ra}{\rightarrow}
\newcommand{\comment}[1]{}
\newtheorem{theorem}{Theorem}
\newtheorem{prop}{Proposition} 
\newtheorem{cor}{Corollary}
\newtheorem{lemma}{Lemma}
\theoremstyle{definition} 
\newtheorem{defn}{Definition}
\newtheorem{example}{Example}
\newtheorem{remark}{Remark}
\newtheorem*{thmwn}{Theorem}
\newcommand{\C}{\mbox{$\mathbb C$}}     
\begin{document}
\title{On Cohomology Theory for Topological Groups}

\author{Arati S. Khedekar}
\author{C. S. Rajan}
\address{School of Mathematics, Tata Institute of Fundamental Research, Homi Bhabha Road
Mumbai- 400005, India.}
\date{}
\email{arati@math.tifr.res.in, rajan@math.tifr.res.in}
\subjclass{Primary 22D99;  Secondary 22E99, 46M20, 57T10}
\begin{abstract}
We construct some new cohomology theories for
topological groups and Lie groups and study some of its basic properties. For example, we 
introduce a cohomology theory based on measurable cochains which are continuous in a 
neighbourhood of identity. We show that if $G$ and $A$ are locally compact and second countable, then the 
second cohomology group based on  locally continuous measurable cochains as above parametrizes
the collection of locally split extensions of $G$ by $A$. 
\end{abstract}

\maketitle

\section{Introduction}
\label{introduction}
\pagenumbering{arabic}\setcounter{page}{1}
The cohomology theory of topological groups has been studied from different
perspectives by W. T. van Est, Mostow, Moore, Wigner and recently Lichtenbaum 
amongst others. W. T. van Est developed a cohomology theory using  continuous
 cochains in analogy 
with the cochain construction of cohomology theory of finite groups. However,
 this definition of 
cohomology groups has a drawback, in  that it gives long exact sequences of 
cohomology groups only 
for those short exact sequences of modules that are topologically split. 

 Based on a theorem of Mackey ({\it cf.} \cite{dixmier}),
  guaranteeing the existence
 of measurable cross sections for locally compact groups, 
Moore developed a 
cohomology theory of topological groups using measurable cochains in place of 
continuous 
cochains.  This cohomology theory works for the category of Polish groups $G$
and $G$-modules $A$ which are again Polish. We recall,
a topological group $G$ is said to be Polish, if its topology is induced by a complete
separable metric on $G$.
This theory satisfies the nice properties expected from a
cohomology theory ({\it cf.} \cite{moore3}) viz., there exists 
long exact sequences of cohomology groups of a Polish group $G$ 
 corresponding to a short exact sequence of 
Polish $G$-modules, the correct interpretation of the first measurable
cohomology as the space of continuous crossed
homomorphism, when $G$ and $A$ are locally compact; an interpretation 
of the second cohomology  $H^2_m(G, A)$ 
 in terms of topological extensions of $G$ by $A$, etc.  
Here $H^*_m(G,A)$ denotes the Moore cohomology group of a topological group $G$ 
and  a topological $G$-module $A$.
It further agrees with the van Est continuous cohomology groups, when $G$ is 
profinite and the coefficient module $A$ is discrete. 

The cohomology theory developed by Moore has had numerous applications (for some recent applications and also for further results on Moore cohomology groups see \cite{austin}).  
The motivation for us  to consider the cohomology theory of topological groups, is to explore the possibility of deploying such theories to the study of the non-abelian reciprocity laws as conjectured by Langlands, just as the continuous cohomology theory of Galois groups has proved to be immensely successful in class field theory. In this context, the analogues of the Galois group like the Weil group $W_k$ attached to a number field $k$ (or the conjectural Langlands group whose finite dimensional representations are supposed to parametrize automorphic representations) are no longer profinite but are locally compact. Indeed such a motivation led the second author to generalize a classical theorem of Tate on the vanishing of the Schur cohomology groups to the context of Weil groups (\cite{rajan}), to  show that $H^2_m(W_k, \C^*)$ vanishes, where  we impose the trivial module structure on  $\C^*$. 

The immediate inspiration for us is the recent work of Lichtenbaum (\cite{lich}), where he outlines deep conjectures explaining the special values of zeta functions of varieties in terms of Weil-\'{e}tale cohomology. Here the cohomology of the generic fibre turns out to be the cohomology of the Weil group. Lichtenbaum studies the cohomology theory of topological groups from an abstract viewpoint based on the work of Grothendieck (\cite{sga4}), where he embeds the category of $G$-modules in a larger abelian category with sufficiently many injectives. The cohomology groups are then the right derived functors of the functor of invariants (we refer to the paper by Flach (\cite{flach}) for more details and applications to the cohomology of Weil groups). Lichtenbaum imposes a Grothendieck topology on the category of $G$-spaces, where the covers have local sections. The required abelian category is the category of sheaves with respect to this Grothendieck topology. 

In this paper, we introduce a new cohomology theory of topological groups. We modify Moore's construction and impose a local regularity condition on the cochains in a neighbourhood of identity (like continuity or smoothness in the context of Lie groups) but assume the cochains to be measurable everywhere. 
The basic observation which makes this possible is the following: given a 
short exact sequence of Lie groups
\[1\rightarrow G'\rightarrow G \rightarrow G'' \rightarrow 1,\]
there is a continuous section from a neighbourhood of identity in $G''$
to $G$. More generally,
using the solution to Hilbert's fifth problem and the observation 
for Lie groups, Mostert ({\it cf.} \cite{mostert}) showed 
that every short exact sequence of finite dimensional 
locally compact groups 
\[1\rightarrow G'\rightarrow G \overset\pi\rightarrow G'' \rightarrow 1\]
is locally split {\it i.e.}, there exists a continuous section from a
neighbourhood of identity in $G''$ to $G$. 

Define 
the group of $0$-cochains $C^0(G,A)$ to be $A$. For $n\geq 1$, define the 
group $C^n_{lcm}(G,A)$ of locally continuous measurable cochains to be the space of all measurable 
functions $f\colon G^n\ra A$ which are continuous in a neighbourhood of the identity in $G^n$. 
The coboundary map is
given by the standard formula. 
 Now we define our locally continuous 
cohomology theory $H^n_{lcm}(G,A)$ as the cohomology of this cochain complex.
These cohomology groups interpolate the continous cohomology and 
the measurable cohomology theory of Moore: there are natural maps
\[H^n_{c}(G,A)\to H^n_{lcm}(G,A)\to H^n_{m}(G,A),\]
where $H^n_{c}(G,A)$ denotes the continuous cohomology groups of $G$ with values in $A$. 

For the category of Lie groups,
 we replace  continuity with the property of being smooth
 around identity and we define the 
locally smooth measurable cohomology 
theory (denoted by $\lbrace H^n_{lsm}(G,A)\rbrace_{n\geq 0}$ ) 
of a Lie group  $G$ that acts smoothly  on $A$. 
Similarly, we can define locally holomorphic 
measurable cohomology theory (denoted as 
 $\lbrace H^n_{lhm}(G,A)\rbrace_{n\geq 0}$) in the holomorphic 
category, based on measurable cochains holomorphic in a
neighbourhood of identity.

We remark out here that although the cohomology theories developed by Moore and Lichtenbaum seem sufficient for many purposes, the richness of applications of cohomology arises from the presence of different cohomology theories that can be compared to each other. The multiplicity of such theories allow the use of cohomological methods in a variety of contexts. In this regard, we expect that the principle of imposing local regularity on the cochains, will allow it's use in more geometric and arithmetical contexts. For example, it is tantalizing to explore the relationship of these theories to the measurable Steinberg $2$-cocycle (\cite{moore2}), which is continuous on a dense open subset (but not at identity!).

These locally regular cohomology theories can be related  
to the underlying category theoretic properties of the group and it's modules. 
For example, suppose there is an 
extension \[1\ra A\ra E\ra G\ra 1\] of $G$ by $A$ given 
by a measurable $2$-cocycle. From the construction 
of this extension (as given by Moore, {\it cf.} \cite[page 30]{moore3}), it seems 
difficult to relate the topology 
of $E$ to that of $G$ and $A$.  
If $G$ and $A$ are locally compact, it is a difficult theorem of Mackey that  $E$ 
is locally compact ({\it cf. }\cite{mackey}). 
Another difficulty arises, when 
we work with a  Lie group  $G$ and a smooth
 $G$-module $A$. It is not clear when an extension of $G$ by $A$ defined by
a measurable $2$-cocycle is a Lie group. Further, there does not 
seem to be any  relationship 
between the Moore cohomology groups and the cohomology 
groups of the associated Lie Algebra and its module.

We now describe some of our results towards establishing the legitimacy of these theories. 
It can be seen that these locally regular cohomology theories
are cohomological, in that there exists long exact sequence of
cohomology groups associated to locally split short exact sequences
of modules. Further, the zeroth cohomology group is the space
$A^G$ of $G$-invariant elements in $A$. There exists a natural map

\[H^n_{lcm}(G,A)\to H^n_m(G,A).\] When $G$ and $A$ are
Lie groups and the $G$-action is smooth, the following
are natural maps between cohomology groups.
\[ H^n_{lsm}(G,A)\to H^n_{lcm}(G,A)\to H^n_m(G,A).\]

For any topological group $G$ and continuous $G$-module $A$, the 
first cohomology group 
\[H^1_{lcm}(G,A)=\lbrace c:G\ra A~|~c(st)=c(s)+s\cdot c(t),~c\textnormal{ is continuous}\rbrace.\]
When  $G,~A$ are locally compact, second countable topological groups, it follows
by a theorem of Banach, that measurable crossed homomophisms from $G$ 
to $A$ are continuous. Thus we have
\[H^1_m(G,A)=H^1_{lcm}(G,A)=H^1_{cont}(G,A).\]
For a Lie group $G$ and smooth $G$-module $A$ (a smooth $G$-module $A$ is an abelian Lie group such that 
the action $G\times A\to A$ is smooth) which is locally compact
and second countable,
the first cohomology group agrees with Moore cohomology group.
\[H^1_m(G,A)=H^1_{lcm}(G,A)=H^1_{lsm}(G,A).\]
which is the group of all smooth crossed homomorphisms from $G$ to $A$.
The advantage of working with locally regular cohomology can be 
seen in the holomorphic category.
\begin{prop}\label{firstholom}
Given a complex Lie group $G$ and a holomorphic $G$-module $A$,
 $H^1_{lhm}(G,A)$ is the group of all 
holomorphic crossed homomorphisms from $G$ to $A$.
\end{prop}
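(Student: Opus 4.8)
The plan is to unwind $H^1_{lhm}(G,A)$ as $Z^1_{lhm}(G,A)/B^1_{lhm}(G,A)$ and to show that every $1$-cocycle is automatically holomorphic on all of $G$; the mechanism is that the crossed-homomorphism identity propagates holomorphy outward from a neighbourhood of the identity. With the standard coboundary formula, a $1$-cocycle in $C^1_{lhm}(G,A)$ is a measurable map $f\colon G\to A$ which is holomorphic on some open neighbourhood $U$ of $e$ and satisfies $f(st)=f(s)+s\cdot f(t)$ for all $s,t\in G$; the $1$-coboundaries are the principal crossed homomorphisms $f_a\colon s\mapsto s\cdot a-a$, $a\in A$.

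First I would fix $g\in G$ and show that $f$ is holomorphic on the open neighbourhood $gU$ of $g$. For $x\in gU$ the cocycle relation gives $f(x)=f(g)+g\cdot f(g^{-1}x)$. Here $x\mapsto g^{-1}x$ is a biholomorphism $gU\to U$ (left translation in the complex Lie group $G$), $f|_U$ is holomorphic by hypothesis, $a\mapsto g\cdot a$ is holomorphic because $A$ is a holomorphic $G$-module (so $G\times A\to A$, and hence its restriction to $\{g\}\times A$, is holomorphic), and $a\mapsto f(g)+a$ is holomorphic as a translation in the Lie group $A$. Composing these four holomorphic maps shows $f|_{gU}$ is holomorphic. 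As holomorphy is a local property and $\{gU\}_{g\in G}$ is an open cover of $G$, the map $f$ is holomorphic on $G$. Conversely, a holomorphic crossed homomorphism is continuous, hence measurable, and is trivially holomorphic near $e$, so it is a $1$-cocycle in $C^1_{lhm}(G,A)$; thus $Z^1_{lhm}(G,A)$ is precisely the group of holomorphic crossed homomorphisms $G\to A$. Since each coboundary $f_a$ is holomorphic (again because the action is holomorphic), $B^1_{lhm}(G,A)$ consists of the principal such homomorphisms, and passing to the quotient yields the asserted description.

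I do not anticipate a genuine obstacle: the whole content is the spreading of local holomorphy along the cocycle identity, and the only hypothesis that really enters is that $G$ acts on $A$ by holomorphic maps — the exact analogue of the continuity or smoothness of the action used in the locally continuous and locally smooth cases. It is worth noting that, unlike the Moore-cohomology computation $H^1_m=H^1_{lcm}=H^1_{cont}$, which requires a regularity theorem of Banach to upgrade a merely measurable crossed homomorphism to a continuous one, here the $C^1_{lhm}$ cocycle condition already forces global holomorphy directly; this is precisely the kind of simplification that imposing local regularity on the cochains is meant to afford.
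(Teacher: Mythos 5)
Your proof is correct and follows essentially the same route as the paper: the paper proves the analogous continuous statement by propagating regularity from $U$ to $xU$ via $c(xs)=x\cdot c(s)+c(x)$, and merely observes that the identical argument works verbatim in the holomorphic setting, which is exactly your decomposition of $f|_{gU}$ into a composition of holomorphic maps. Your explicit handling of $B^1_{lhm}$ as the principal crossed homomorphisms, and your closing remark that no Banach-type regularity theorem is needed here, are both consistent with (indeed slightly more careful than) the paper's presentation.
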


The main theorem of this paper is to 
show that  the second cohomology group $H^2_{lcm}(G,A)$
for locally compact second countable $G$ and $A$,
 parametrizes all the
locally split extensions of $G$ by $A$ .

\begin{theorem} 
\label{lcm}
If $G,~A$ are both locally compact, second countable topological groups, the
second cohomology group, $H^2_{lcm}(G,A)$ parametrizes
all the isomorphism classes of extensions  $E$ of $G$ by $A$,
\[1\ra A\overset\imath\ra E\overset\pi\ra G\ra 1\]
which are locally split.
\end{theorem}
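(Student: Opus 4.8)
The plan is to produce mutually inverse maps between $H^2_{lcm}(G,A)$ and the set of isomorphism classes of locally split extensions of $G$ by $A$, refining the classical correspondence between $2$-cocycles and extensions while keeping careful track of the interaction between the Borel and the topological structures. To pass from extensions to cohomology, given a locally split extension $1\ra A\overset{\imath}{\ra}E\overset{\pi}{\ra}G\ra 1$, I would first choose a section $s\colon G\ra E$ of $\pi$ that is Borel on all of $G$, normalised by $s(e)=e$, and that agrees on some neighbourhood of $e$ with the continuous section furnished by the hypothesis: a global Borel section exists by Mackey's measurable cross-section theorem (\cite{dixmier}), and a Borel splicing of the two sections retains the local continuity. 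Then $f(g,h):=\imath^{-1}\bigl(s(g)s(h)s(gh)^{-1}\bigr)$ is a Borel cochain which near $(e,e)$ is assembled from the continuous section by continuous operations, hence is continuous there, and which satisfies the cocycle identity; so it represents a class $[f]\in H^2_{lcm}(G,A)$. Changing $s$ alters $f$ by the coboundary of a function $G\ra A$ that is Borel and continuous near $e$, i.e.\ by a coboundary in $C^1_{lcm}(G,A)$, while a topological isomorphism of extensions carries admissible sections to admissible sections without changing $f$ at all; thus $[f]$ depends only on the isomorphism class of the extension, and we obtain a map $\Phi$.

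For the converse map $\Psi$, given a normalised cocycle $f\in C^2_{lcm}(G,A)$, I would form the group $E_f=A\times G$ with the twisted product $(a,g)(b,h)=(a+g\cdot b+f(g,h),gh)$; associativity is exactly the cocycle identity. Equipped with the product Borel structure, $E_f$ is a standard Borel group with Borel multiplication, so by the difficult theorem of Mackey (\cite{mackey}) it admits a locally compact second countable topology, compatible with its Borel structure, for which $a\mapsto(a,e)$ and $(a,g)\mapsto g$ are continuous; this exhibits $E_f$ as a topological extension of $G$ by $A$. If $f$ is replaced by a cohomologous cocycle $f+\delta h$, with $h$ Borel and continuous near $e$, then $(a,g)\mapsto(a-h(g),g)$ is a Borel group isomorphism $E_f\to E_{f+\delta h}$ over $G$ inducing the identity on $A$, hence a topological isomorphism of extensions by automatic continuity (a Borel homomorphism of locally compact second countable groups is continuous, so it carries one Mackey topology to the other). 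Granting the local splitting of $E_f$ (addressed below), this yields the map $\Psi$.

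That $\Phi$ and $\Psi$ are mutually inverse then follows from the familiar computations, once one knows the relevant maps are Borel: the Borel section $g\mapsto(0,g)$ of $E_f$ recovers the cocycle $f$, giving $\Phi\circ\Psi=\mathrm{id}$; and for a locally split extension $E$ with chosen section $s$, the map $(a,g)\mapsto\imath(a)s(g)$ is a group isomorphism $E_f\iso E$ over $G$ and the identity on $A$, which is Borel because $s$ is, hence a topological isomorphism of extensions, giving $\Psi\circ\Phi=\mathrm{id}$.

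The step I expect to be the main obstacle is the claim, used above, that the extension $E_f$ built from a locally continuous cocycle is itself locally split --- equivalently, that in its Mackey topology the Borel section $g\mapsto(0,g)$ is continuous on a neighbourhood of $e$. This is precisely where the local regularity of $f$, rather than mere measurability (which is all that enters Moore's theory), has to be exploited. I would proceed as follows: choose a symmetric neighbourhood $V$ of $e$ in $G$ so small that the twisted product is continuous on $\pi^{-1}(V)\times\pi^{-1}(V)$ for the product topology on $A\times V$ --- possible since $f$ is continuous near $(e,e)$ and the $G$-action on $A$ is continuous. Then $A\times V$ with the product topology is a locally compact second countable \emph{local} topological group, and the identity map onto $\pi^{-1}(V)\subseteq E_f$ carrying the subspace Mackey topology is a Borel bijection intertwining the two (partially defined) multiplications. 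A localised form of the automatic-continuity theorem --- equivalently, uniqueness of a locally compact second countable topology on $\pi^{-1}(V)$ compatible with its Borel structure and its local group law --- then forces this map to be a homeomorphism, so that $g\mapsto(0,g)$ is continuous near $e$ and $\sigma(g)=(0,g)$ serves as the required continuous local section. Establishing this localised automatic-continuity statement, together with the invocation of Mackey's local-compactness theorem for $E_f$ and the careful passage between the Borel and topological categories in the constructions above, is the technical heart of the proof.
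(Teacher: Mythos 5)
Your first direction (extension $\Rightarrow$ cocycle via a Borel section spliced with the local continuous one) and the bookkeeping showing $\Phi$ and $\Psi$ are well defined and mutually inverse all match the paper and are fine. The genuine gap is exactly where you flag it: the ``localised automatic-continuity statement'' --- that the Mackey topology on $\pi^{-1}(V)$ must coincide with the product topology on $A\times V$ because both are locally compact second countable topologies compatible with the Borel structure and the (partial) group law --- is not a citable result. The Banach/Weil automatic continuity theorem and the uniqueness clause in Mackey's theorem are statements about \emph{global} groups; for a local group one cannot simply quote them, because their proofs run through a Steinhaus-type argument (a set of positive Haar measure has $MM^{-1}$ containing a neighbourhood of $e$) which itself presupposes a left-invariant measure and a convolution-continuity lemma on the object in question. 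Supplying that for the local group $A\times V$ is not a routine localisation; it is the entire technical content of the theorem, so as written your proof defers rather than closes the hard step. A secondary, structural point: by routing everything through Mackey's theorem you also reintroduce precisely the dependence the paper is trying to eliminate (the authors cite Mackey's local compactness of $E$ as ``neither easy nor direct'' and as a motivation for the new theory).

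The paper's route avoids both problems by never invoking Mackey. It \emph{declares} the topology on $E=A\times G$: the product topology on a tube $U_A\times U_G$ is taken as a neighbourhood base $\mathcal B$ at $e$, and the base at $x$ is $x\mathcal B$, so left translations are continuous and the local section $g\mapsto(0,g)$ is continuous by fiat --- local splitness is then automatic rather than something to be extracted from an abstractly given topology. By Bourbaki's criterion, $E$ is a topological group provided inner conjugation by each $x\in E$ is continuous at $e$ (Theorem 5 of the paper). This is proved by constructing a left-invariant integral on $E$ from scratch (a Haar-measure construction \`a la Fell--Doran, needing only a bespoke uniform-continuity lemma since $E$ is not yet known to be a group), a Lindel\"of covering argument showing $\mu(\imath_x^{-1}(W))>0$ for $W$ open, and the continuity of $u(x)=\mu(M\cap xM)$ for suitable symmetric $M$, whence $MM\supseteq$ a neighbourhood of $e$ and $\imath_x$ is continuous at $e$. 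Local compactness of $E$ then falls out as a corollary instead of being an input. If you want to salvage your approach, you would have to prove your local uniqueness lemma by essentially this same measure-theoretic argument, at which point the detour through Mackey's theorem buys nothing.
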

It is this theorem that confirms our expectation that these locally regular cohomology theories can be a good and potentially useful cohomology theory for topological groups.  Our other attempts to constuct suitable cohomology theories failed to give a suitable interpretation for the second cohomology group. 

The proof of this theorem is a bit delicate.  Given a locally continous measurable $2$-cocycle, 
we construct an abstract extension group $E$. We topologize $E$ by first defining 
the product  topology in a sufficiently small `tubular' neighbourhood of identity in $E$, 
and by imposing the condition that left translations are continuous. To conclude that $E$ is a 
topological group, we need to show that inner conjugation by any element of $E$ 
is continuous at identity. For this,  we follow the
 idea of proof of Banach's theorem that a measurable homomorphism of locally
compact second countable groups is continuous. We find that the proof of Banach's theorem 
extends perfectly to prove that inner conjugations are continuous.  

In  the smooth category, we have
the following analogue of  Theorem \ref{lcm}. 
\begin{theorem}
Let  $G$ be a Lie group, $A$ be a smooth
$G$-module. The second cohomology groups, $H^2_{lsm}(G,A)$
parametrizes all the locally split smooth extensions
of $G$ by $A$.
\end{theorem}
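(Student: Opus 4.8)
The proof will follow the pattern of Theorem~\ref{lcm}, adding two ingredients: the solution to Hilbert's fifth problem, which upgrades the extension from a topological group to a Lie group, and an automatic-smoothness lemma that plays the role Banach's theorem plays in the locally compact setting. Throughout, recall that a Lie group is locally compact and second countable, and that a smooth $G$-module is in particular an abelian Lie group with these properties, so Theorem~\ref{lcm} and Mackey's theorem on measurable cross-sections are available.

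\emph{From cocycles to extensions.} Given a locally smooth measurable $2$-cocycle $f\colon G^{2}\to A$, regard it as a locally continuous measurable cocycle. Theorem~\ref{lcm} then makes $E_{f}:=A\times G$, with the twisted multiplication $(a,g)(b,h)=(a+g\cdot b+f(g,h),gh)$ and the topology generated by the left translates of the product topology on a small tube $T\subseteq E_{f}$ about the identity, into a locally split topological extension $1\to A\overset{\imath}{\to}E_{f}\overset{\pi}{\to}G\to 1$. Because $f$ is smooth near $(e,e)$, the tube $T$ is homeomorphic to an open subset of the manifold $A\times G$; thus $E_{f}$ is a locally Euclidean topological group, and by the solution to Hilbert's fifth problem (Gleason, Montgomery--Zippin) it carries a necessarily unique Lie group structure inducing its topology. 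With respect to this structure $\imath$ and $\pi$ are continuous homomorphisms of Lie groups, hence smooth, and $\pi$ is a surjective submersion; so $1\to A\to E_{f}\to G\to 1$ is a smooth extension that splits smoothly over a neighbourhood of $e$. If $f$ is replaced by a cohomologous cocycle $f+d\mu$ with $\mu\in C^{1}_{lsm}(G,A)$, the standard isomorphism $(a,g)\mapsto(a+\mu(g),g)$ of $E_{f}$ onto $E_{f+d\mu}$ is a homomorphism that is smooth near the identity, hence smooth everywhere, hence an isomorphism of smooth extensions. This defines a map $\Phi$ from $H^{2}_{lsm}(G,A)$ to the set of isomorphism classes of locally split smooth extensions of $G$ by $A$.

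\emph{From extensions to cocycles.} Conversely, let $1\to A\overset{\imath}{\to}E\overset{\pi}{\to}G\to 1$ be a locally split smooth extension. Choose a smooth section $\sigma$ of $\pi$ over a neighbourhood of $e$ in $G$ and, by Mackey's theorem, extend it to a measurable section $\tilde{\sigma}\colon G\to E$. The function $f_{\sigma}(g,h)=\imath^{-1}\!\left(\tilde{\sigma}(g)\,\tilde{\sigma}(h)\,\tilde{\sigma}(gh)^{-1}\right)$ is a measurable $2$-cocycle which, near $(e,e)$, is a composition of smooth maps (as $\imath$ is a smooth closed embedding and $\sigma$ is smooth there), hence lies in $C^{2}_{lsm}(G,A)$. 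Replacing $\sigma$ by another smooth local section changes $f_{\sigma}$ by the coboundary of $\imath^{-1}(\sigma_{1}\sigma_{2}^{-1})$, which is smooth near $e$; replacing the measurable extension changes it by the coboundary of a measurable cochain that vanishes near $e$. In either case the two cocycles differ by the coboundary of a cochain in $C^{1}_{lsm}(G,A)$, so $[f_{\sigma}]\in H^{2}_{lsm}(G,A)$ depends only on $E$, and we obtain a map $\Psi$.

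\emph{The two maps are inverse; the main obstacle.} That $\Phi\circ\Psi$ is the identity follows because $(a,g)\mapsto\imath(a)\,\tilde{\sigma}(g)$ is a bijective homomorphism $E_{f_{\sigma}}\to E$ that is smooth near the identity and whose differential at $e$ is a linear isomorphism (source and target have equal dimension and the homomorphism is injective); hence it is a local diffeomorphism at $e$, so an isomorphism of Lie groups respecting the extension data. The delicate direction is $\Psi\circ\Phi=\mathrm{id}$. Computing $\Psi(E_{f})$ with a smooth section $\sigma(g)=(\alpha(g),g)$ of $\pi\colon E_{f}\to G$, the twisted multiplication gives $f_{\sigma}=f+d\alpha$, where $\alpha\colon V\to A$ is, a priori, only continuous near $e$ with $\alpha(e)=0$, while $d\alpha=f_{\sigma}-f$ is smooth near $(e,e)$. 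Everything therefore reduces to the following lemma, which I expect to be the main work. \textbf{Lemma.} If $\gamma\colon V\to A$ is continuous on a neighbourhood of $e$, $\gamma(e)=0$, and its coboundary $(g,h)\mapsto g\cdot\gamma(h)-\gamma(gh)+\gamma(g)$ is smooth near $(e,e)$, then $\gamma$ is smooth on a neighbourhood of $e$. To prove it I would smooth $\gamma$: fix a smooth $\phi\geq 0$ supported near $e$ with $\int_{G}\phi\,d\mu_{G}=1$. The convolution $\gamma_{\phi}(g)=\int_{G}\phi(h)\,\gamma(h^{-1}g)\,d\mu_{G}(h)$ is smooth near $e$; substituting $\gamma(h^{-1}g)=h^{-1}\!\cdot\gamma(g)+\gamma(h^{-1})-(d\gamma)(h^{-1},g)$ yields $\gamma_{\phi}(g)=T_{\phi}(\gamma(g))+c+r(g)$, where $c$ is a constant, $r$ is smooth near $e$ (differentiate under the integral sign, using the smoothness of $d\gamma$ near $(e,e)$), and $T_{\phi}$ is the averaged linearised action of $G$ on $\mathrm{Lie}(A)$. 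Since $T_{\phi}\to\mathrm{id}$ as $\phi$ concentrates at $e$, $T_{\phi}$ is invertible for a suitable choice of $\phi$, whence $\gamma=T_{\phi}^{-1}(\gamma_{\phi}-c-r)$ is smooth near $e$. Applying the lemma to $\alpha$ shows $\alpha\in C^{1}_{lsm}(G,A)$, hence $[f_{\sigma}]=[f]$; so $\Phi$ and $\Psi$ are inverse bijections, and one verifies in the usual way that this bijection is additive for the Baer sum.
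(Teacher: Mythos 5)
Your proposal reaches the right statement but by a genuinely different route from the paper's. The paper does not invoke Hilbert's fifth problem here: given a locally smooth cocycle $F$ it builds the smooth structure on $E$ directly, declaring the product charts on $\pi^{-1}(U_G)\simeq A\times U_G$ and their left translates to be an atlas, checking smoothness of the transition maps from the local smoothness of $F$ (via the smoothness of $(s_1,s_2,s_3)\mapsto F(s_1s_2,s_3)$ and $(s_1,s_2,s_3)\mapsto F(s_1,s_2s_3)$), and then propagating smoothness of multiplication, inversion and conjugation from $\pi^{-1}(U_G)$ to $E^0$ using that $U_G$ generates $G^0$; the non-identity components are handled by Banach's theorem (conjugation by any $x\in E$ is measurable, hence continuous, hence --- by the closed-subgroup argument --- smooth on $E^0$). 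The decisive payoff of that construction is that the tautological section $g\mapsto(0,g)$ is smooth near $e$ by construction, so recovering a cocycle from $E$ gives back $F$ up to a manifestly locally smooth coboundary, and the inverse-bijection step is immediate. Your detour through Theorem~\ref{lcm} plus Hilbert's fifth problem (which the paper reserves for the separate comparison theorem $H^2_{lsm}(G,A)\simeq H^2_{lcm}(G,A)$) produces a Lie structure on $E_f$ a priori unrelated to the product coordinates, which is exactly why you are forced into your Lemma; you have correctly located where the extra work then lies.

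The Lemma is true and convolution is the right tool, but as written its proof has a gap: $A$ is only an abelian Lie group, so the $A$-valued integrals $\int\phi(h)\gamma(h^{-1})\,d\mu_G(h)$ and $\int\phi(h)\,h^{-1}\!\cdot\gamma(g)\,d\mu_G(h)$, and the operator $T_\phi$ applied to the \emph{values} of $\gamma$, do not make sense when $A$ is, say, a torus. You must first lift: since $\gamma$ is continuous with $\gamma(e)=0$, it takes values near $0\in A$ on a small neighbourhood and so lifts through $\exp_A$ to a continuous $\mathrm{Lie}(A)$-valued $\tilde\gamma$; because $A$ is abelian, $\exp_A$ is a $G$-equivariant local isomorphism of groups, so the identity $\gamma(h^{-1}g)=h^{-1}\!\cdot\gamma(g)+\gamma(h^{-1})-(d\gamma)(h^{-1},g)$ lifts to $\mathrm{Lie}(A)$ up to a continuous error with values in the discrete group $\ker\exp_A$, which is locally constant and vanishes at $(e,e)$, hence vanishes near $(e,e)$. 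After this reduction your argument (smoothness of $\gamma_\phi$ via the substitution $k=h^{-1}g$ and differentiation under the integral sign, invertibility of $T_\phi=\int\phi(h)\rho(h^{-1})\,d\mu_G(h)$ on the finite-dimensional space $\mathrm{Lie}(A)$ for $\phi$ concentrated near $e$) goes through, and with the Lemma in place the rest of your scheme is sound --- indeed, once the Lemma shows the tautological coordinates are smooth charts, your verifications of $\Phi\circ\Psi=\mathrm{id}$ and of well-definedness also become unproblematic.
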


Further, as a consequence of the positive solution to Hilbert's fifth problem, 
we have a comparison theorem as follows:
\begin{theorem}
Let  $G$ be a Lie group, $A$ be a smooth
$G$-module. Then the natural map, 
\[H^2_{lsm}(G,A)\to H^2_{lcm}(G,A)\]
is an isomorphism. 
\end{theorem}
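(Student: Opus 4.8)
The plan is to deduce the comparison isomorphism from the two preceding structural theorems, which identify both $H^2_{lsm}(G,A)$ and $H^2_{lcm}(G,A)$ with sets of isomorphism classes of extensions. Concretely, $H^2_{lsm}(G,A)$ parametrizes locally split smooth extensions of $G$ by $A$, while $H^2_{lcm}(G,A)$ parametrizes locally split topological extensions. The natural map sends the class of a locally smooth cocycle to the class of the same cocycle viewed merely as locally continuous, which on the extension side corresponds to the forgetful assignment sending a smooth extension to its underlying topological extension. So it suffices to show this forgetful map on extension classes is a bijection: every locally split topological extension of a Lie group $G$ by a smooth $G$-module $A$ admits a compatible smooth structure making it a smooth extension, and this smooth structure is unique up to isomorphism of extensions.

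**Surjectivity** is where Hilbert's fifth problem enters. Given a locally split topological extension $1\to A\to E\to G\to 1$ with $G$ a Lie group and $A$ a Lie group, $E$ is a locally compact group (this was noted in the introduction, via Mackey, once one knows $G,A$ locally compact second countable — a Lie group and a smooth $G$-module in the relevant sense are such). I would first argue $E$ has no small subgroups: a small subgroup of $E$ would project to a small subgroup of $G$, hence lie in $A$ near identity, but $A$ is a Lie group and has no small subgroups, so $E$ is a Lie group by Gleason--Yamabe. Then $A\to E$ and $E\to G$ are continuous homomorphisms of Lie groups, hence automatically smooth, so $E$ is a smooth extension. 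Finally the local continuous section can be upgraded: by Mostert's theorem (cited in the introduction) any short exact sequence of finite-dimensional locally compact groups is locally split by a continuous section, and in the Lie category one can arrange the section to be smooth near identity — alternatively, once $E$ is a Lie group, local smoothness of a section follows from the implicit function theorem applied to $\pi$, which is a submersion. Hence the class lies in the image of $H^2_{lsm}$.

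**Injectivity** reduces to showing that if two locally smooth $2$-cocycles $c_1,c_2$ become cohomologous in $C^2_{lcm}$, i.e. differ by the coboundary of a locally continuous $1$-cochain $b\colon G\to A$, then they differ by the coboundary of a locally smooth $1$-cochain. Equivalently, I want: an isomorphism of the associated topological extensions that is locally continuous can be taken to be locally smooth. This again follows because any such isomorphism $E_1\to E_2$ is a continuous isomorphism of Lie groups (both $E_i$ being Lie groups by the argument above), hence smooth; restricting, the cochain $b$ differs from a locally smooth cochain by something whose coboundary vanishes, and one checks the ambiguity is absorbed. Alternatively, and more cleanly at the cocycle level: the trivialization $b$ of $c_1-c_2$ is measurable and continuous near $1$, and $c_1-c_2$ is smooth near $1$; I would show $b$ is forced to be smooth near $1$ by examining the relation $(c_1-c_2)(s,t)=b(s)+s\cdot b(t)-b(st)$ — solving for $b(st)$ and using smoothness of the action and of $c_1-c_2$ near the identity, together with a standard "smoothing along a curve" or local-section argument, to propagate regularity.

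**The main obstacle** I anticipate is the surjectivity step's claim that $E$ is a Lie group: one must be careful that the hypotheses genuinely place $E$ in the scope of the Gleason--Yamabe solution to Hilbert's fifth problem (local compactness, second countability, and especially the no-small-subgroups property), and that the resulting smooth structure is the "right" one, i.e. compatible with the given topological extension structure and with the smooth $G$-action on $A$. The injectivity step, by contrast, is largely a regularity-propagation argument of the same flavor as the $H^1$ statements already established in the excerpt, so I expect it to be routine once the Lie group structure on the extensions is in hand.
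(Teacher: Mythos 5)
Your proposal is correct and follows essentially the same route as the paper: surjectivity via the solution to Hilbert's fifth problem upgrading the locally split topological extension to a Lie group and then extracting a smooth local section by the implicit function theorem, and injectivity via automatic smoothness of the resulting continuous isomorphism of Lie groups. The only cosmetic difference is that you enter Hilbert's fifth problem through the no-small-subgroups criterion, whereas the paper observes directly that local splitness makes the identity component of $E$ locally Euclidean; both are valid.
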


The locally smooth measurable cohomology groups can be related to Lie algebra cohomology. 
This is easily done via the cohomology theory  $H_\square(G,A)$ 
based on germs of smooth cochains defined in a 
neighbourhood of identity developed by Swierczkowski (\cite[page 477]{swierczkowski}). 
We have a restriction map,
\[ H^n_{lsm}(G,A)\to H_\square^n(G,A), \]
given by restricting a locally smooth measurable cochain to a neighbourhood of 
identity in $G^n$ where it is smooth.

Now suppose $G$ acts on a finite dimensional real vector space $V$.
Let $L$ denotes the Lie algebra associated to the Lie group $G$ and let 
$H(L,V)$ denotes the Lie algebra cohomology.
It has been proved ({\it cf.} \cite[Theorem 2]{swierczkowski}) that 
\[H_\square(G,V)\simeq H(L,V).\]

\section{Continuous and measurable cohomology theories}
We briefly recall the earlier constructions of  $\check{\textnormal C}$ech
type cohomology theories of topological groups and their modules.  
Let $G$ be a topological group and $A$ be an abelian topological group. Assume that there is an action of $G$ on $A$. We say $A$ is a topological $G$-module if the group action,
\[ G\times A\to A,\]
 is continuous. 

Suppose $A', ~A, ~A''$ are topological $G$-modules and form a short exact sequence of abelian groups: 
\[0\rightarrow A'\overset\imath\rightarrow A\overset\jmath\rightarrow A''\rightarrow 0,\]
We say it is a short exact sequence of topological $G$-modules, if $\imath, ~\jmath$ are continuous, 
$\imath$ is closed, $\jmath$ is open  and there is an isomorphism of topological groups, 
\[ A/\imath(A')\simeq A''.\]

\subsection{Cohomology theory based on continuous cochains} 
\begin{defn}
\label{cochaincomplex}
Define the continuous cohomology $H^*_{cont}(G,A)$
as the cohomology  of the cochain complex $\lbrace C^n_{cont}(G,A),d^n\rbrace_{n\geq 0}$
given as:
\begin{enumerate}
\item
$C^0_{cont}(G,A)=A$
\item
For $n\geq 0$, $C^n_{cont}(G,A)$  denotes the group of all 
continuous maps from the product $G^n$ to $A$.
\end{enumerate}
The coboundary formula is given by the standard definition. For any $f\in C^n_{cont}(G,A)$, 
\begin{align*}
d^nf(s_1,s_2,\ldots,s_{n+1})=&
s_1\cdot f(s_2,\cdots,s_{n+1})+\underset{ i=1}{\overset{n}\sum}
(-1)^if(s_1,\ldots, s_is_{i+1},\ldots ,s_{n+1})\\
&+(-1)^{n+1}f(s_1,s_2,\ldots , s_n),
\end{align*}
\end{defn}

This theory coincides with the abstract cohomology theory, when the group is finite. 
When $G$ is profinite acting on a discrete abelian group, this cohomology theory is 
compatible with direct limits.  This theory was introduced 
by van Est ({\it cf.} \cite{van Est}).

\begin{example}\label{s1z}
Suppose  $G$ is a connected topological group and $A$ is  a discrete
$G$-module. Then it can be seen that  $G$ acts trivially on
$A$. The continuous cochains are just constant maps, and  
from the definition of the coboundary map, it can be seen that all the  
higher cohomology groups vanish. 

Consider the short exact sequence of trivial
  $G=S^1$-modules, 
\[ 0\ra \mathbb Z\ra \mathbb R\overset\pi\ra S^1\ra 1. \]

From the above remark, and the fact that there are no continuous
homomorphisms from $S^1$ to $ \mathbb R$, we see that,
\[H^1_{cont}(S^1,  \mathbb Z)=H^1_{cont}(S^1,  \mathbb R)=H^2_{cont}(S^1,  \mathbb Z)=0, \]
whereas 
\[H^1_{cont}(S^1, S^1)=\mbox{Hom}_{cont}(S^1, S^1)\simeq \mathbb Z.\] 
Hence this theory does not have long exact sequences of cohomology
groups even for the exponential sequence as above. 

Further, the second cohomology group
$H^2_{cont}(S^1,\mathbb Z)=(0)$. Thus, the second
continuous cohomology  does not account for even the natural exponential exact sequence 
given above. 
\end{example}

\subsection{Moore cohomology theory} 
The problem with continuous cohomology theory arises from the fact
that continuous cross sections do not exist in general. However,  we
have the following theorem guaranteeing the existence of measurable
sections: 
\begin{theorem}[Mackey, Dixmier]
Let $G$ be a polish group and $H$ be  a closed subgroup of $G$. Then
$H$ is a polish group and the projection map $G\to G/H$ 
 admits a measurable cross section.
\end{theorem}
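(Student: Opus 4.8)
The plan is to treat the two assertions separately, since the first is soft and the second carries all the content.

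\emph{$H$ is Polish.} Since $G$ is Polish it carries a complete compatible metric $d$ together with a countable dense subset. First I would observe that the restriction $d|_{H\times H}$ is a compatible metric on $H$, that it is complete because a closed subset of a complete metric space is itself complete, and that $H$ inherits separability from $G$. As multiplication and inversion on $H$ are the restrictions of the corresponding continuous maps on $G$, the subspace $H$ is a topological group, and hence a Polish group. This step is routine.

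\emph{The measurable section.} For the second assertion I would realize the section as a measurable selection of the coset multifunction and invoke the Kuratowski--Ryll-Nardzewski selection theorem. Concretely, let $\pi\colon G\to G/H$ be the quotient map and define $F\colon G/H\to \mathcal{F}(G)$ by $F(c)=\pi^{-1}(c)$, the fibre over $c$. The two facts I need are that each value $F(c)$ is a nonempty closed subset of $G$, and that $F$ is lower measurable. Closedness of the fibres follows because a fibre is a coset $gH$, and left translation by $g$ is a homeomorphism carrying the closed set $H$ onto $gH$. For lower measurability I would first record that $\pi$ is an open map: for open $U\subseteq G$ one has $\pi^{-1}(\pi(U))=UH=\bigcup_{h\in H}Uh$, which is open, so $\pi(U)$ is open in the quotient topology. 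Then, for any open $U\subseteq G$, the set $\{c\in G/H : F(c)\cap U\neq\emptyset\}=\pi(U)$ is open, hence Borel in $G/H$. With $G$ Polish as the target and $G/H$ equipped with its Borel $\sigma$-algebra as the domain, the hypotheses of Kuratowski--Ryll-Nardzewski are met, and the theorem furnishes a Borel selection $s\colon G/H\to G$ with $s(c)\in F(c)$ for every $c$; that is, $\pi\circ s=\mathrm{id}_{G/H}$. A Borel map is measurable with respect to the completion of any $\sigma$-finite Borel measure, in particular with respect to the Haar structure used in Moore's theory, so $s$ is the required measurable cross section.

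\emph{Main obstacle.} The crux is the lower-measurability verification, which I have reduced to the openness of $\pi$; once that is in hand the selection theorem does the rest, and notably it does not require $G/H$ to be completely metrizable, only to carry a $\sigma$-algebra. If instead one prefers the elementary Mackey--Dixmier construction, avoiding the selection theorem, the work shifts to building a Borel transversal directly: fix a refining sequence of countable Borel partitions of $G$ into sets of diameter tending to $0$ for the complete metric $d$, and select in each coset the limit point determined by always passing to the least-indexed cell that the coset still meets. The delicate point there is to show that this limiting procedure picks out a single well-defined point of each coset and that the resulting transversal is Borel; completeness of $d$ is exactly what forces the nested cells to shrink to a point, and this is where the argument must be handled with care.
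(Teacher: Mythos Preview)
The paper does not prove this theorem at all: it is quoted as a known result attributed to Mackey and Dixmier (with a reference to \cite{dixmier}) and is used as input to justify Moore's construction of measurable cohomology. So there is no ``paper's own proof'' to compare against.

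That said, your argument is sound. The first part is indeed routine once you observe that separability passes to subspaces of metric spaces (via second countability) and that closed subsets of complete metric spaces are complete. For the section, your reduction to the Kuratowski--Ryll-Nardzewski selection theorem is clean and correct: the key observation that $\{c\in G/H: F(c)\cap U\neq\emptyset\}=\pi(U)$ is open because $\pi$ is open is exactly what makes lower measurability immediate, and KRN only requires a measurable space on the domain side, so you do not need to worry about whether $G/H$ is Polish. Your sketch of the alternative direct construction via a refining sequence of partitions is also the classical route and you have correctly identified completeness of the metric as the point where the nested-cell argument converges. Either approach would serve; the selection-theorem route is the more efficient packaging.
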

We recall that a topological group $G$ is said to be polish if its
 topology admits a complete separable metric. 

Define  the group  $C^n_m(G,A)$ of measurable $n$-cochains with values in a
topological $G$-module $A$ to be the space  of all
measurable maps from $G^n$ to $A$. With  the coboundary map  as before, 
   we obtain a cochain complex $\lbrace C^n_m(G,A),d^n\rbrace$. 
We define the measurable or the Moore cohomology groups $H^n_m(G,A)$ are as 
cohomology groups $\lbrace C^n_m(G,A), d^n\rbrace$.

The measurable cohomology groups have many of the nice properties
required for a cohomology theory, viz., long exact
sequences of cohomology groups for any short exact sequence of
$G$-modules, correct interpretation of the low rank cohomology groups,
comparison with continuous cohomology, a form of Shapiro's lemma, weak
forms of Hochschild-Serre spectral sequences, etc. 

One of the difficult aspects of  Moore's theory, is that 
even if we restrict  ourselves to the category of topological
groups and continuous $G$-modules, to compute the cohomology, we
need  to work in the  measurable category. The measurable cochain
groups are large and it is difficult to compute them. 
For instance, the construction of the extension 
group corresponding to a measurable $2$-cocycle is
 not direct, and uses the topology on induced modules.

\subsection{Induced modules and extension groups}
Suppose  that $G, ~A$ are second countable,
 locally compact topological groups and $A$ is a topological $G$-module. 
Denote by $I(A)$ the group of measurable 
maps from $G$ to $A$. 

Since $A$ is Polish, there exists a metric  $\rho$ on $A$ whose
 underlying topology is the same as the original topology on $A$. 
Further, we can assume that $\rho$ is bounded. 
We take a finite measure $d\nu$ on $G$,
which is equivalent to the Haar measure on $G$ 
({\it cf.} \cite{moore3}). Define a metric on $I(A)$ as follows 
\[\bar\rho (f_1,f_2)=\underset G\int \rho(f_1(x),f_2(x))d\nu(x)\]
This makes $I(A)$  a Polish group. 
 We define a $G$-action on $I(A)$  by, 
\[(s\cdot f)(t)=sf(s^{-1}t)\quad \forall ~s,~t\in G, ~f\in U(G,A).\]
Via this action $I(A)$ becomes a topological $G$-module and  $A$ embeds as
 a $G$-submodule of $I(A)$ as submodule of the constant maps. 
It can be seen that the higher measurable cohomology groups 
of $I(A)$ are trivial ({\it cf.} \cite{moore3}).  We have a short exact sequence, 
\[ 0\to A\to  I(A)\to U(A)\to 0.\]

Moore showed that the second measurable 
cohomology group parametrizes the collection of extensions:
\begin{prop}
Suppose a Polish group $G$
acts continuously on an abelian Polish  group $A$.
Then $H^2_m(G, A)$ parametrizes the isomorphism classes of 
topological extensions of $G$ by $A$, 
\[1\rightarrow A\rightarrow E\rightarrow G\rightarrow 1.\]
\end{prop}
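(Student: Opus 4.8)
The plan is to define mutually inverse maps between $H^2_m(G,A)$ and the set $\mathcal{E}(G,A)$ of isomorphism classes of topological extensions of $G$ by $A$, and to verify that each is well defined. Starting from an extension $1 \to A \overset{\imath}{\to} E \overset{\pi}{\to} G \to 1$, I would first observe that since $\imath(A)$ is closed in $E$ and both $A$ and $G \cong E/\imath(A)$ are Polish, the group $E$ is itself Polish; hence by the Mackey--Dixmier theorem the surjection $\pi$ admits a measurable section $s \colon G \to E$, which I normalize so that $s(e) = e$. The function $f(g_1, g_2) := s(g_1)\, s(g_2)\, s(g_1 g_2)^{-1}$ takes values in $\imath(A)$, is measurable because $s$ is measurable and the group operations are continuous, and satisfies the $2$-cocycle identity by the usual associativity computation; moreover conjugation by $s(g)$ recovers the given $G$-module structure on $A$. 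Replacing $s$ by another measurable section multiplies it by a measurable map $G \to A$ and hence alters $f$ by a coboundary, so the class $[f] \in H^2_m(G,A)$ depends only on the isomorphism class of $E$.

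For the reverse direction, given a cocycle $f \in Z^2_m(G,A)$ I would form the abstract group $E_f = A \times G$ with the twisted multiplication $(a_1, g_1)(a_2, g_2) = (a_1 + g_1\cdot a_2 + f(g_1,g_2),\, g_1 g_2)$; here the cocycle condition is precisely associativity, and conjugation induces the correct action. The difficulty, as flagged in the introduction, is to topologize $E_f$ so that it becomes a topological extension. Here I would use the induced module $I(A)$ together with the vanishing of its higher measurable cohomology. Pushing $f$ forward along the embedding $\iota \colon A \hookrightarrow I(A)$ as constant maps yields a cocycle in $Z^2_m(G, I(A))$; since $H^2_m(G, I(A)) = 0$, there exists a measurable $1$-cochain $h \colon G \to I(A)$ with $\iota \circ f = dh$. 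The map $\psi \colon E_f \to I(A) \rtimes G$, $(a,g) \mapsto (\iota(a) + h(g),\, g)$, is then an injective group homomorphism into the split extension $I(A)\rtimes G$, which carries the product topology and is a genuine topological group because the $G$-action on $I(A)$ is continuous. I would give $E_f$ the topology induced by $\psi$.

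It then remains to verify that $E_f$, so topologized, is a topological extension: that it is a topological group (immediate, being a subgroup of one), that $\imath \colon A \to E_f$ is a closed topological embedding (because $\iota(A)$ is closed in $I(A)$ and $\iota$ is a homeomorphism onto its image), and that $\pi \colon E_f \to G$ is a continuous open surjection with kernel $A$. Continuity of $\pi$ is clear since it is the restriction of the projection $I(A)\rtimes G \to G$; for openness I would establish that $E_f$ is Polish and invoke the open mapping theorem, by which a continuous surjective homomorphism of Polish groups is automatically open. Finally I would check that the two constructions are mutually inverse: changing $f$ by a coboundary $dk$ (with $k \colon G \to A$ measurable) produces an isomorphic extension via $(a,g) \mapsto (a + k(g), g)$, while reconstructing a cocycle from $E_f$ through the measurable section $g \mapsto (0,g)$ returns $f$.

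The main obstacle is exactly the topologization step and the verification that $\pi$ is open. Because the cochain $h$ is only measurable, the image $\psi(E_f)$ is a Borel---rather than closed---subgroup of $I(A)\rtimes G$, so it is not transparent that the induced topology is Polish or that $E_f \to G$ is open; this is precisely where Moore's machinery of induced modules, together with descriptive set-theoretic facts about Borel subgroups of Polish groups and the open mapping theorem, must be deployed with care. Once $E_f$ is known to be a Polish topological group with $A$ closed and quotient $G$, the remaining compatibilities and the resulting bijection follow by routine bookkeeping.
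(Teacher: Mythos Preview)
Your construction is essentially the paper's: both embed the abstract extension into $I(A)\rtimes G$ via a cochain $h$ with $dh=\iota\circ f$. What you are missing is the one observation that dissolves the obstacle you flag at the end. The paper passes to the quotient $U(A)=I(A)/A$: the image $\bar h\colon G\to U(A)$ of $h$ satisfies $d\bar h=0$, so $\bar h$ is a \emph{measurable crossed homomorphism}, equivalently $g\mapsto(\bar h(g),g)$ is a measurable group homomorphism $G\to U(A)\rtimes G$. By automatic continuity for Polish groups this map is continuous, and its image $\mathcal E$ is the graph of the continuous map $\bar h$, hence closed. Now $\psi(E_f)$ is exactly the preimage of $\mathcal E$ under the continuous projection $I(A)\rtimes G\to U(A)\rtimes G$, so $\psi(E_f)$ is \emph{closed}, not merely Borel, and therefore Polish.

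Without this step your residual worry is a real gap, not a technicality: Borel subgroups of Polish groups need not be Polish in the subspace topology (think of $\mathbb Q\subset\mathbb R$), so neither the open mapping theorem nor unspecified ``descriptive set-theoretic facts about Borel subgroups'' will finish the argument as written. Once you observe that $\psi(E_f)$ is closed, however, everything else you outline---Polishness of $E_f$, openness of $\pi$ via the open mapping theorem, and the mutual-inverse bookkeeping---goes through exactly as you say.
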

\begin{proof}
Given a topological extension
$E$ of $G$ by $A$, the construction of the $2$-cocycle
 corresponding to the extension is done (as before) using the existence of
 the measurable cross-section guarenteed by the theorem of Dixmier-Mackey.  

For our purpose, we briefly recall the proof of the converse. From the short exact sequence, and the
cohomological triviality of $I(A)$, we obtain an isomorphism, 
\[ H^2_m(G, A)\simeq H^1_m(G, U(A)).\]
Corresponding to a $2$-cocycle  $ b\in Z^2_m(G,A)$, we obtain  
a measurable crossed homomorphism, i.e., a continuous homomorphism 
 $T: ~G\to \colon U(A)\rtimes G$.  Let ${\mathcal E}$ be the image of $T(G)$. We have a short exact sequence, 
 \[ 0\to A\to  I(A)\rtimes G\to U(A)\rtimes G\to 0.\]
The required extension group $E$ is obtained as the inverse image of ${\mathcal E}$ in $I(A)\rtimes G$. 
We equip the group $E$  with  subspace topology of $I(A)\rtimes G$. It
can be verified  that $E$ is a closed subgroup of $I(A)\rtimes G$, 
and hence it is a Polish group. 
\end{proof}

\begin{remark}
From this construction, it does not seem possible to directly relate 
the topology of $E$ to that of $G$ and $A$; for example, if 
$G$ and $A$ are locally compact, will $E$ be
locally compact? This question was already answered
in the affirmative by Mackey ({\it cf.} \cite{mackey}), but the proof is neither 
easy nor direct. 

A similar problem arises when we work
with Lie groups and we want to relate the manifold structure on $E$ to
that of $G$ and $A$. This provides us another motivation (apart from the work of Lichtenbaum) 
 for  the construction of 
a cohomology theory based on measurable cochains which are
 continuous (or more generally are regular in a suitable sense) in a neighbourhood of identity. 
\end{remark}

\section{Locally continuous measurable cohomology theory}\label{chapter-false-tate}
We now describe our construction of a cohomology theory for topological groups, 
based on measurable cochains which are continuous in a 
neighbourhood of identity. We first establish  basic properties of this
 cohomology theory. Our main  aim is to show that  the
 second cohomology group describes the equivalence
 classes of topological extensions that are locally split. The
 construction of the extension allows us to directly relate the 
topology on the extension group to that of the subgroup and the quotient group.

\subsection{Definition and basic functorial properties}
We define the group $C^n_{lcm}(G,A)$ of locally continuous measurable 
$n$-cochains on $G$ with values in $A$ to be the space of measurable
 functions from $G^n$ to $A$  which are continuous in some neighbourhood 
of identity in $G^n$. The standard coboundary map involves the group action,
group multiplication on $G$, and composition in $A$. 
\[(s_1,s_2, \cdots s_{n+1})=s_1\cdot (s_2,s_3,\cdots  s_{n+1}),\]
\[(s_1,\cdots s_i,s_{i+1},\cdots s_{n+1})\mapsto (s_1,\cdots s_is_{i+1},\cdots s_{n+1}).\]
All these maps are continuous, and hence $d^n$ in Definition \ref{cochaincomplex}
 takes $C^n_{lcm}(G,A)$  to $C^{n+1}_{lcm}(G,A)$.
Therefore, $\lbrace C^\bullet_{lcm}(G,A), d^\bullet \rbrace_{n\geq 0}$ forms a
cochain complex.
For  every $ n\geq 0$, we denote the group of $n$-cochains by 
\[Z^n_{lcm}(G,A)=\lbrace f\colon G^n\ra A\colon\quad 
d^nf(s_1,s_2,\cdots ,s_{n+1})=0 \rbrace . \]
and we denote the group of $n$-coboundaries by
\[B^n_{lcm}(G,A)=\textnormal{Image }(d^{n-1})\textnormal{ for }n\geq 1.\]
We define $B^0_{lcm}(G,A)=0$. 
The locally continuous cohomology groups $H^n_{lcm}(G,A)$
for $n\geq 0 $ are defined as the  $n$-th cohomology
group of the cochain complex 
$\lbrace C^\bullet_{lcm}(G,A), d^\bullet\rbrace$, i.e., 
\[H^n_{lcm}(G,A)=\dfrac{Z^n_{lcm}(G,A)}{B^n_{lcm}(G,A)}.\]

It is clear that $H^0_{lcm}(G, A)=A^G$ the space of $G$-invariants in
$A$. These cohomology groups lie between the continuous cohomology and the
measurable cohomology, i.e., there are natural maps, 
\[ H^n_{cont}(G,A)\to H^n_{lcm}(G,A)\to H^n_{m}(G,A).\]
We will see that this cohomology theory addresses the problems related to
the cohomology theories discussed  earlier.
We first verify some of the basic properties satisfied by these
cohomology groups.
  
\subsection{Change of groups}
 Let $A,~A'$ be topological modules
for $G,G'$ respectively. Suppose there are continuous
homomorphism $\phi\colon G'\ra G$, $\psi\colon A\ra A'$ satisfying
 the following compatibility condition:
$$\begin{CD}
 G@.\times    @.   A   @>>> A\\
 @A{\phi}AA   @. @VV{\psi}V @VV{\psi}V\\
  {G'}@.\times@. {A'} @>>>{A'}
\end{CD}$$
\[g'\cdot\psi(a)=\psi(\phi(g')\cdot a)\quad\forall a\in A,~g'\in G'.\]
Then there is a map of cohomology groups, 
\[ H^n_{lcm}(G,A)\ra H^n_{lcm}(G',A'). \]
In particular, this gives functorial maps for  $G=G'$,
 \[ H^n_{lcm}(G,A)\ra H^n_{lcm}(G,A').\]
For $G'=H$, a subgroup of $G$, we have the
restriction homomorphism
\[ H^n_{lcm}(G,A)\ra H^n_{lcm}(H,A).\]

\subsection{Locally split short exact sequences}
\begin{defn} 
A short exact sequence of topological groups
 \[0\ra G'\overset\imath{\ra} G\overset\jmath{\ra} G''\ra 0\]
is an algebraically exact sequence of groups with the additional
property that $\imath$ is a closed and $\jmath$ is open. 
It is said to be  {\em locally split}, if the 
homomorphism $\jmath$ admits a local section, i.e., there 
exists an open neighbourhood $U''$ of identity in $G''$ and a
continuous map  $\sigma\colon U''\ra G$ such that
 $j\circ\sigma =id_{U''}$.
\end{defn}

We recall the definition of a finite dimensional topological space and topological group: 
\begin{defn}
 A topological space $X$ has
finite  topological dimension $k$,
 if every covering $\mathcal U$ of $X$ has a refinement $\mathcal U'$
 in which every point of $X$ occurs in at most $k+1$ sets 
in $\mathcal U'$, and $k$ is the smallest such integer. 
Finite dimensional topological groups
 are  topological groups that 
have finite dimension as a topological space.
\end{defn}

 The following theorem due to Mostert ({\it cf.} \cite[page 647]{mostert})
 provides examples of locally split short exact sequences: 
\begin{thmwn} 
Let $G$ be a finite dimensional locally compact group and
$H$ be a closed normal   subgroup of $G$, then $G/H$ admits  a local
cross-section.
\end{thmwn}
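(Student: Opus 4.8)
The plan is to reduce to Lie groups by the solution of Hilbert's fifth problem and then reassemble the section through the profinite directions. Since we only seek a section on a neighbourhood of the identity, I may first replace $G$ by an open subgroup: a local cross-section of the restricted quotient extends to one for $G\to G/H$, because an open subgroup meets the cosets in an open neighbourhood of the identity of $G/H$. By the Gleason--Yamabe structure theorem there is, inside any prescribed neighbourhood of $e$, a compact normal subgroup $K$ of (an open subgroup of) $G$ with $G/K$ a Lie group; and since $G$ is finite dimensional, the structure theory of finite dimensional locally compact groups lets me take $K$ totally disconnected, hence profinite. Because characteristic open subgroups form a neighbourhood basis of the identity in the profinite group $K$, and each characteristic subgroup of $K$ is normal in $G$, I obtain a decreasing chain of open subgroups $K=K_0\supseteq K_1\supseteq\cdots$, all normal in $G$, with $K/K_m$ finite and $\bigcap_m K_m=\{e\}$. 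Writing $G_m:=G/K_m$, each $G_m$ is an extension of the Lie group $G/K$ by the finite group $K/K_m$, hence itself a Lie group, and $G=\varprojlim_m G_m$ with surjective transition maps $f_m\colon G_{m+1}\to G_m$ of finite kernel.

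Let $H_m:=HK_m/K_m$ be the image of $H$ in $G_m$; it is a closed normal subgroup, $G_m/H_m=G/HK_m$ is again a Lie group, and since $H$ is closed one checks $\bigcap_m HK_m=H$, so that $G/H=\varprojlim_m G_m/H_m$. For each $m$ the quotient map $\pi_m\colon G_m\to G_m/H_m$ is a homomorphism of Lie groups, so by the observation recalled in the introduction it admits a continuous (indeed smooth) local section near the identity. The real task is therefore to choose these sections compatibly with the transition maps, so that they assemble in the inverse limit into a single continuous local section of $\pi\colon G\to G/H$.

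I would construct compatible sections $\sigma_m$ by induction on $m$. Given $\sigma_m$, a lift $\sigma_{m+1}$ over a point $y\in G_{m+1}/H_{m+1}$ is forced to lie in $\pi_{m+1}^{-1}(y)\cap f_m^{-1}\!\big(\sigma_m(\bar f_m(y))\big)$, where $\bar f_m$ is the induced transition map on quotients; surjectivity of $f_m$ on the relevant subgroups shows this set is nonempty, and it is a coset of a subgroup of the finite kernel $K_m/K_{m+1}$, hence finite. Thus $y\mapsto\sigma_{m+1}(y)$ is a continuous selection from a finite-sheeted covering-type correspondence, and over a simply connected base such a correspondence trivializes, so a continuous section exists. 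The main obstacle, and the delicate point of the argument, is to carry this out over a neighbourhood that does \emph{not} shrink to the identity as $m\to\infty$: I would fix once and for all a small ball $B$ in the bottom Lie group $G/HK$ and build every $\sigma_m$ over the preimage of $B$, exploiting that all finite coverings over the simply connected $B$ are trivial in order to keep the successive choices compatible. Passing to the inverse limit then yields $\sigma=\varprojlim_m\sigma_m$, a continuous section of $G\to G/H$ over the preimage of $B$, which is the desired local cross-section.
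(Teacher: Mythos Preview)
The paper does not actually prove this statement: it records only the one-line sketch ``obvious when $G$ is a Lie group; follows from the fact that any finite-dimensional locally compact group is an inverse limit of Lie groups'' and then defers to Mostert's original article. Your proposal is exactly a fleshed-out version of that same sketch, so the approaches coincide.

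One genuine slip, easily repaired: the assertion that ``characteristic open subgroups form a neighbourhood basis of the identity in the profinite group $K$'' is false in general. For instance, in $K=(\mathbb{Z}/2\mathbb{Z})^{\mathbb N}$ the topological automorphism group acts transitively on the open index-$2$ subgroups, whose intersection is trivial, so $K$ has no proper open characteristic subgroup at all. What you really need is a cofinal family of open subgroups of $K$ that are normal in $G$, and this is available after one further harmless reduction: replace $G$ by the open subgroup $G_1=\pi^{-1}\big((G/K)^0\big)$, the preimage of the identity component of the Lie group $G/K$. Then $G_1/K$ is connected, and for any open normal subgroup $L\trianglelefteq K$ the conjugation action of $G_1$ on the finite discrete set $K/L$ factors through the connected group $G_1/K$, hence is trivial; thus every such $L$ is automatically normal in $G_1$. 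With this adjustment your inductive lifting through the tower $G_m=G/K_m$ goes through as written, and the compatible sections assemble to the desired local cross-section in the inverse limit.
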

This theorem is obvious when $G$ is a Lie group, and it follows from
the fact that  that any finite dimensional locally compact group is 
an inverse limit of Lie groups. 
For detailed proof, refer to ({\it cf.} \cite{mostert}).
\begin{lemma}
\label{extendsection} 
Consider a locally split short exact sequence of topological groups 
 $1\ra G'\overset\imath{\ra} G\overset\jmath{\ra} G''\ra 1.$
   Then there exists a measurable section $\sigma: G''\to G$ which is
continuous in a neighbourhood of identity on $G''$. 
\end{lemma}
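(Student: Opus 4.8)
The plan is to glue the given continuous local section to a globally defined measurable section coming from the Mackey--Dixmier theorem. Since the sequence is locally split, fix an open neighbourhood $U''$ of the identity in $G''$ together with a continuous map $\sigma_0\colon U''\to G$ with $\jmath\circ\sigma_0=\mathrm{id}_{U''}$. After replacing $\sigma_0(x)$ by $\sigma_0(x)\sigma_0(e)^{-1}$ — which still lies over $x$ because $\sigma_0(e)\in\imath(G')=\ker\jmath$ — we may and do assume $\sigma_0(e_{G''})=e_G$, and by shrinking $U''$ we may assume $\sigma_0(U'')$ is contained in any prescribed neighbourhood of $e_G$.

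Next I would produce a measurable section defined on all of $G''$. The subgroup $\imath(G')$ is closed in the Polish group $G$ and $G/\imath(G')$ is topologically isomorphic to $G''$, so by the theorem of Mackey and Dixmier the quotient map $\jmath\colon G\to G''$ admits a measurable cross section $\tau\colon G''\to G$. (Alternatively, avoiding that theorem: as $G''$ is $\sigma$-compact it is covered by countably many translates $g_iU''$; choosing $\tilde g_i\in G$ over $g_i$ and setting $\tau(x)=\tilde g_i\,\sigma_0(g_i^{-1}x)$ for $x$ in the Borel set $g_iU''\setminus\bigcup_{j<i}g_jU''$ gives such a section directly, and with $g_1=e$, $\tilde g_1=e$ it already agrees with $\sigma_0$ on $U''$.)

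Finally, define $\sigma\colon G''\to G$ by $\sigma=\sigma_0$ on $U''$ and $\sigma=\tau$ on $G''\setminus U''$. Both pieces of the domain are Borel (indeed $U''$ is open), and $\sigma$ restricts to a measurable map on each, so $\sigma$ is measurable; since $\sigma_0$ and $\tau$ are sections of $\jmath$, so is $\sigma$; and $\sigma$ agrees with the continuous map $\sigma_0$ on the open neighbourhood $U''$ of the identity, hence is continuous there, as required. There is no serious obstacle in this argument; the only points that need a little attention are the measurability of the glued function — immediate from the fact that $U''$ and its complement are Borel — and verifying the hypotheses under which the Mackey--Dixmier theorem (or, in the alternative route, the extraction of a countable subcover) applies, namely that $G$ is Polish with $\imath(G')$ closed, equivalently that $G''$ is $\sigma$-compact, both of which hold in the setting of interest.
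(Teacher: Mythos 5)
Your proof is correct in the settings where the lemma is actually applied, but it is organized differently from the paper's. The paper runs a Zorn's-lemma maximality argument: it takes a maximal measurable subset $Z\subseteq G''$ carrying a section continuous near the identity, and if $Z\neq G''$ it enlarges $Z$ by patching in a translated copy of the local section around a point $w\notin Z$ --- the same translation device you use, but packaged so as to make no countability assumption on $G''$. Your second, ``alternative'' route (covering $G''$ by countably many translates $g_iU''$ and defining the section piece by piece on the Borel partition $g_iU''\setminus\bigcup_{j<i}g_jU''$) is exactly the simplification the paper itself records in Remark~\ref{lindel} for Lindel\"of groups, and it is arguably the cleaner argument: it avoids Zorn's lemma entirely and sidesteps the delicate point in the paper's proof that a union of an uncountable chain of measurable sets need not be measurable. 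Your first route, via the Mackey--Dixmier measurable cross-section followed by gluing with $\sigma_0$ on $U''$, is also fine but imports a nontrivial theorem together with the hypothesis that $G$ be Polish, which the elementary translate argument does not need. Two small caveats: the lemma as stated makes no countability or Polishness assumption, so strictly speaking your argument proves less than the statement claims (though it covers every case in which the lemma is used in the paper, namely locally compact second countable groups); and ``$G$ Polish'' and ``$G''$ $\sigma$-compact'' are not equivalent conditions --- they are the separate hypotheses required by your two separate routes, so the word ``equivalently'' in your last sentence should be replaced by ``respectively''.
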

\begin{proof}
 By the hypothesis and Zorn's lemma, 
we can assume that there is a maximal
$Z\subset G''$  measurable set containing an open neighbourhood of
identity, and a section  $\sigma: Z\to G$
continuous in a neighbourhood of identity on $G''$. Suppose $Z$ is not
equal to $G$. By translation, given 
an element $w$ in the complement of $Z$ in $G''$ we can find a
neighbourhood $U_w$ of $w$ in $G''$,   and  a continuous section
$\sigma_w$ to $\jmath$
on $U_w$. Patching the sections $\sigma$ and $\sigma_w|_{U_w\cap
  (G''-\mathbb Z)}$ we get a measurable extension of $\sigma$. By maximality
this implies $Z=G''$. 
\end{proof}
\begin{remark}
\label{lindel}
When the group $G''$ is Lindel\"of (meaning every open cover 
of $G''$ admits a countable subcover), the above lemma can be 
proved easily without using Zorn's lemma. 
\end{remark}

\subsubsection{Long exact sequence}
As a corollary of Lemma
 \ref{extendsection},
 we associate to each \emph{locally split} short exact sequence,
a long exact sequence of locally continuous measurable cohomology
groups. 
\begin{cor} 
Consider  a  locally split short exact sequence of topological $G$-modules. 
$0\ra A'\ra A\ra A''\ra 0$. Then there is a short exact sequence of cochain complexes, 
\[ 0\ra
C^*_{lcm}(G,A')\overset{\tilde\imath}\ra C^*_{lcm}(G,A)\overset{\tilde\jmath}\ra C^*_{lcm}(G,A'')\ra 0.\]
Hence, there is a long exact sequence of locally continuous measurable
cohomology groups,
\[0\ra H^0_{lcm}(G,A')\ra  H^0_{lcm}(G,A)\ra
H^0_{lcm}(G,A'')\overset\delta{\ra} H^1_{lcm}(G,A')\ra\cdots \]

\end{cor}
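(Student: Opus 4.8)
The plan is to deduce the long exact sequence of cohomology groups from the short exact sequence of cochain complexes in the standard homological way, so the real content is establishing exactness of
\[ 0\ra C^*_{lcm}(G,A')\overset{\tilde\imath}\ra C^*_{lcm}(G,A)\overset{\tilde\jmath}\ra C^*_{lcm}(G,A'')\ra 0, \]
where $\tilde\imath,\tilde\jmath$ are post-composition with $\imath,\jmath$. First I would check exactness on the left and in the middle: since $\imath$ is a closed embedding and $\jmath$ is open with $\ker\jmath=\imath(A')$, a measurable $f\colon G^n\to A$ with $\jmath\circ f=0$ factors set-theoretically through $\imath$; the resulting $A'$-valued cochain is measurable because $\imath$ is a homeomorphism onto its (closed) image, and it is continuous near identity because $f$ is and $\imath^{-1}$ is continuous on $\imath(A')$. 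Also $\tilde\imath$ is injective since $\imath$ is. These steps involve no choices and are routine.

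The one genuinely nontrivial point is surjectivity of $\tilde\jmath$ at each level $n$: given a measurable $h\colon G^n\to A''$ that is continuous on some neighbourhood $V$ of identity in $G^n$, I must lift it to a measurable $f\colon G^n\to A$ that is continuous near identity and satisfies $\jmath\circ f=h$. Here is where the local-splitness hypothesis enters, via Lemma \ref{extendsection}: the sequence $0\ra A'\ra A\ra A''\ra 0$ of topological $G$-modules is in particular a locally split short exact sequence of topological groups, so there is a measurable section $\sigma\colon A''\to A$ of $\jmath$ that is continuous on a neighbourhood $W$ of $0$ in $A''$. Define $f=\sigma\circ h$. Then $f$ is measurable (composite of measurable maps), $\jmath\circ f=h$, and $f$ is continuous on the open set $h^{-1}(W)\cap V$, which contains the identity of $G^n$ since $h(1,\dots,1)=0\in W$ and $h$ is continuous at the identity. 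Thus $f\in C^n_{lcm}(G,A)$ and maps to $h$.

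Having the short exact sequence of complexes, the long exact sequence — including the identification $H^0_{lcm}(G,A)=A^G$ already noted in the text and the connecting maps $\delta$ — follows from the usual snake-lemma argument applied to the cochain complexes, exactly as in the classical group cohomology case; no further topological input is needed because exactness has already been arranged termwise. The main obstacle is precisely the surjectivity step above, and the key observation making it work is that composing with the locally continuous measurable section $\sigma$ from Lemma \ref{extendsection} preserves both measurability (everywhere) and continuity (near the identity), the latter because the identity of $G^n$ is sent by any cochain representing a normalized situation — or more simply, by any cochain at all after noting $h$ is continuous near identity and $h(e)$ can be taken to lie in $W$ — into the region where $\sigma$ is continuous.
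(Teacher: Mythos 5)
Your overall route is the same as the paper's: everything rests on the locally continuous measurable section $\sigma\colon A''\to A$ supplied by Lemma \ref{extendsection}. The paper merely writes down the connecting map $\delta(s)=d(\sigma\circ s)$, whereas you verify termwise exactness of the cochain complexes and then invoke the snake lemma; these are the same argument organized differently, and your version is the more complete one.

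There is, however, one step that does not work as written. In the surjectivity of $\tilde\jmath$ you set $f=\sigma\circ h$ and claim $f$ is continuous on $h^{-1}(W)\cap V$, a neighbourhood of the identity of $G^n$, ``since $h(1,\dots,1)=0\in W$.'' But the cochains in $C^n_{lcm}(G,A'')$ are not normalized: $h(e,\dots,e)$ is an arbitrary element of $A''$, while $W$ is a neighbourhood of $0$, so $h^{-1}(W)$ need not contain the identity of $G^n$ and $\sigma\circ h$ need not be continuous there. Your parenthetical remark that ``$h(e)$ can be taken to lie in $W$'' is not something you are free to arrange for a given $h$. The repair is a one-line translation: pick any $a_0\in A$ with $\jmath(a_0)=h(e,\dots,e)$ and replace $\sigma$ by $\sigma_h(x)=\sigma\bigl(x-h(e,\dots,e)\bigr)+a_0$. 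This is again a measurable section of $\jmath$, it is continuous on the neighbourhood $h(e,\dots,e)+W$ of $h(e,\dots,e)$, and therefore $\sigma_h\circ h$ is measurable everywhere and continuous near the identity of $G^n$, so it is a valid lift in $C^n_{lcm}(G,A)$. (Equivalently, lift the constant cochain $h(e,\dots,e)$ separately and apply your argument to $h-h(e,\dots,e)$.) With that adjustment the surjectivity, and hence the whole corollary, goes through as you describe; note that the same care is needed in the paper's own formula $\delta(s)=d(\sigma\circ s)$, since $n$-cocycles for $n\geq 2$ need not vanish at the identity either.
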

\begin{proof}
 Since this is fundamental to our construction of cohomology theories,
 we briefly indicate the construction of the connecting homomorphism 
\[\delta: H^n_{lcm}(G, A'')\to H^{n+1}_{lcm}(G, A').\]
We choose a locally continous measurable  section $\sigma: A''\to A$ 
as given by Lemma  \ref{extendsection}. The connecting homomorphism is defined as, 
\[\delta(s)=d(\sigma\circ s), \quad s\in Z^n_{lcm}(G, A'').\]
This gives a well-defined cocycle with values in $A'$, and
 the cohomology class defined by this cocycle 
is independent of the choice of the section $\sigma$. 
\end{proof}

\subsubsection{The first cohomology group}
\begin{prop}
 Suppose $G$ is a topological group and and $A$ is a
topological $G$-module. Then
\[Z^1_{lcm}(G,A)=Z^1_{cont}(G,A)\]
is the space of continuous crossed homomorphisms from $G$ to $A$. 
\end{prop}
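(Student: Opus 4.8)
The plan is to prove the statement in two halves: first that any $f \in Z^1_{lcm}(G,A)$, being a cocycle which is merely continuous near the identity, is in fact continuous everywhere; and second that conversely any continuous crossed homomorphism lies in $Z^1_{lcm}(G,A)$, which is immediate from the definitions since a globally continuous map is in particular measurable and continuous near the identity. So the content is entirely in the first half. First I would write out what the cocycle condition $d^1 f = 0$ says: for all $s,t \in G$, $s\cdot f(t) - f(st) + f(s) = 0$, i.e. $f(st) = f(s) + s\cdot f(t)$. This is the defining relation of a crossed homomorphism, and in particular $f(e) = 0$.

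Next I would use this relation to propagate continuity from a neighbourhood of the identity to all of $G$. Fix any $g_0 \in G$ and let $V$ be an open neighbourhood of the identity on which $f$ is continuous. For $t$ ranging over $V$ we have, taking $s = g_0$ in the cocycle identity, $f(g_0 t) = f(g_0) + g_0 \cdot f(t)$. The right-hand side is a continuous function of $t$ on $V$: $f$ is continuous on $V$ by hypothesis, the action map $G \times A \to A$ is continuous so $a \mapsto g_0 \cdot a$ is continuous, and translation by the constant $f(g_0)$ is continuous in the topological group $A$. Hence $t \mapsto f(g_0 t)$ is continuous on $V$, which says precisely that $f$ is continuous on the open neighbourhood $g_0 V$ of $g_0$. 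Since $g_0$ was arbitrary, $f$ is continuous on all of $G$; therefore $Z^1_{lcm}(G,A) \subseteq Z^1_{cont}(G,A)$. The reverse inclusion $Z^1_{cont}(G,A) \subseteq Z^1_{lcm}(G,A)$ is clear, and both coincide with the set of continuous crossed homomorphisms since the cocycle condition is exactly the crossed-homomorphism identity.

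I do not expect a serious obstacle here: unlike the deep Banach-type argument needed for the $H^1_m = H^1_{cont}$ comparison (which must upgrade mere \emph{measurability} to continuity and genuinely uses local compactness and second countability), the local-to-global step for $Z^1_{lcm}$ is a soft translation argument that works for arbitrary topological groups $G$ and arbitrary topological $G$-modules $A$ — no measure theory and no countability hypotheses are invoked. The only mild point to be careful about is that one really does get an \emph{open} neighbourhood $g_0 V$ of $g_0$ on which $f$ is continuous, which is fine because left translation by $g_0$ is a homeomorphism of $G$; and that the continuity of $a \mapsto g_0 \cdot a$ follows from joint continuity of the action restricted to $\{g_0\} \times A$. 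With these observations the proof is complete.
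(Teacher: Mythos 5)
Your proof is correct and is essentially the paper's own argument: both use the cocycle identity $f(g_0t)=f(g_0)+g_0\cdot f(t)$ to translate continuity from the neighbourhood $V$ of the identity to $g_0V$, using continuity of the $G$-action and of translation in $A$. The only difference is that you also spell out the (trivial) reverse inclusion, which the paper leaves implicit.
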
 
\begin{proof} 
A locally continuous measurable $1$-cocycle is a measurable function
$c:G\to A$ satisfying  the cocycle condition
 \[c(s_1s_2)=s_1\cdot c(s_2)+c(s_1)\quad \forall s_1,s_2\in G.\]
Further, there exists an open set $U\subset G$ containing
identity such that $c|_U$ is   continuous. 
For any $x\in G$ arbitrary,
 the map $c|_{xU}$ satisfies  following formula.
\[c(xs)=x\cdot c(s)+c(x)\textnormal{ for all }s\in U.\]
Since the group action is continuous and
the map of  translation
by $c(x)$ is  continuous on $A$, we see that $c$ is 
continuous on $xU$.
\end{proof}

\begin{remark}
This holds in greater generality in the context of the measurable
cohomology groups constructed by Moore. Using Banach's theorem 
that any measurable homomorphism between two polish groups is
continuous, it can be seen that if $G$ and $A$ are locally compact and
$G$ acts continuously on $A$, then the first measurable cohomology
group is the group of all continuous crossed homomorphisms from $G$ to $A$.
\end{remark}

\subsection{Other constructions}
It is possible to construct other cohomology theories imposing different conditions 
on the nature of the cochains. For example, the proof of Lemma \ref{extendsection} 
can be modified to extend a
continuous section to a dense, open subset of
$G''$. We can then  construct a cohomology theory based on continuous
cochains defined on dense open subsets of products of $G$ (or even
measurable cochains which are continuous  on dense open subsets of
products of $G$).  However,
such a cohomology theory does not have restriction maps to subgroups
in general. Further, it seems difficult to relate the second
cohomology group (based on continuous
cochains defined on dense open subsets of products of $G$) to
extensions of $G$. 

Another construction can be based on set theoretic cochains which are
continuous in a neighbourhood of identity of $G^n$. But here again,
the second cohomology group does not seem to correspond to extensions of $G$
having local sections.

\section{Locally split extensions and $H^2_{lcm}(G,A)$} 
Let $G, ~A$ be locally compact second countable topological groups,
and assume that $A$ is a continuous $G$-module. 
Our aim out here is to establish a bijective
 correspondence between the second cohomology group $H^2_{lcm}(G,A)$ 
and the collection of all \emph{locally split} extensions $E$ of $G$
by $A$, i.e., those extensions for which there exists a continuous
section for the map $\pi\colon E\to G$ in some neighbourhood of identity.  

\begin{theorem}

 Let $G$ be a locally compact, second countable
topological group acting continuously on a locally compact, second
countable abelian group $A$. Then the second cohomology group
$H^2_{lcm}(G,A)$ parametrizes isomorphism classes of locally split  extensions of $G$
by $A$.
\end{theorem}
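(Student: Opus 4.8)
The plan is to establish the correspondence in both directions and then check it is a bijection on isomorphism classes. First I would recall the classical algebraic construction: given a locally continuous measurable $2$-cocycle $b \in Z^2_{lcm}(G,A)$ (normalized so that $b(1,s)=b(s,1)=0$), form the set $E = A \times G$ with multiplication $(a,s)(a',s') = (a + s\cdot a' + b(s,s'), ss')$. Associativity is exactly the cocycle condition, so $E$ is an abstract group fitting into $1 \to A \to E \to G \to 1$. The nontrivial work is entirely topological: I need to put a topology on $E$ making it a topological group, so that the inclusion of $A$ is closed, the projection to $G$ is open and continuous, and the extension is locally split. The natural recipe, as the introduction indicates, is to pick a neighbourhood $U$ of identity in $G^2$ on which $b$ is continuous, shrink to a symmetric neighbourhood $V$ of $1$ in $G$ with $V\cdot V$ contained in the relevant set, declare $A \times V$ (a ``tubular neighbourhood'') to carry the product topology, and then transport this around by left translations $(a,s)\cdot(-)$, decreeing that left translations are homeomorphisms. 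One checks this is a consistent basis: on overlaps of left-translated tubes the transition maps are continuous precisely because $b$ is continuous near identity and the $G$-action on $A$ is continuous.

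The next step is to verify the axioms of a topological group for this topology. Continuity of multiplication and inversion near the identity follows from continuity of $b$ and of the action on a neighbourhood of $1$; continuity of left translations is built in. The genuinely delicate point — and this is the main obstacle — is showing that \emph{right} translations, equivalently inner conjugations $c_{(a_0,s_0)}(a,s) = (a_0,s_0)(a,s)(a_0,s_0)^{-1}$, are continuous at the identity of $E$ for every fixed $(a_0,s_0)$. Globally, $b$ is only measurable, so conjugation by a far-away element need not be continuous for any obvious reason. Following the idea flagged in the introduction, I would adapt the proof of Banach's theorem that a measurable homomorphism between locally compact second countable groups is automatically continuous: one uses that a measurable map which is ``large'' on a set of positive measure must, by a Steinhaus-type argument (difference sets of positive-measure sets contain a neighbourhood of identity), be continuous at one point, hence everywhere by the homomorphism property. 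Here conjugation $c_{(a_0,s_0)}$ is an automorphism of the abstract group $E$ that is measurable (since $b$ is measurable and the action is continuous), and $E$ with the topology just built is second countable and — using local compactness of $G$ and $A$ — locally compact; so Banach's argument applies verbatim to conclude $c_{(a_0,s_0)}$ is continuous. This then upgrades the topology on $E$ to a genuine topological group topology, with $A \hookrightarrow E$ closed, $E \to G$ open and continuous, and the tube $A\times V$ giving the continuous local section $s \mapsto (0,s)$, so the extension is locally split.

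With the map $b \mapsto E_b$ in hand, I would run the standard verifications. Cohomologous cocycles $b, b' = b + d^1 f$ with $f$ a locally continuous measurable $1$-cochain give isomorphic extensions via $(a,s) \mapsto (a + f(s), s)$, which is a homeomorphism because $f$ is measurable and continuous near $1$ — here it is important that the isomorphism need only be continuous, and continuity near the identity plus the topological-group structure (left translations) propagates it everywhere. Conversely, given a locally split extension $1 \to A \to E \overset{\pi}\to G \to 1$, choose a measurable section $\sigma: G \to E$ that is continuous near the identity (this exists by Lemma \ref{extendsection}, since local splitness is precisely the hypothesis there, and $G$ locally compact second countable is Lindel\"of so Remark \ref{lindel} even avoids Zorn), normalize $\sigma(1)=1$, and set $b(s,t) = \sigma(s)\sigma(t)\sigma(st)^{-1} \in A$; this is measurable, and continuous on a neighbourhood of $(1,1)$ in $G^2$ because $\sigma$ is continuous near $1$ and multiplication/inversion in $E$ are continuous, so $b \in Z^2_{lcm}(G,A)$. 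Changing the section by a locally continuous measurable $1$-cochain changes $b$ by a coboundary in $B^2_{lcm}$, so the class $[b] \in H^2_{lcm}(G,A)$ is well-defined. Finally I would check the two constructions are mutually inverse up to the appropriate equivalences: $E_b$ with its section $s\mapsto(0,s)$ recovers $b$ on the nose, and for a general locally split $E$ the map $(a,s) \mapsto a\cdot\sigma(s)$ is an isomorphism of extensions $E_b \to E$, continuous near identity (hence everywhere) because $\sigma$ is. This establishes the bijection between $H^2_{lcm}(G,A)$ and isomorphism classes of locally split extensions, completing the proof.
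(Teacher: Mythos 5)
Your overall architecture matches the paper's: construct the abstract extension from a cocycle, topologize it by putting the product topology on a tube $A\times V$ and transporting by left translations, reduce the topological-group axioms (via Bourbaki's criterion) to continuity of inner conjugation at the identity, and attack that last point with a Banach-type measure-theoretic argument. The converse direction and the well-definedness checks are also essentially as in the paper.

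However, there is a genuine gap at the crucial step. You write that $E$ with the tube topology is locally compact and second countable, ``so Banach's argument applies verbatim'' to the measurable automorphism $c_{(a_0,s_0)}$. It cannot apply verbatim, because Banach's theorem (and the Steinhaus difference-set lemma underlying it) is a statement about measurable homomorphisms between locally compact \emph{topological groups}, and at this stage $E$ is not yet known to be a topological group --- continuity of inner conjugation at the identity is precisely the missing axiom you are trying to verify. Concretely, two inputs of the Steinhaus argument are unavailable and must be manufactured by hand: (1) a left-invariant measure on $E$ --- there is no Haar measure theorem to invoke, and the paper devotes an entire subsection to constructing the invariant integral on $E$ following Fell's construction, including a substitute (Lemma \ref{uniformcontinuity}) for the uniform continuity lemma, since $E$ carries no uniform structure yet; and (2) the continuity of the convolution-type function $u(x)=\mu(M\cap xM)$, which in the classical proof uses global continuity of multiplication but here is only available near the identity. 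The paper's Proposition \ref{keyproposition} handles this by forcing the positive-measure symmetric set $M$ into a tube $\pi^{-1}(U_2)$ with $\overline{U}_2\overline{U}_2\subset U_F$, so that all products appearing in the estimate of $u(x_n)-u(x_0)$ stay in the region where multiplication and inversion are continuous; one also needs a separate global argument (via the Lindel\"of property and surjectivity of $\imath_x$) to guarantee that $\imath_x^{-1}(W)$ has positive measure in the first place. Without carrying out these constructions, the claim that conjugation is continuous at the identity is unsupported, and the whole topologization of $E$ collapses.
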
 

Consider a locally split extension  $E$ of $G$ by $A$. 
We now associate a unique  
cohomology class in $H^2_{lcm}(G,A)$. By Lemma \ref{extendsection}, 
choose a measurable section  
$\sigma\colon G\ra E$ which is
continuous in a neighbourhood of identity in $G$. Define 
$f_{\sigma}\colon G\times G\ra A$ as
$f_{\sigma}(s_1,s_2)=\sigma(s_1)\sigma(s_2)\sigma(s_1s_2)^{-1}$. It
can be verified  that $f_{\sigma}$ satisfies the
cocycle condition and is continuous in a neighbourhood
of identity in $G\times G$. If we choose some other section $\tau$
having properties as above, then 
\[ f_{\tau}(s_1, s_2)=f_{\sigma} (s_1,s_2)+d(\sigma\tau^{-1})(s_1,s_2).\]
Since $\sigma,\tau:G\ra A$ are in $C^1_{lcm}(G,A)$, we see that
 $\sigma\tau^{-1}:G\ra A$ is measurable. It is continuous
around identity and hence it gives a $1$-cochain in $C^1_{lcm}(G,A)$. 
Therefore, $d(\sigma\tau^{-1})$ is a locally continuous $2$-coboundary 
and hence, both the sections  give  the same class in $H^2_{lcm}(G,A)$.

\subsection{Neighbourhoods of identity in $E$} 

Consider a measurable $2$-cocycle $F\colon G\times G\ra A$ which is continuous
on a neighbourhood $U_F\times U_F\subset G\times G$ of identity.
Since it is an abstract $2$-cocycle, we get an abstract extension
\[1\ra A\overset\imath\ra E\overset\pi\ra G\ra 1.\]

In order to topologize $E$, we define a base $\mathcal B$ for the neighbourhoods
 of identity in $E$ that consists of sets of the form 
$U_A\times U_G$, where $U_A$ and $U_G$ are open neighbourhoods of 
identity in $A$ and $G$ respectively, such that $F|_{U_G\times U_G}$
is continuous. It is easy to see that $\mathcal B$ is a filter base
in the terminology of Bourbaki ({\it cf.} \cite[Chapter 1, Section 6.3]{bourbaki}).
Let us call any subset of $E$, 
containing  some member of $\mathcal B$, as a neighbourhood of identity in $E$. 

Since the cocycle $F$ is continuous in  a neighbourhood of identity, it can be verified that the multiplication  map $E\times E\to E$ (resp. inverse map $E\to E$) are continuous in a neighbourhood of identity. From Proposition $1$ of Bourbaki
({\it cf.}\cite[Chapter 3, Section 1.2, page 221]{bourbaki}), 
for $E$ to be a topological group with $\mathcal B$ as a base for the 
neighbourhoods at identity, it is necessary and sufficient that 
inner conjugation by any element $a\in E$ is continuous at
  identity: for  $a\in\tilde E$ and any  
$V\in \tilde{\mathcal B}$, there exists $V'\in \tilde{\mathcal B}$
such that $V'\subset a\cdot V\cdot a^{-1}$.
 We single this out as a theorem: 
\begin{theorem} 
\label{maintheorem}
Let $E$ be an extension of the group $G$ by $A$ corresponding to 
the $2$-cocycle $F:G\times G\ra A$ and is provided with the neighbourhood
topology ${\mathcal B}$  defined above. Then for any  $x\in E$, the map of
inner conjugation $\imath_x:E\ra E$ is continuous at identity. 
\end{theorem}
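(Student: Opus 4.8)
The plan is to imitate the classical proof of Banach's theorem that a measurable homomorphism between locally compact second countable groups is continuous, applied now to the inner automorphism $\imath_x$ of the abstract group $E$. It is convenient first to reduce to $x=\sigma(g)$ with $g\in G$: writing a general $x$ as $\imath(a)\sigma(g)$ with $g=\pi(x)$ and $a\in A$, one has $\imath_x=\imath_{\imath(a)}\circ\imath_{\sigma(g)}$, and $\imath_{\imath(a)}$ is continuous at $e_E$ because normality of $\imath(A)$ yields $\imath_{\imath(a)}(y)=\imath\!\left(a-\pi(y)\cdot a\right)y$, a formula which near $e_E$ is a composite of the continuous maps $\pi$, the action $G\times A\to A$, subtraction in $A$, $\imath$, and multiplication in $E$ (the last being continuous near $e_E$ by the construction of $\mathcal B$). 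So it suffices to show $\imath_{\sigma(g)}\colon E\to E$ is continuous at $e_E$. Observe that $\imath_{\sigma(g)}$ is a group automorphism of $E$ fixing $e_E$, and that it is Borel measurable for the standard Borel structure of $E\cong A\times G$, because $F$ is measurable and every other constituent of the group law is continuous.

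Next I would install the measure-theoretic input. Transport to $E\cong A\times G$ the finite Borel measure equivalent to the product of the (finite equivalents of the) Haar measures of $A$ and $G$. This measure is quasi-invariant under left translations of $E$: in coordinates a left translation in $E$ is a left translation in $G$ combined with a topological automorphism of $A$ (action by a fixed element of $G$) and then a translation of the $A$-factor, all of which are quasi-invariant. The decisive local statement is a Steinhaus--Weil lemma for $E$: if $S\subseteq E$ is Borel of positive measure, then $S^{-1}S$ contains a neighbourhood of $e_E$. Here the construction of $\mathcal B$ is exactly what makes the classical convolution argument available: on a neighbourhood of $e_E$ the topology of $E$ is the product topology and multiplication and inversion are continuous, so $\mathbf{1}_{S^{-1}}\ast\mathbf{1}_S$ is continuous near $e_E$ and strictly positive at $e_E$, hence strictly positive on a neighbourhood of $e_E$, and that neighbourhood lies inside $S^{-1}S$.

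Since left translations of $E$ are homeomorphisms by construction, $\{\,yW:y\in E\,\}$ is an open cover of $E$ for every neighbourhood $W$ of $e_E$; and $E$ is $\sigma$-compact --- one checks this from local compactness of $E$ (the basic sets $U_A\times U_G$ may be taken relatively compact) together with $\sigma$-compactness of $G$ and $A$, continuity of $\pi$, and continuity of $\sigma$ near $e_G$ --- so this cover has a countable subcover $E=\bigcup_n y_nW$. Now fix a basic neighbourhood $V$ of $e_E$ inside the region where the product topology prevails, and choose an open symmetric $W$ with $WW\subseteq V$. The Borel sets $\imath_{\sigma(g)}^{-1}(y_nW)$ cover $E$, so one of them, $P=\imath_{\sigma(g)}^{-1}(y_{n_0}W)$, has positive measure. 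For $p,p'\in P$ we get $\imath_{\sigma(g)}(p^{-1}p')=\imath_{\sigma(g)}(p)^{-1}\imath_{\sigma(g)}(p')\in(y_{n_0}W)^{-1}(y_{n_0}W)=W^{-1}W\subseteq V$, so $\imath_{\sigma(g)}(P^{-1}P)\subseteq V$. By the Steinhaus--Weil lemma, $P^{-1}P$ contains a neighbourhood $N$ of $e_E$, whence $\imath_{\sigma(g)}(N)\subseteq V$ --- which is continuity of $\imath_{\sigma(g)}$, and so of $\imath_x$, at $e_E$.

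The routine part is this last step; the real work, and the main obstacle, is the middle two: making the Steinhaus--Weil lemma and the $\sigma$-compactness of $E$ available \emph{before} $E$ is known to be a topological group. One cannot quote the classical statements outright, and must instead lean on the two structural features built into $\mathcal B$ --- that near $e_E$ the topology of $E$ is the product topology (with continuous group operations there), and that left translations are homeomorphisms of all of $E$. Granted these, the proof of Banach's theorem transfers essentially word for word, as the authors anticipate.
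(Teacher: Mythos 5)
Your overall strategy --- run the Banach/Steinhaus--Weil argument for the measurable automorphism $\imath_x$, using that left translations are homeomorphisms and that the group operations are continuous near the identity --- is exactly the strategy of the paper, and your endgame (cover $E$ by countably many translates $y_nW$, extract a positive-measure preimage $P$ of a translate of $W$, and conclude from $P^{-1}P$ containing a neighbourhood of $e_E$) matches the paper's. The gap is in the Steinhaus--Weil lemma itself. You assert it for an arbitrary Borel set $S$ of positive measure and justify the continuity of $\mathbf{1}_{S^{-1}}\ast\mathbf{1}_S$ near $e_E$ by appealing only to the fact that multiplication and inversion are continuous \emph{on a neighbourhood of $e_E$}. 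But the set you apply it to, $P=\imath_{\sigma(g)}^{-1}(y_{n_0}W)$, is global: the convolution integral ranges over all of $P$, and its continuity in $x$ near $e_E$ requires comparing $\mathbf{1}_P(y^{-1}x)$ with $\mathbf{1}_P(y^{-1})$ for $y$ throughout a large compact set --- that is, a uniform continuity statement over arbitrary compacta of a space that is not yet known to be a topological group. This is precisely the content of the paper's Lemma \ref{uniformcontinuity}, proved from scratch by a sequential argument using only that left translations are homeomorphisms; your proposal neither states nor supplies such a lemma, and local continuity of the operations at $e_E$ does not yield it.

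The paper also makes two arrangements that your sketch is missing and that are needed to make the convolution argument go through. First, the positive-measure set is forced into the tube $\pi^{-1}(U_2)$ with $\overline{U}_2\overline{U}_2\subset U_F$, by choosing $W\subset\pi^{-1}(xU_2x^{-1})$ so that $\imath_x^{-1}(W)\subset\pi^{-1}(U_2)$; this guarantees that inversion is continuous on the support of the Lusin approximant $f$ and that all products formed stay in the region where the operations are continuous. Second, the set is taken symmetric (automatic for $\imath_x^{-1}(W)$ with $W$ symmetric, but false for your translated $P$), which is what lets one pass from positivity of $u(x)=\mu(M\cap xM)$ to $x\in MM$. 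Your preliminary reduction to $x=\sigma(g)$ and your use of a quasi-invariant product measure in place of a constructed left Haar measure are reasonable variations (the paper instead builds a genuine left-invariant integral on $E$ \`a la Fell--Doran, again using Lemma \ref{uniformcontinuity}), but neither removes the need for the uniform continuity lemma, which is where the real difficulty of topologizing $E$ is concentrated and which your proposal leaves unaddressed.
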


Proof of this theorem will occupy next few sections. The proof 
of this theorem is modelled on the proof of Banach's theorem that
measurable homomorphims of second coutntable
 locally compact topological groups are
continuous. Heuristically, this can be considered as saying that 
the topology of a locally compact group can be
recovered from the underlying measure theory. 
Our proof of the above
theorem makes this heuristic precise. 

In our situation, $E$ can be
equipped with a measure structure since the cocycle $F$ is measurable.  
We topologize $E$ with a neighbourhood base filter ${\mathcal B}$,
imposing the condition that left translation is continuous. We show
that there exists a left invariant measure on $E$. This will allow us
to define convolution of measurable functions.  
We then use the fact that the multiplication and inverse maps
are continuous in a neighbourhood of identity $e$, together with a 
global argument to prove the above theorem.

\subsection{Topology on extension group}
We topologize $E$ by considering the left translates $x\mathcal B$ as 
a base for  the open neighbourhoods of $x\in E$. With this topology
left multiplication by any element $x\in E$ is a continuous map from
$E$ to $E$. It is easy to observe the following proposition 
listing some basic properties of the topological space $E$. 
\begin{prop}
\begin{enumerate}
 \item 
The homomorphisms $\imath,~ \pi$ are continuous and $\pi$ is an open map.
\item 
There exists an open neighbourhood $U_F\subset G$ of identity in
  $G$, and a continuous section $\sigma\colon U_F\ra E$.

\item
        The inclusion $\imath\colon A\ra E$ is 
	a homeomorphism onto its image 
	and $\imath(A)$ is a closed subset of $E$.
\item 
	 $E$ is a locally compact, second countable, Hausdorff space. 

\item The Borel algebra on $E$ is generated by members 
of the filters $\underset{ x\in E}\cup x\mathcal B$.
Moreover,  the measure structure on $E$ is  product of 
 measure structures on $G$ and $A$.
\item The group law
and the inverse map on $E$ are Borel measurable functions.
Hence, the map $\imath_x\colon E\ra E$ of inner conjugation 
by any $x\in E$ is Borel measurable.
\end{enumerate}
\end{prop}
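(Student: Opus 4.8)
The plan is to work with the explicit realization of $E$ as the set $A\times G$ equipped with the multiplication $(a_1,s_1)(a_2,s_2)=(a_1+s_1\cdot a_2+F(s_1,s_2),s_1s_2)$, with $\imath(a)=(a,e_G)$ and $\pi(a,s)=s$; after replacing $F$ by a cohomologous cocycle (which changes $E$ only by an isomorphism of extensions) we may assume $F$ is normalized, $F(e_G,s)=F(s,e_G)=0$, so that left translation by $\imath(a_0)$ is $(a,s)\mapsto(a_0+a,s)$. The crux of the whole proposition is the following local statement: $\pi^{-1}(U_F)$ is open in $E$, and the subspace topology it carries is exactly the product topology of $A\times U_F$. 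Openness is read off directly from the description of a basic neighbourhood $x(V_A\times V_G)$. For the topology one fixes $x=(a_0,s_0)$ with $s_0\in U_F$, computes
\[x(V_A\times V_G)=\{(a_0+s_0\cdot v+F(s_0,v'),\,s_0v'):v\in V_A,\ v'\in V_G\},\]
and checks that this family is cofinal with the product-topology neighbourhood filter at $x$. The two ingredients are that $v\mapsto s_0\cdot v$ is a homeomorphism of $A$ (the action is continuous and is inverted by the continuous action of $s_0^{-1}$) and that $v'\mapsto F(s_0,v')$ is continuous near $e_G$ with value $0$ there, which is exactly where $s_0\in U_F$ and the local continuity of $F$ are used; both inclusions then follow because $s_0\cdot v+F(s_0,v')\to0$ as $(v,v')\to(0,e_G)$.

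Granting this, parts (i)--(iii) are short. On $\pi^{-1}(U_F)=A\times U_F$ the maps $\pi$ and $\imath\colon A\to A\times\{e_G\}$ are continuous and the section $\sigma(s)=(0,s)$ is continuous on $U_F$; continuity of $\pi$ and $\imath$ at a general point comes from continuity near $e$ via the relations $\pi\circ L_x=L_{\pi(x)}\circ\pi$ and $\imath\circ L_a=L_{\imath(a)}\circ\imath$ together with the fact that left translations (on $E$, on $G$, on $A$) are homeomorphisms. That $\pi$ is open follows from $\pi(x(V_A\times V_G))=\pi(x)V_G$. For (iii), $\imath(A)=\pi^{-1}(\{e_G\})$ is closed because $G$ is Hausdorff and $\pi$ continuous, and since $\imath(A)\subset\pi^{-1}(U_F)$ its subspace topology is the product one, i.e.\ a copy of $A$, so $\imath$ is a homeomorphism onto its image.

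For (iv): if $x\neq y$, apply the homeomorphism $L_{x^{-1}}$ and separate $e$ from $z=x^{-1}y\neq e$. If $\pi(z)\neq e_G$, pull back along $\pi$ disjoint neighbourhoods of $e_G$ and $\pi(z)$ in the Hausdorff group $G$; if $\pi(z)=e_G$, then $z=(c,e_G)$ with $c\neq0$ and one separates $(0,e_G)$ and $(c,e_G)$ by the $E$-open sets $O_1\times U_F$ and $O_2\times U_F$, where $O_1,O_2$ are disjoint neighbourhoods of $0$ and $c$ in the Hausdorff group $A$. For second countability, cover $G$ by countably many translates $g_kU_F$ (possible since $G$ is second countable, hence Lindel\"of), lift to $x_k\in\pi^{-1}(g_k)$, and observe $E=\bigcup_k x_k\pi^{-1}(U_F)$ is a countable union of open subsets each homeomorphic to the second countable space $A\times U_F$. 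Local compactness follows from the local product picture at $e$ ($A$ and $G$ being locally compact) and homogeneity under left translation.

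For (v) and (vi) the point is bookkeeping of $F$. Second countability makes every open set a countable union of members of $\bigcup_x x\mathcal B$, so the Borel $\sigma$-algebra of $E$ is generated by $\bigcup_x x\mathcal B$. To identify this with the product Borel structure on $A\times G$: on each open piece $x_k\pi^{-1}(U_F)$ the coordinate functions $(a,s)\mapsto a$ and $(a,s)\mapsto s$ differ from the continuous product-projections only by a twist built from the continuous $G$-action and the measurable function $F(g_k,\cdot)$, hence are Borel there, and as these pieces cover $E$ the two coordinate maps are Borel; the reverse inclusion follows by writing a typical $x(V_A\times V_G)$ as the preimage of $V_A$ under a product-Borel map (again a continuous expression twisted by the measurable $F$). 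Finally the formulas for the group law and the inverse on $E=A\times G$ are built from the continuous group operations and action together with the measurable $F$, so both are Borel, whence $\imath_x=L_x\circ R_{x^{-1}}$ is Borel for every $x$ ($L_x$ continuous, $R_{x^{-1}}(y)=y\cdot x^{-1}$ Borel). The main obstacle is the local-product lemma of the first paragraph, and, for (v)--(vi), the care forced by the fact that $F$ is only measurable away from a neighbourhood of identity, so that the global topology of $E$ is genuinely twisted and every global assertion must be reduced to the pieces $x_k\pi^{-1}(U_F)$ via left translation.
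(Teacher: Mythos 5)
Your proposal is correct and follows essentially the same route as the paper: identify $E$ with $A\times G$, use the explicit formulas for the product and inverse together with the measurability of $F$ to get parts (v)--(vi), and reduce everything else to the product structure on $\pi^{-1}(U_F)$ propagated by left translations. The paper dismisses (i)--(iv) as ``easy to observe'' and only writes out (v)--(vi); your explicit lemma that $\pi^{-1}(U_F)$ is open and carries exactly the product topology of $A\times U_F$ is precisely the verification being taken for granted there, and your argument for it (continuity of the action plus continuity of $F(s_0,\cdot)$ near $e_G$ for $s_0\in U_F$) is sound.
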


\begin{proof}
Since $E$ is second countable, its Borel measurable sets are
generated by small open sets, namely the members of 
$\underset{x\in E}\cup x\mathcal B$.
We observe the formulae for group law and the inverse map:
\[(a_1,s_1)(a_2,s_2)=(a_1+s_1\cdot a_2+F(s_1,s_2),~s_1s_2),\]
\[(a,g)^{-1}=(s^{-1}\cdot(-a)+s^{-1}(-F(s,s^{-1})),~e_G).\]
The cocycle, $F:G\times G\ra A$ is measurable, and $G,~A$ are topological 
groups. Therefore, the group law
and inverse map are measurable on the product measure space 
$\mathcal M_A\times \mathcal  M_G$ which is the Borel measure
space $\mathcal M_E$ on $E$.
\end{proof}

\subsection{Construction of left invariant measure on $E$} 
In this section we construct a left invariant Borel measure on $E$. 
By Riesz representation theorem, it is equivalent to construct an
integral on $E$ which is invariant under left translation. For the 
construction of the left invariant integral, we follow the method of \cite[Chapter 3, Section 7]{fell}.
We remark that in our setting,  the only change 
is given by  Lemma \ref{uniformcontinuity}, analogous to the
uniform continuity lemma given by \cite[Chapter 2, Proposition 1.9]{fell}. 

Let $C_c(E)$ denote the space of 
real valued continuous functions with compact
support on $E$, and  $C_c^+(E)\subset C_c(E)$ the subspace of functions
taking nonnegative real values. We denote by $f,g,h$ the functions in
$C_c(E)$. For a function $f$ on $E$ and $u\in E$, let $f_u(x)=f(ux),~  x\in E$
denote the left translation of $f$ by $u$.  

\begin{lemma} \label{majorisation}
Let $E$ be as in
Proposition (\ref{maintheorem}), and let $f,~g\in C_c^+(E)$.
Let $g\neq 0$ with nonnegative values. Then there exist
finitely many positive real numbers
$c_1,c_2,\ldots ,c_r$ and elements $u_1,u_2,\ldots ,u_r\in E$ such
that
\begin{equation} 
f\leq c_1g_{u_1}+c_2g_{u_2}+\cdots +c_rg_{u_r}
\end{equation}
where $g_{u_i}\colon E\ra\mathbb R$
is defined as $g_{u_i}(s)=g(u_is)$, for all $s\in E$.
\end{lemma}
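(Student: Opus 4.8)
The plan is to adapt the classical Weil covering argument for constructing Haar measure. The statement says: given $f, g \in C_c^+(E)$ with $g \neq 0$, we can dominate $f$ by a finite positive linear combination of left translates of $g$. First I would pick a point $x_0 \in E$ and a number $\delta > 0$ such that $g(x) \geq \delta$ for all $x$ in some open neighbourhood $x_0 W$ of $x_0$, where $W \in \mathcal{B}$; such a $W$ exists because $g$ is continuous and nonzero somewhere, and the sets $x\mathcal{B}$ form a base for the topology. Then for any $u \in E$, the translate $g_u(x) = g(ux)$ satisfies $g_u(x) \geq \delta$ whenever $ux \in x_0 W$, i.e. whenever $x \in u^{-1} x_0 W$. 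So each left translate of $g$ is bounded below by $\delta$ on a left translate of the fixed open set $x_0 W$.

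Next I would use that $f$ has compact support $K = \mathrm{supp}(f)$. The collection $\{ x (x_0 W)^{-1} \,:\, x \in K\}$ — more precisely, choosing $u$ so that $u^{-1} x_0 W$ is a neighbourhood of a given point of $K$ — gives an open cover of $K$; here I need that left translation by a fixed element is a homeomorphism of $E$ (which holds by construction of the topology) and that $E$ is locally compact Hausdorff, both recorded in the Proposition listing properties of $E$. By compactness of $K$ extract a finite subcover, say corresponding to group elements $u_1, \dots, u_r$, so that $K \subseteq \bigcup_{i=1}^r u_i^{-1} x_0 W$ and hence for each $x \in K$ at least one $g_{u_i}(x) \geq \delta$. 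Setting $M = \sup_{x \in E} f(x) < \infty$ and $c_i = M/\delta$ for all $i$, we get $f(x) \leq M \leq \sum_{i=1}^r c_i g_{u_i}(x)$ for $x \in K$ (since some term alone already exceeds $M$, and all terms are nonnegative), while for $x \notin K$ we have $f(x) = 0 \leq \sum_i c_i g_{u_i}(x)$. This yields the desired inequality.

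The only mildly delicate point — and the place where the lemma genuinely uses the structure of our $E$ rather than a generic locally compact group — is knowing that $g_u \in C_c^+(E)$, i.e. that left translates of continuous compactly-supported functions are again continuous with compact support; this is exactly why we topologized $E$ so that left translations are continuous (indeed homeomorphisms), as in the subsection ``Topology on extension group''. Everything else is the standard covering compactness argument and requires no computation. I do not expect a serious obstacle here: the real subtlety of the paper is deferred to Theorem \ref{maintheorem} (continuity of inner conjugation), for which this lemma and the resulting left-invariant integral are preparatory tools, following \cite[Chapter 3, Section 7]{fell}.
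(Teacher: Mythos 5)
Your proposal is correct and is essentially the argument the paper intends: the paper dismisses the lemma in one line as following ``from the compactness of the support of $f$,'' and your write-up simply fills in the standard Weil covering argument (a neighbourhood $x_0W$ on which $g\geq\delta$, translated copies covering $\mathrm{supp}(f)$, a finite subcover, and $c_i=\sup f/\delta$), correctly noting that the only point specific to this $E$ is that left translations are homeomorphisms for the topology built from the filter base $\mathcal B$.
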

The proof follows from the compactness of the support of $f$.  This allows us to 
define the following: 

\begin{defn}
 Suppose $f, g\in C_c^+(E)$ are as above, 
we define the approximate
integral of $f$ relative to $g$ as
\[(f;g)=\mbox{inf}~\left\lbrace \underset{i=1}{\overset r\sum} c_i\right\rbrace ,\]
where the tuple $\left( c_1,c_2,\ldots ,c_r\right\rbrace $ runs 
over all the finite sequences of
non-negative numbers for which
 there exist group elements $u_1, u_2,\ldots ,u_r$ 
 satisfying the proposition above. By linearity, we 
define $(f;g)$ for any $f\in C_c(E)$.
\end{defn}

\begin{defn} 
Fix a compactly supported function $g:E\ra \mathbb R^+$.  
If $f,\phi\in C_c^+(E)$ and $\phi\neq 0$, define
$I_\phi(f)=(g;\phi)^{-1}(f;\phi)$
\end{defn}
\vspace{5mm}
It can be seen that the approximate integral $I_{\phi}(f)$ satisfies the 
following properties. The arguments are similar and follows from analogous properties satisfied 
by $(f;g)$ (see \cite[Chapter 3, Lemma 7.4 and page 202]{fell}):
\begin{lemma} 
\label{iphi}
Let  $f,f_1,f_2,\in C_c^+(E)$.  Then:
\begin{enumerate}
\item  If $f\neq 0$ then $(g;f)^{-1}\leq I_\phi (f)\leq (f;g)$;
\item $I_\phi (f_x)=I_\phi (f)~\textnormal{ for all } x\in G$;
\item $I_\phi (f_1+f_2)\leq I_\phi (f_1)+I_\phi (f_2)$;
\item $I_\phi (cf)=cI_{\phi} (f)$ for all $c\in\mathbb{R}_{\geq 0}$.
\end{enumerate}
\end{lemma}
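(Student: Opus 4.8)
The plan is to reduce all four assertions to standard properties of the \emph{Haar covering number} $(\,\cdot\,;\,\cdot\,)$; since $I_\phi(f)=(g;\phi)^{-1}(f;\phi)$ differs from $(f;\phi)$ only by the fixed positive constant $(g;\phi)^{-1}$, each property of $I_\phi$ will be obtained by dividing the corresponding property of $(\,\cdot\,;\phi)$ by $(g;\phi)$. First I would record the following properties of $(f;h)$ for $f,h\in C_c^+(E)$ with $h\neq 0$, proved exactly as in \cite[Chapter 3, Lemma 7.4 and page 202]{fell}: finiteness, $(f;h)<\infty$ (this is the content of Lemma \ref{majorisation}); positive homogeneity, $(cf;h)=c\,(f;h)$ for $c\geq 0$, together with monotonicity; subadditivity, $(f_1+f_2;h)\leq (f_1;h)+(f_2;h)$; submultiplicativity, $(f;h)\leq (f;g)(g;h)$ for any $g\neq 0$ (if $f\leq\sum_i c_i\,g_{u_i}$ and $g\leq\sum_j d_j\,h_{v_j}$ then $f\leq\sum_{i,j}c_id_j\,h_{v_ju_i}$); the strict lower bound $(f;h)\geq \|f\|_\infty/\|h\|_\infty>0$ for $f\neq 0$, read off by evaluating a majorising inequality at a point where $f$ attains its maximum; and left-invariance, $(f_u;h)=(f;h)$ for every $u\in E$. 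All but the last are formal manipulations of majorising inequalities and their sums of coefficients.

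The one point that genuinely uses the construction of $E$ — and the step I would treat most carefully — is left-invariance. Here I would observe that, with the topology on $E$ defined by taking the left translates $x\mathcal B$ as neighbourhood bases, every left translation $L_u\colon s\mapsto us$ is a homeomorphism of $E$ onto itself, with inverse $L_{u^{-1}}$; hence $L_u$ carries compact sets to compact sets and continuous functions to continuous functions, so $f\mapsto f_u$ preserves $C_c^+(E)$, and from $f\leq\sum_i c_i\,h_{u_i}$ one gets $f_u\leq\sum_i c_i\,h_{u_iu}$, whence $(f_u;h)\leq (f;h)$; applying this to $f_u$ and $u^{-1}$ gives equality. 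The essential subtlety — and what makes this a legitimate thing to do at this stage — is that it uses only continuity of \emph{left} translations, never continuity of right translations or of inner conjugations, which is precisely what Theorem \ref{maintheorem} still has to establish; the left-invariant integral being constructed is only a tool toward that theorem and must not secretly presuppose it. I would also note that, $E$ being locally compact Hausdorff by the preceding Proposition, Urysohn's lemma supplies enough functions in $C_c^+(E)$ for every infimum above to be taken over a nonempty set.

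Finally I would deduce the four properties of $I_\phi$. For (1): submultiplicativity gives $(f;\phi)\leq (f;g)(g;\phi)$, so dividing by $(g;\phi)>0$ yields $I_\phi(f)\leq (f;g)$; and $(g;\phi)\leq (g;f)(f;\phi)$ together with $(f;\phi)>0$ (the strict lower bound, valid since $f\neq 0$) gives $I_\phi(f)=(f;\phi)/(g;\phi)\geq (g;f)^{-1}$. For (2): left-invariance gives $(f_x;\phi)=(f;\phi)$, hence $I_\phi(f_x)=I_\phi(f)$. For (3): subadditivity gives $(f_1+f_2;\phi)\leq (f_1;\phi)+(f_2;\phi)$, and dividing by $(g;\phi)$ gives the claim. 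For (4): positive homogeneity gives $(cf;\phi)=c\,(f;\phi)$, and dividing by $(g;\phi)$ finishes. The main obstacle, such as it is, is purely the bookkeeping in the left-invariance step and the reminder that it relies on nothing more than left translations being homeomorphisms; there is no genuinely hard analytic content here, that having been packaged into Lemma \ref{majorisation} and into the properties of $(\,\cdot\,;\,\cdot\,)$, with the rest being arithmetic involving the fixed scaling constant $(g;\phi)^{-1}$.
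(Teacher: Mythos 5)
Your proposal is correct and follows essentially the same route as the paper, which simply invokes the standard properties of the covering number $(f;h)$ from Fell--Doran (finiteness via Lemma \ref{majorisation}, homogeneity, subadditivity, submultiplicativity, the lower bound, and left-invariance) and divides by the fixed constant $(g;\phi)$. Your explicit remark that left-invariance uses only that left translations are homeomorphisms of $E$ --- and not the full topological-group structure still to be established in Theorem \ref{maintheorem} --- is exactly the point that makes the transfer of Fell's argument legitimate here.
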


 We next show that, if $\phi$ has small compact support,
$I_\phi$ is \textquotedblleft nearly additive\textquotedblright. For
this purpose, we require a lemma on uniform continuity, the analogue of 
\cite[Corollary 1.10, page 167]{fell}, whose proof we give since we do not yet have 
an uniform structure on $E$. 

\begin{lemma}
\label{uniformcontinuity} 
Let $f$ be a real valued  continuous function on $E$
and $\epsilon >0$. Suppose $C$ is a compact subset of $E$. 
Then, there is a neighbourhood $V$ of $e$ such that 
\[|f(x)-f(y)|<\epsilon\quad\textnormal{whenever }x, ~y\in C, ~x^{-1}y\in V.\]
\end{lemma}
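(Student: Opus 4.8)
The plan is to prove the uniform continuity lemma by the standard compactness argument, adapted to the fact that $E$ currently only has a neighbourhood base $\mathcal{B}$ at the identity (and its left translates), not a full uniform structure. First I would reduce to the case where $f$ has compact support: enlarge $C$ to a compact neighbourhood $C_0$ of $C$ (possible since $E$ is locally compact by Proposition (v)), choose a continuous function $\eta$ with compact support that is $\equiv 1$ on $C$ and supported in $C_0$, and replace $f$ by $f\eta$; this does not change $f$ on $C$, so it suffices to treat $f \in C_c(E)$. Alternatively one works directly with $C$ compact and keeps $f$ arbitrary continuous, controlling things locally; either way the essential content is the same.

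Next, for each point $x \in C$, use continuity of $f$ at $x$: since the sets $xB$ for $B \in \mathcal{B}$ form a neighbourhood base at $x$, there is $B_x \in \mathcal{B}$ with $|f(x) - f(z)| < \epsilon/2$ for all $z \in xB_x$. Now I want a ``halving'' trick: since multiplication and inversion are continuous in a neighbourhood of identity (Proposition (vi), or rather the earlier remark that $E\times E\to E$ and $E\to E$ are continuous near $e$), for each $x$ choose $B_x' \in \mathcal{B}$ with $B_x' \cdot B_x' \subset B_x$ and $(B_x')^{-1} \subset B_x'$ (shrinking further if necessary and using that $\mathcal{B}$ is a filter base). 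The family $\{xB_x' : x \in C\}$ is an open cover of the compact set $C$, so extract a finite subcover $xB_{x_1}', \dots, x_nB_{x_n}'$. Set $V = \bigcap_{i=1}^n B_{x_i}'$, which lies in $\mathcal{B}$ since $\mathcal{B}$ is a filter base and the intersection is finite.

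Finally I would check that this $V$ works. Suppose $x, y \in C$ with $x^{-1}y \in V$. Pick $i$ with $x \in x_i B_{x_i}'$, so $x_i^{-1}x \in B_{x_i}'$. Then $x_i^{-1}y = (x_i^{-1}x)(x^{-1}y) \in B_{x_i}' \cdot V \subset B_{x_i}' \cdot B_{x_i}' \subset B_{x_i}$, so both $x$ and $y$ lie in $x_i B_{x_i}$, and hence
\[
|f(x) - f(y)| \leq |f(x) - f(x_i)| + |f(x_i) - f(y)| < \epsilon/2 + \epsilon/2 = \epsilon.
\]
The main obstacle, and the reason the lemma needs a separate proof here rather than citing \cite{fell}, is precisely that one cannot invoke a preexisting uniform structure on $E$: every manipulation of neighbourhoods has to be justified directly from the filter-base axioms on $\mathcal{B}$ and from the continuity of multiplication and inversion \emph{near the identity only}. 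In particular one must be careful that all the products $B_{x_i}' \cdot B_{x_i}'$ and inverses used stay inside the region where these operations are known to be continuous, which is why the $B_x$ are taken inside $U_A \times U_G$ with $F$ continuous there; shrinking the initial neighbourhoods to guarantee this is the one point requiring genuine attention.
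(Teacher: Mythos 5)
Your argument is correct, but it is not the route the paper takes. The paper proves the lemma by contradiction: it picks a decreasing sequence of neighbourhoods $V_i$ of $e$ with $\cap_i V_i=\{e\}$ and $V_{i+1}V_{i+1}\subset V_i$, takes points $x_i,y_i\in C$ with $x_i^{-1}y_i\in V_i$ and $|f(x_i)-f(y_i)|\geq\epsilon$, extracts convergent subsequences $x_i\to x_0$, $y_i\to y_0$ using compactness of $C$, and then uses the chain $y_i\in x_iV_i\subset x_0V_{k+1}V_{k+1}\subset x_0V_k$ to force $y_i\to x_0$, contradicting continuity of $f$. You instead give the direct covering argument: continuity of $f$ at each $x\in C$ with respect to the base $x\mathcal{B}$ gives $B_x$, the halving $B_x'B_x'\subset B_x$ (justified by continuity of multiplication at $(e,e)$, exactly the same input the paper uses to arrange $V_{i+1}V_{i+1}\subset V_i$), a finite subcover of $C$, and $V=\cap_i B_{x_i}'$, which is a neighbourhood of $e$ because $\mathcal{B}$ is a filter base. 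Your verification $x_i^{-1}y=(x_i^{-1}x)(x^{-1}y)\in B_{x_i}'B_{x_i}'\subset B_{x_i}$ is sound, and the symmetry condition and the reduction to compact support are both harmless but unnecessary. What each approach buys: your version uses only plain compactness of $C$ and so works without any countability hypotheses, whereas the paper's sequential argument implicitly relies on $E$ being first/second countable (to get the $V_i$ with trivial intersection and to extract convergent subsequences from $C$) --- hypotheses that do hold here since $E$ is locally compact, second countable and Hausdorff. Both proofs draw on the same two facts about $E$ (left translates of $\mathcal{B}$ form neighbourhood bases, and multiplication is continuous near $e$), so neither smuggles in a uniform structure; your direct argument is arguably the cleaner and more robust of the two.
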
 
\begin{proof}
Suppose the lemma is not true. Then there exists $\epsilon>0$,
 a sequence $V_i$ of neighbourhoods of identity $e$  in $E$ 
with  $\cap V_i=\lbrace e\rbrace$,   elements
 $x_i,~y_i\in C$ with $x^{-1}_iy_i\in V_i$, such that 
\[|f(x_i)-f(y_i)|\geq\epsilon. \]
We can assume that $V_{i+1}V_{i+1}\subset V_i$.
Since $x_i,~y_i$ are in a compact set $C$, we can 
assume by passing to a subsequence, that the
sequence $x_i$ (resp. $y_i$) converges to $x_0$ (resp. $y_0$). Since
$f$ is continuous, 
\[|f(x_0)-f(y_0)|\geq\epsilon. \]

Since, $\lbrace x_i\rbrace$ converges to $x_0$, choose $N_k\geq k+1$
such that $x_i\in x_0V_{k+1}$ for $i\geq N_k\geq k+1$.
Now,  
\[y_i\in x_iV_i\subset (x_0V_{k+1})\cdot V_i.\]
\vspace{5mm}
Since  $i\geq k+1$,
\[\quad (x_0V_{k+1})\cdot V_i\subset(x_0V_{k+1})\cdot
V_{k+1}\subset x_0V_k.\]

Therefore, $y_i\in x_0V_k\quad\textnormal{whenever }  i\geq N_k$.
 Hence the sequence $\lbrace y_i\rbrace$ converges to $x_0$.
Since $f$ is continuous, this gives a contradiction. 

\end{proof}

We now prove that $I_{\phi}$ is nearly additive. 
\begin{lemma} 
Given $f_1,f_2\in C_c^+(E)$ and $\epsilon > 0$, we can
find a neighbourhood $V$ of $e$ such that, if $\phi\in C_c^+(E)$ is
non-zero and $supp(\phi)\subset V$, then
\begin{equation} 
|I_\phi(f_1)+I_\phi(f_2)-I_\phi (f_1+f_2)|\leq\epsilon
\end{equation}
\end{lemma}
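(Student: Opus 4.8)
The plan is to follow the classical Urysohn-type argument from \cite[Chapter 3, Section 7]{fell} for the near-additivity of the approximate integral, but to supply the one ingredient that is not automatic in our setting, namely a uniform continuity statement valid with respect to the topology on $E$ constructed above. That ingredient is exactly Lemma \ref{uniformcontinuity}, which has just been proved, so the remaining argument should go through with only cosmetic changes. I would first record that $f_1+f_2\in C_c^+(E)$ and set $K=supp(f_1)\cup supp(f_2)$, a compact subset of $E$; I would also fix a function $h\in C_c^+(E)$ with $h\equiv 1$ on a neighbourhood of $K$ (such $h$ exists because $E$ is locally compact Hausdorff and second countable, by Proposition (4) and the standard Urysohn lemma).

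Next I would introduce, for a small parameter $\delta>0$ to be chosen later, the function $F=f_1+f_2+\delta h$, which is strictly positive on $K$, and define $g_j = f_j/F$ on the open set where $F>0$, extended by $0$ elsewhere; then $g_1,g_2\in C_c^+(E)$, $g_1+g_2\leq 1$ everywhere, and $g_j F = f_j$. The key point is that each $g_j$ is uniformly continuous in the sense of Lemma \ref{uniformcontinuity}: applying that lemma to $g_1$ and to $g_2$ on the compact set $supp(h)$ (which contains $K$) with tolerance $\epsilon'$, I obtain a neighbourhood $V$ of $e$ such that $|g_j(x)-g_j(y)|<\epsilon'$ whenever $x,y\in supp(h)$ and $x^{-1}y\in V$. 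Now suppose $\phi\in C_c^+(E)$ is nonzero with $supp(\phi)\subset V$, and write a majorisation $F\leq \sum_i c_i\phi_{u_i}$ coming close to realizing $(F;\phi)$. For each $i$, if $\phi(u_i x)\neq 0$ then $u_i x\in V$, so for any two such $x$ lying in $supp(h)$ the values $g_j(x)$ differ from $g_j(u_i^{-1})$ by at most $\epsilon'$; hence $f_j(x)=g_j(x)F(x)\leq \sum_i (c_i\,(g_j(u_i^{-1})+\epsilon'))\,\phi_{u_i}(x)$, which yields $(f_j;\phi)\leq \sum_i c_i\,g_j(u_i^{-1}) + \epsilon'\sum_i c_i$. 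Summing over $j=1,2$ and using $g_1+g_2\leq 1$ gives $(f_1;\phi)+(f_2;\phi)\leq \sum_i c_i + 2\epsilon'\sum_i c_i = (1+2\epsilon')\sum_i c_i$, and taking the infimum over the majorisations of $F$ shows $(f_1;\phi)+(f_2;\phi)\leq (1+2\epsilon')(F;\phi)$. Dividing through by $(g;\phi)$ converts this into $I_\phi(f_1)+I_\phi(f_2)\leq (1+2\epsilon')I_\phi(F)\leq (1+2\epsilon')\big(I_\phi(f_1+f_2)+\delta I_\phi(h)\big)$, using subadditivity and homogeneity from Lemma \ref{iphi}. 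Since the reverse inequality $I_\phi(f_1+f_2)\leq I_\phi(f_1)+I_\phi(f_2)$ is Lemma \ref{iphi}(iii), combining the two gives $|I_\phi(f_1)+I_\phi(f_2)-I_\phi(f_1+f_2)|\leq 2\epsilon' I_\phi(F) + (1+2\epsilon')\delta I_\phi(h)$. Finally, by Lemma \ref{iphi}(i) the quantities $I_\phi(F)$ and $I_\phi(h)$ are bounded above by the fixed constants $(F;g)$ and $(h;g)$ independently of $\phi$, so choosing $\epsilon'$ and $\delta$ small enough at the outset makes the right-hand side $\leq\epsilon$. Shrinking $V$ accordingly (it depends only on $f_1,f_2,\epsilon$, not on $\phi$) completes the proof.

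The only genuine obstacle here is the one already disposed of: in \cite{fell} one has a uniform structure on the group and near-additivity is a short consequence of uniform continuity of the $g_j$; in our setting $E$ is only known to be a topological space on which left translations are continuous (we are in the middle of proving it is a topological group), so a priori there is no uniform structure, and Lemma \ref{uniformcontinuity} is the replacement that makes the compactness/subsequence argument work purely topologically. Everything downstream of that lemma — the bookkeeping with $F$, $g_1$, $g_2$, and the majorisation — is routine and identical in spirit to \cite[Chapter 3, Section 7]{fell}, so I would state it compactly and refer the reader there for the parallel computations, emphasizing only the places where $V$ is extracted from Lemma \ref{uniformcontinuity} and where the $\phi$-independent bounds of Lemma \ref{iphi}(i) are invoked to absorb the error terms.
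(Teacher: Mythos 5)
Your proposal is correct and follows essentially the same route as the paper: perturb to $f_1+f_2+\delta\,(\text{strictly positive bump})$, form the ratios $f_j/F$, invoke Lemma \ref{uniformcontinuity} (the substitute for uniform continuity, which is the only non-routine ingredient here) to control these ratios near the translates $u_i^{-1}$, pass the majorisation of $F$ to majorisations of $f_1,f_2$, take the infimum, and absorb the error terms using the $\phi$-independent bounds from Lemma \ref{iphi}(i). The only differences are cosmetic (you keep the tolerance $\epsilon'$ and the perturbation size $\delta$ as separate parameters, and your bookkeeping in the final estimate is slightly cleaner than the paper's), so no further changes are needed.
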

The proof is similar to the proof of 
Lemma in  \cite[Chapter 3, 7.7]{fell}, only we use Lemma \ref{uniformcontinuity} in 
place of the lemma on uniform continuity available for a locally compact topological group. 
\begin{proof} 
Fix a non-zero function $f'\in C_c^+(E)$ which is
strictly positive on the $supp (f_1+f_2)$. We can find this function
because $E$ is locally compact and Hausdorff. Choose $\delta$ to be a small
positive number such that
	\begin{equation} 
	(f';g)\delta(1+2\delta)+2\delta(f_1+f_2;g) <\epsilon.
	\end{equation}
\vspace{5mm}
Now put $f=f_1+f_2+\delta f '$, and define
$$
h_i(x) = \begin{cases} \frac{f_i(x)}{f(x)} &\mbox{ if $
f(x) \neq 0 $}, \\ 
0 &\mbox{ otherwise}.
       \end{cases} 
$$
The functions $h_1,~h_2$ have compact support. 
By the left uniform continuity ({\it cf.} Lemma \ref{uniformcontinuity})
applied to $h_i, ~ i=1,2$, we can choose a neighbourhood $V$ of identity so that
\begin{equation} 
|h_i(x)-h_i(y)|<\delta ~\text{for} ~i=1,2\quad \textnormal{and }x^{-1}y\in V.
\end{equation}
We  can further assume by 
restricting to a smaller neighbourhood of identity $e\in E$ that  
multiplication and inverse maps are continuous on $V$.

Now let $\phi$ be any non-zero function in $C_c^+(E)$ with support
contained inside $V$.
We shall prove the lemma for $\phi$. By Lemma \ref{majorisation}, we can
find $0\leq c_j\in \mathbb
R$ and $u_j\in G$ such  that 
\[f\leq \underset{j=1}{\overset r\sum} c_j\phi_{u_j}.\]
 If $\phi(u_jx)\neq 0$, we have $u_jx\in V$. Therefore by Lemma \ref{uniformcontinuity}, 
\[|h_i(u_j^{-1})-h_i(x)|<\delta.\] 
Hence for $i=1,2$ and for every $x\in E$, we have
\[f_i(x)=h_i(x)f(x)\leq \underset{j=1}{\overset r\sum}
c_j\phi(u_jx)h_i(x)\leq \underset{j=1} {\overset
r\sum}c_j\phi(u_jx)(h_i(u_j^{-1})+\delta).\] 
This implies,
\[(f_i;\phi)\leq{\overset r{\underset{j=1}\sum}}c_j(h_i(u_j^{-1})+\delta), \quad i=1,2.\]
But $h_1+h_2\leq 1$ implies that
\[(f_1;\phi)+(f_2;\phi)\leq \underset{j=1}{\overset
r\sum}c_j(1+2\delta).\] 
Taking infimum over $\lbrace\sum c_j\rbrace$, we obtain
\[(f_1;\phi)+(f_2;\phi)\leq (1+2\delta)(f;\phi)\] 
Multiplying by $(g;\phi)^{-1}$, we get the
relation of relative integrals,
\[I_\phi(f_1)+I_\phi(f_2)\leq (1+2\delta)I_\phi(f).\]
By properties of $I_{\phi}$, and the choice  of $f'$, we see that
\[I_\phi(f_1)+I_\phi(f_2)\leq \delta I_\phi (f').\] 
Therefore we have,
\[ I_\phi(f_1)+I_\phi(f_2)\leq  I_\phi(f_1+f_2)+2\delta
I_\phi(f_1+f_2) +\delta (1+2\delta)I_\phi
(f')<I_\phi(f_1+f_2)+\epsilon.\] 
This proves the lemma.
\end{proof}

This completes the proof that $I_\phi$ is nearly additive, when $\phi$
has sufficiently small compact support. From Lemma \ref{iphi}
we also have 
\[(g;f)^{-1}\leq I_\phi(f)\leq (f;g)\]
whenever $g\neq 0$ and $f,\phi, g$ are compactly supported real valued
functions on $E$.

\begin{prop}
 There exists a non-zero left invariant integral on $E$.
\end{prop}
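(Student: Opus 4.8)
The plan is to carry out the classical Weil--Fell construction of Haar measure, namely to obtain the desired integral as a suitable limit of the approximate integrals $I_\phi$ as the support of $\phi$ shrinks down to the identity $e$. All the delicate work specific to our situation --- that $E$ carries only an a priori left-translation-continuous topology and is locally a product --- has already been absorbed into the uniform continuity Lemma \ref{uniformcontinuity} and the near-additivity lemma built on it; what remains is the standard compactness extraction, which I expect to present no serious difficulty.

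First I would fix the normalising function $g\in C_c^+(E)$, $g\neq 0$, once and for all, noting that $I_\phi(g)=1$ for every admissible $\phi$. For each non-zero $f\in C_c^+(E)$, Lemma \ref{iphi}(1) confines $I_\phi(f)$ to the compact interval $J_f=[(g;f)^{-1},(f;g)]$, independently of $\phi$. I form the Tychonoff product $X=\prod_f J_f$, the index $f$ running over non-zero elements of $C_c^+(E)$; each admissible $\phi$ (non-zero, compactly supported) yields a point $\mathbf I_\phi=(I_\phi(f))_f\in X$. For an open neighbourhood $V$ of $e$ let $K_V\subseteq X$ be the closure of $\{\mathbf I_\phi:\ \phi\in C_c^+(E),\ \phi\neq 0,\ \mathrm{supp}(\phi)\subseteq V\}$; this set is non-empty because $E$ is locally compact Hausdorff, and $K_{V_1}\cap\cdots\cap K_{V_n}\supseteq K_{V_1\cap\cdots\cap V_n}\neq\emptyset$, so the family $\{K_V\}_V$ has the finite intersection property. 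By compactness of $X$ there is a point $\mathbf I=(I(f))_f\in\bigcap_V K_V$.

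It then remains to check that $I$, extended to $C_c(E)$ by linearity, is the integral we want. Positive homogeneity and left invariance $I(f_x)=I(f)$ pass to $\mathbf I$ from Lemma \ref{iphi}(4),(2) since the coordinate projections $X\to J_f$ are continuous and each $\mathbf I_\phi$ has these properties; subadditivity passes from Lemma \ref{iphi}(3) in the same way. For genuine additivity, fix $f_1,f_2$ and $\epsilon>0$: the near-additivity lemma (together with the subadditivity of $I_\phi$) produces a neighbourhood $V$ of $e$ with $0\leq I_\phi(f_1)+I_\phi(f_2)-I_\phi(f_1+f_2)\leq\epsilon$ whenever $\mathrm{supp}(\phi)\subseteq V$; this is a closed condition on the three relevant coordinates, hence holds on $K_V$ and in particular at $\mathbf I$, and letting $\epsilon\to 0$ gives $I(f_1+f_2)=I(f_1)+I(f_2)$. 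Finally $I(g)=1>0$, so $I$ is non-zero. Thus $I$ is a non-zero, left-invariant, positive linear functional on $C_c(E)$, i.e.\ a non-zero left invariant integral on $E$; by the Riesz representation theorem this yields the left invariant Borel measure on $E$ used in the next subsection. (In the second countable setting one may alternatively replace the Tychonoff argument by a diagonal subsequence argument along a countable neighbourhood base $V_1\supseteq V_2\supseteq\cdots$ of $e$, choosing $\phi_n$ with $\mathrm{supp}(\phi_n)\subseteq V_n$ and extracting, by the uniform bounds of Lemma \ref{iphi}(1), a subsequence along which $I_{\phi_n}(f)$ converges for all $f$ in a suitable countable family; this is in the spirit of Remark \ref{lindel}.)
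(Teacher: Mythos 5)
Your proposal is correct and follows essentially the same route as the paper: both carry out the standard Weil--Fell compactness extraction in the Tychonoff product $\prod_f[(g;f)^{-1},(f;g)]$, the paper by passing to a convergent subnet of the $I_{\phi_i}$ and you by the equivalent finite-intersection-property argument on the closures $K_V$, with additivity in the limit supplied by the near-additivity lemma. Your write-up is in fact slightly more explicit than the paper's about why additivity (as opposed to mere subadditivity) survives the passage to the limit.
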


\begin{proof} 
For each $f\in C_c^+(E),~f\neq 0,$  let
$S_f=[(g;f)^{-1},(f;g)]$,  a  compact interval. Consider the
set $S=\underset{0\neq f\in C_c^+(E)}\prod S_f$ (with the cartesian
product topology). By Tychonoff's theorem $S$ is compact. Let $\lbrace
\phi_i\rbrace$ be a net of non-zero elements of $C_c^+(E)$ such that,
for each neighbourhood $V$ of identity, $supp(\phi_i)\subset V$ for
sufficiently large $i$.  By the properties of $I_{\phi_i}$, we know
that $I_{\phi_i}\in S$ for each $i$. Since $S$ is compact, we can
replace  $\lbrace\phi_i\rbrace$ by a subnet, and assume that
$I_{\phi_i}\ra I$ in $S$.  Putting $I(0)=0$, and passing to the
limit over $i$, we get  from  the properties of $I_{\phi_i}$ the following: 
 
\begin{enumerate}
 \item 
$(g;f)^{-1}\leq
I(f)\leq (f;g),\textnormal{ if }0\neq f\in C_c^+(E).$
\item
$ I(f_x)=I(f), \textnormal{ for all }f\in C_c^+(E);\textnormal{ for all }x\in E. $
\item
$ I(cf)=cI(f) \textnormal{for all } c\in\mathbb R_+;\textnormal{ and for all }f\in C_c^+(E). $
\item
$ I(f_1+f_2)=I(f_1)+I(f_2) \textnormal{ for all }f_1,f_2\in C_c^+(E).$
\end{enumerate}
 
Now any continuous function
$f:E\ra\mathbb C$ with compact support, can be written as 
$f=(f_1-f_2)+i(f_3-f_4)$, with each $f_i\in C_c^+(E)$.
Define
\begin{equation} 
I(f)=I(f_1)-I(f_2)+i(I(f_3)-I(f_4))
\end{equation}
\end{proof}
\begin{remark}
 This integral defines a left invariant measure on $E$,
Notice that the integral is positive, i.e., $I(f)>0$,
whenever $0\neq f\in C_c^+(E)$. Let $\mu$ be the Borel measure
on $E$, corresponding to the left invariant integral $I$ on $E$. If $x\in
G$, we see that \[\mu(W)=\mu(xW)\] where $W$ is a Borel subset of $E$
whose closure is compact. Further, if $K$ is any compact subset of $E$ then 
$\mu(K)<\infty$. 
\end{remark}

\subsection{Global argument}
We  now derive a consequence of the existence of a non-trivial left
invariant integral on $E$. We start with a general observation: 

\begin{lemma}
Let $G_1,~G_2$ be two groups, and  $f:G_1\ra G_2$ 
be a group homomorphism. Suppose that $G_1,~G_2$ 
are topological spaces, and $G_2$ satisfies Lindel\"of 
property. Assume further that there exist 
non-zero left invariant measures  $\mu_1$ (resp. $\mu_2$), on the 
Borel subalgebra of $G_1$ (resp. of  $G_2$ ). 

Let $f$ be measurable and $W\subset G_2$ be an arbitrary open subset.  Then 
\begin{itemize}
 \item 
The measure $\mu_2(W)> 0$. 
\item
If $f$ is surjective, the measure of the preimage $\mu_1(f^{-1}(W))> 0$.
\end{itemize}
\end{lemma}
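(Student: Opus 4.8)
The plan is to prove both assertions by exploiting left invariance of the measures together with a covering argument based on the Lindel\"of property. For the first assertion, let $W\subset G_2$ be a nonempty open set and fix $w\in W$; then $w^{-1}W$ is an open neighbourhood of the identity in $G_2$. Since $\mu_2$ is left invariant, $\mu_2(W)=\mu_2(w^{-1}W)$, so it suffices to show that every open neighbourhood $V$ of the identity has positive measure. The key observation is that the translates $\{gV : g\in G_2\}$ form an open cover of $G_2$; by the Lindel\"of property there is a countable subcover $\{g_nV\}_{n\geq 1}$. If $\mu_2(V)=0$, then by left invariance $\mu_2(g_nV)=0$ for every $n$, and by countable subadditivity $\mu_2(G_2)=\mu_2\bigl(\bigcup_n g_nV\bigr)=0$, contradicting the hypothesis that $\mu_2$ is nonzero. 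Hence $\mu_2(V)>0$ and therefore $\mu_2(W)>0$.

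For the second assertion, suppose $f$ is surjective and measurable, and let $W\subset G_2$ be open. Since $f$ is a surjective group homomorphism, the preimage $f^{-1}(W)$ is a nonempty measurable subset of $G_1$; I would like to run the same translation-and-cover argument on $G_1$. The point is that $f^{-1}(W)$ is \emph{translation-large} in $G_1$: because $f$ is a surjective homomorphism, for any $g\in G_1$ we have $g\,f^{-1}(W)=f^{-1}\bigl(f(g)W\bigr)$, so the family $\{g\,f^{-1}(W):g\in G_1\}$ equals $\{f^{-1}(hW):h\in G_2\}$. As in the first part, fixing $w\in W$, the sets $\{hw^{-1}W : h\in G_2\}$ cover $G_2$; by the Lindel\"of property of $G_2$ there is a countable subcollection $\{h_n w^{-1}W\}_{n\ge 1}$ covering $G_2$, and pulling back, the countably many sets $f^{-1}(h_n w^{-1}W)=\bigl(\text{any }g_n\text{ with }f(g_n)=h_nw^{-1}\bigr)\cdot f^{-1}(W)$ cover $G_1=f^{-1}(G_2)$. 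If $\mu_1\bigl(f^{-1}(W)\bigr)=0$, then by left invariance each translate $g_n f^{-1}(W)$ also has measure zero, and countable subadditivity forces $\mu_1(G_1)=0$, again contradicting nontriviality of $\mu_1$. Therefore $\mu_1\bigl(f^{-1}(W)\bigr)>0$.

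I expect the main subtlety to be bookkeeping the measurability and the translation identities for preimages: one must check that $f^{-1}(W)$ is genuinely measurable (which is exactly the hypothesis that $f$ is measurable, since $W$ is open hence Borel in $G_2$) and that the chosen lifts $g_n$ of $h_nw^{-1}$ exist (which is exactly surjectivity of $f$). Apart from these points the argument is a routine application of left invariance plus countable subadditivity against the Lindel\"of covering, exactly as in the proof that Haar measure of a nonempty open set is positive. The reason the Lindel\"of hypothesis is placed on $G_2$ rather than $G_1$ is precisely so that the covering can be produced downstairs and then pulled back through the surjection.
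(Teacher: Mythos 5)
Your proof is correct and follows essentially the same route as the paper's: cover $G_2$ by countably many left translates of $W$ using the Lindel\"of property, then use left invariance and countable subadditivity against the nontriviality of $\mu_2$, and for the second part lift the translating elements through the surjection $f$ and use the identity $g\,f^{-1}(W)=f^{-1}(f(g)W)$ to transport the cover to $G_1$. The only cosmetic difference is your preliminary reduction to a neighbourhood of the identity, which the paper skips by covering $G_2$ directly with translates of $W$ itself.
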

\begin{proof}
Since $W$ is open in $G_2$ and $G_2$ is Lindel\"of, there exist
countably many left translates 
$\lbrace{t_iW\rbrace}_{i\in {\mathbb  N}}$ which cover $G_2$. 
Since the measure is left invariant and
non-zero, it follows that the measure of $W$ is positive. 

Since $f$ is surjective, there exist elements $s_i\in G_1$, such that
$f(s_i)=t_i$. Since $f$ is measurable, the inverse image $f^{-1}(W)$
is a measurable subset of $G_1$.
 Since $\lbrace{t_iW\rbrace}_{i\in {\mathbb   N}}$ cover $G_2$, 
the collection $\lbrace{s_if^{-1}(W)\rbrace}_{i\in {\mathbb
    N}}$ covers $G_1$. Since the measure $\mu_1$ is non-zero on $G_1$, it follows that 
$\mu_1(f^{-1}(W))> 0$.

\end{proof}

We apply this global argument when  $E=G_1=G_2$ with $f=\imath_x$
inner conjugation by an element $x\in E$:

\begin{cor} \label{globalargument}
Let $E$ be as above and $\mu$ denote the left invariant measure constructed in the foregoing subsection. Let $W$ be an open subset of $E$ and $x$ an element of $E$.  Then 
\[ \mu(i_x^{-1}(W))>0.\]
\end{cor}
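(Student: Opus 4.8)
The plan is to apply the general measure-theoretic lemma immediately preceding the corollary to the special case $G_1 = G_2 = E$ and $f = \imath_x$, the inner conjugation by a fixed element $x \in E$. To do this I must check that all the hypotheses of that lemma are satisfied in this setting. First, $\imath_x \colon E \to E$ is a group homomorphism (indeed an automorphism of the abstract group $E$), and it is a \emph{bijection}, hence in particular surjective, so the second bullet of the lemma applies. Second, $\imath_x$ is Borel measurable: this was recorded in part (vi) of the Proposition listing the basic properties of $E$, which states that the group law and the inverse map on $E$ are Borel measurable, and hence so is inner conjugation by any element. Third, $E$ carries a non-zero left invariant Borel measure $\mu$ on its Borel $\sigma$-algebra: this is precisely the content of the preceding subsection, where $\mu$ is constructed from the left invariant integral $I$ via the Riesz representation theorem, and the accompanying remark notes that $\mu$ is positive on non-empty open sets with compact closure and finite on compact sets.

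The one remaining hypothesis to verify is that $E$ (playing the role of $G_2$) satisfies the Lindel\"of property. This follows from part (iv) of the Proposition, which asserts that $E$ is a locally compact, \emph{second countable}, Hausdorff space: any second countable space is Lindel\"of (every open cover has a countable subcover, obtained by selecting, for each point, a basic open set from a fixed countable base that is contained in some member of the cover, and then choosing one cover-member for each such basic set). So I would state: since $E$ is second countable it is Lindel\"of; since $\imath_x$ is a measurable surjective group homomorphism $E \to E$; and since $\mu$ is a non-zero left invariant Borel measure on $E$; the hypotheses of the preceding lemma hold with $G_1 = G_2 = E$, $f = \imath_x$, $\mu_1 = \mu_2 = \mu$.

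Applying the lemma to an arbitrary open subset $W \subseteq E$, the second conclusion of the lemma gives exactly $\mu(\imath_x^{-1}(W)) > 0$, which is the assertion of the corollary. (It may be worth noting that, $\imath_x$ being a bijection, the surjectivity hypothesis is automatic and one does not even need to invoke the maximal generality of the lemma; but invoking the lemma directly is cleanest.)

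There is essentially no obstacle here — the corollary is a formal specialization of the lemma, and the entire content is the bookkeeping of checking that the hypotheses transfer. The only point that requires a word of justification rather than a pure citation is the implication \emph{second countable $\Rightarrow$ Lindel\"of}, and even that is a standard fact from point-set topology. I would therefore write the proof in two or three sentences: recall that $E$ is second countable hence Lindel\"of (by part (iv) of the Proposition), recall that $\imath_x$ is a measurable group homomorphism of $E$ onto itself (by part (vi)), recall that $\mu$ is a non-zero left invariant Borel measure (constructed above), and then quote the preceding lemma to conclude $\mu(\imath_x^{-1}(W)) > 0$ for every open $W \subseteq E$.
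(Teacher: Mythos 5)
Your proposal is correct and is essentially identical to the paper's own argument: the corollary is obtained by specializing the preceding lemma to $G_1=G_2=E$, $f=\imath_x$, $\mu_1=\mu_2=\mu$, with the hypotheses (surjectivity of $\imath_x$, its Borel measurability, the Lindel\"of property via second countability, and the non-vanishing left invariant measure) supplied exactly as you indicate. No further comment is needed, beyond the tacit convention (shared with the paper) that $W$ is nonempty.
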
 

\subsection{Convolution}
The proof of Banach's theorem for locally compact groups 
proceeds by first showing that
 convolution of measurable functions satisfying suitable properties is continuous.
In our context, we can carry  out such an argument for measurable functions supported in a 
sufficiently small neighbourhood of identity in $E$. However, here we establish directly 
a statement that suffices for proving Theorem \ref{maintheorem}. The proof makes more use of symmetric subsets, has the advantage of simplifying the required arguments in our context by reducing the requirement of uniform continuity to Lemma \ref{uniformcontinuity}. 
The key proposition is the following:
\begin{prop}
 \label{keyproposition}
Let $M$ be a measurable, symmetric (i.e. $M=M^{-1}$) subset of $E$.
Suppose that $M\subset \pi^{-1} (U_2)$, for $U_2$ a symmetric relatively compact 
open neighbourhood of identity in $G$ such that the product of the closures 
 $\overline{U}_2\overline{U}_2\subset U_F$.  Assume that identity $e\in M$
and measure $\mu (M)$ is positive and finite.
 Then the set \[ MM=\lbrace xy\colon x\in M, y\in M\rbrace\]  
contains an open neighbourhood of identity in $E$. 
\end{prop}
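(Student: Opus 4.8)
The plan is to use the classical convolution trick from the proof of Banach's theorem, adapted to the partial topological-group structure on $E$. Because $M$ is symmetric, for $x \in E$ we have $x \in MM$ precisely when the translate $xM$ meets $M$, i.e. when $\mu(xM \cap M) $ could be positive; more usefully, consider the convolution $\psi := \chi_M * \chi_M$, where $\chi_M$ is the characteristic function of $M$ and convolution is taken with respect to the left invariant measure $\mu$ constructed above. Concretely $\psi(x) = \int_E \chi_M(y)\chi_M(y^{-1}x)\, d\mu(y) = \mu(M \cap xM^{-1}) = \mu(M \cap xM)$. The support of $\psi$ is contained in $MM$, and $\psi(e) = \mu(M) > 0$; so it suffices to show $\psi$ is continuous at the identity, for then $\{\psi > \mu(M)/2\}$ is an open neighbourhood of $e$ contained in $MM$.

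To prove continuity of $\psi$ at $e$, first I would reduce everything to a genuinely locally-compact-group situation. By hypothesis $M \subset \pi^{-1}(U_2)$ with $\overline{U_2}\,\overline{U_2} \subset U_F$, so all products of elements of $M$ happen inside the ``tubular'' region $\pi^{-1}(U_F)$ where, by the construction of the topology on $E$ and the continuity of $F$ on $U_F \times U_F$, the multiplication and inversion maps are continuous; thus on this region $E$ behaves like a genuine topological group, and in fact (using Proposition on properties of $E$, parts (iv)–(v)) $\pi^{-1}(\overline{U_2})$ is a locally compact, second countable space carrying the restriction of $\mu$, which is finite on it. Then $\psi(x) = \mu(M \cap xM)$ and I would estimate, for $x$ near $e$,
\[
|\psi(x) - \psi(e)| = |\mu(M \cap xM) - \mu(M)| \le \mu(M \setminus xM) + \mu(xM \setminus M) = \mu(M \,\triangle\, xM).
\]
By regularity of $\mu$ (inner/outer on the locally compact second countable space $\pi^{-1}(\overline{U_2})$), choose a continuous compactly supported $g \in C_c^+(E)$ with $\|\chi_M - g\|_{L^1(\mu)} < \varepsilon/3$; then $\mu(M \triangle xM) \le \|\chi_M - \chi_{xM}\|_1 \le 2\|\chi_M - g\|_1 + \|g - g_x\|_1$ where $g_x$ denotes the left translate. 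The middle $L^1$-distance of translates of the fixed function $g$ is small for $x$ close to $e$: this is exactly where Lemma \ref{uniformcontinuity} enters — it gives uniform continuity of $g$ along left translates over its compact support, hence $\|g - g_x\|_1 \to 0$ as $x \to e$ — and here one also uses that left translation by $x$ distorts $\mu$ boundedly on the relevant compact set (indeed $\mu$ is genuinely left invariant by $G$-elements, and for general $x \in E$ near $e$ one works inside $\pi^{-1}(\overline{U_2})$ where left translates of a fixed compact set stay in a fixed compact set). Putting the three pieces together gives $|\psi(x)-\psi(e)| < \varepsilon$ for $x$ in a neighbourhood of $e$.

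The main obstacle I anticipate is the bookkeeping around the left invariant measure $\mu$: it is invariant under translation by elements of $G$ (lifted via $\sigma$), but $M$ is a subset of $E$ and the relevant translates $xM$ are by elements $x \in E$, not $G$, so one cannot invoke strict left invariance directly. The way around this is precisely the containment hypothesis $\overline{U_2}\,\overline{U_2} \subset U_F$: it confines all the products $xy$, $x,y \in M$, to the region where $E$'s group operations are continuous and where one can write any $x \in E$ near $e$ as $x = \sigma(\pi(x))\cdot a$ with $a \in A$, reducing translation by $x$ to translation by an $A$-part (harmless, and measure-preserving up to the modular behaviour of $A$ which is trivial since $A$ is abelian locally compact) composed with translation by a $G$-element $\sigma(\pi(x))$ — and a careful choice shows these distort $\mu$ by a factor bounded near $e$. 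Once this localization is set up, the convolution argument goes through essentially verbatim from the locally compact case, with Lemma \ref{uniformcontinuity} supplying the only ingredient (uniform continuity) that was not already available.
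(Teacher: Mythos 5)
Your overall architecture coincides with the paper's: set $u(x)=\mu(M\cap xM)=\int_E\chi_M(y)\chi_M(x^{-1}y)\,d\mu(y)$, note $u(e)=\mu(M)>0$ and $\operatorname{supp}u\subset MM$, and reduce everything to continuity of $u$ at $e$ via an $L^1$-approximation of $\chi_M$ by some $g\in C_c^+(E)$ (Lusin) plus the uniform continuity Lemma \ref{uniformcontinuity}. But there is a genuine gap at the decisive step, namely the claim that $\|g-g_x\|_{L^1(\mu)}\to 0$ as $x\to e$ ``by Lemma \ref{uniformcontinuity}.'' That lemma controls $|g(z_1)-g(z_2)|$ when $z_1^{-1}z_2$ lies in a small neighbourhood $V$ of $e$. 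For the left translate you need $|g(y)-g(x^{-1}y)|$ to be small uniformly over $y$ in the compact support; the relevant increment is $y^{-1}(x^{-1}y)=y^{-1}x^{-1}y$, a \emph{conjugate} of $x^{-1}$. Since on $E$ only left translations are known to be continuous, and continuity of inner conjugation is precisely what Theorem \ref{maintheorem} (for which this proposition is the key input) is trying to establish, you cannot assert that $y^{-1}x^{-1}y\in V$ for $x$ near $e$ uniformly in $y$. In an honest locally compact group this step is standard; here it is circular. The paper evades exactly this trap by using the symmetry of $M$ to rewrite $\chi_M(x^{-1}y)=\chi_M(y^{-1}x)$, approximating $\chi_M$ by $\tilde f(z)=f(z^{-1})$, so that the variable $x$ sits on the \emph{right}: the increment between $y^{-1}x_n$ and $y^{-1}x_0$ is $(y^{-1}x_n)^{-1}(y^{-1}x_0)=x_n^{-1}x_0$, which Lemma \ref{uniformcontinuity} does control. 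Your argument needs this (or an equivalent) device inserted at the translation-continuity step; without it the proof does not close.

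A secondary remark: your ``main obstacle'' paragraph worries that $\mu$ is only invariant under translations by (lifts of) elements of $G$. This is a misreading of the construction: the functional $I$ satisfies $I(f_x)=I(f)$ for all $x\in E$, so $\mu$ is left invariant under the whole of $E$ and the proposed decomposition $x=\sigma(\pi(x))\cdot a$ is unnecessary. The real difficulty is not the invariance of $\mu$ but the direction of the translation in the uniform continuity estimate, as explained above.
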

\vspace{5mm}
Granting this proposition, we now prove Theorem \ref{maintheorem}.
\begin{proof}[Proof of Theorem \ref{maintheorem}]
 We need to show
that for any sufficiently small neighbourhood $V$ of identity in $E$
the set $\imath_x^{-1}(V)$ contains an open neighbourhood of identity $e\in E$. 
Let $W$ be a symmetric open neighbourhood of $e$ in $E$ satisfying  following
\begin{enumerate}
\item
$W$ is symmetric (i.e., $W=W^{-1}$)
\item
$W\subset \pi^{-1}(xU_2x^{-1})$
\item
$WW\subset V$

\end{enumerate}
Let $M'=\imath_x^{-1}(W)$. By Corollary \ref{globalargument},  $\mu (M')>0$.
Since  $W\subset \pi^{-1}(xU_2x^{-1})$, we have $\imath_x^{-1}(W)\subset \pi^{-1}(U_2)$.
Intersecting wih a symmetric compact set $K$ containing identity $e\in E$, we can assume
that $M=\imath_x^{-1}(W)\cap K$ has finite, positive measure, and is contained inside  $\pi^{-1}(U_2)$. By Proposition \ref{keyproposition},
we see that the product set $MM$  contains an open neighbourhood
$V'_x$ of $e$. Now
\[\imath_x^{-1}(V)\supseteq \imath_x^{-1}(WW)\supset MM\supset V'_x. \]
This proves Theorem \ref{maintheorem}.  
\end{proof}
\vspace{5mm}
We now proceed to the proof of Proposition \ref{keyproposition}.
For $x\in E$, define the function 
\[u(x)=\mu(M\cap xM).\]
The proof of the proposition  reduces to the following lemma:
\begin{lemma}
\label{keylemma}
 Under the hypothesis of Proposition \ref{keyproposition}, $u$ is  a 
continuous function. 
\end{lemma}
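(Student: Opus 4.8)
The plan is to show that $u(x) = \mu(M \cap xM)$ is continuous by exhibiting it as the evaluation of a left-invariant integral against translates of a genuinely continuous function, and then applying the uniform continuity result of Lemma \ref{uniformcontinuity}. First I would note that $u(x) = \int_E \chi_M(y)\,\chi_M(x^{-1}y)\,d\mu(y)$, i.e. $u(x) = \langle \chi_M, (\chi_M)_{x^{-1}}\rangle_{L^2(\mu)}$ where subscript denotes left translation and the pairing is with respect to the left-invariant measure $\mu$ constructed above. Since $M \subset \pi^{-1}(U_2)$ with $U_2$ relatively compact and $\mu(M)$ is finite, $\chi_M$ lies in $L^2(E,\mu)$ with support inside the relatively compact set $\pi^{-1}(\overline{U}_2)$; the constraint $\overline{U}_2\overline{U}_2 \subset U_F$ guarantees that all the products $x^{-1}y$ that arise (for $x,y$ in the relevant compact set) stay in the region where multiplication on $E$ is continuous, so translation is well-behaved there.

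The main step is to prove that translation $x \mapsto (\chi_M)_x$ is continuous from $E$ into $L^2(E,\mu)$ on the relevant compact neighbourhood; continuity of $u$ then follows from the Cauchy--Schwarz inequality, $|u(x) - u(x')| \le \|\chi_M\|_2\,\|(\chi_M)_{x^{-1}} - (\chi_M)_{x'^{-1}}\|_2$. To establish $L^2$-continuity of translation I would use the standard density argument: approximate $\chi_M$ in $L^2(\mu)$ by a function $f \in C_c(E)$ (possible since $E$ is locally compact Hausdorff and $\mu$ is a Borel measure finite on compacts, so $C_c(E)$ is dense in $L^2$), estimate $\|f_x - f_{x'}\|_2 = \|f_{x'} - f\|_2$ after a left translation by $x^{-1}$... wait --- here one must be careful because inner structure is exactly what we are trying to prove. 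Instead I would argue directly: for $f \in C_c(E)$ supported in the compact set $C = \pi^{-1}(\overline{U}_2)$, Lemma \ref{uniformcontinuity} gives, for any $\epsilon > 0$, a neighbourhood $V$ of $e$ with $|f(y) - f(z)| < \epsilon$ whenever $y,z \in C'$ (a slightly larger compact set) and $y^{-1}z \in V$; then for $x^{-1}x' \in V$ and $y$ ranging over a fixed compact set one has $(x^{-1}y)^{-1}(x'^{-1}y) = y^{-1}x x'^{-1} y$, so I would instead apply uniform continuity to the right translates, or equivalently replace $u(x)$ by $\int \chi_M(xy)\chi_M(y)\,d\mu(y)$ via left-invariance and translate the problem to continuity of $y \mapsto f(xy)$ uniformly in $y$ on a compact set --- which is precisely the content of Lemma \ref{uniformcontinuity} with the roles arranged so that $x^{-1}x'$ lies in a small symmetric neighbourhood.

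I expect the main obstacle to be bookkeeping: making sure every product and inverse that appears stays inside the neighbourhood where the group operations on $E$ are known to be continuous (that is the purpose of the hypotheses $\overline{U}_2\overline{U}_2 \subset U_F$ and $M \subset \pi^{-1}(U_2)$), so that the uniform-continuity lemma genuinely applies and the integrand $y \mapsto \chi_M(y)\chi_M(x^{-1}y)$ is supported in a fixed compact set as $x$ ranges over a small neighbourhood of $e$. Once the support is pinned down to a compact set of finite measure and the integrand is controlled in $L^\infty$, dominated convergence (or the $\epsilon$-estimate above) yields $|u(x) - u(e)| \to 0$, and by the same argument centred at an arbitrary point, $u$ is continuous everywhere on the relevant neighbourhood. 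The passage from $\chi_M$ to a $C_c$ approximant should be kept as a clean $L^1$ or $L^2$ estimate, $\mu(M \triangle M') < \delta \Rightarrow |u_M - u_{M'}| < \delta$, so that the continuous-function case is the only place Lemma \ref{uniformcontinuity} is invoked.
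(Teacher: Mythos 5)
Your overall strategy --- write $u(x)=\int_E\chi_M(y)\chi_M(x^{-1}y)\,d\mu(y)$, approximate $\chi_M$ by a compactly supported continuous function, and control the translates of that continuous function via Lemma \ref{uniformcontinuity} --- is the same as the paper's, and the $L^2$/Cauchy--Schwarz packaging versus the paper's $L^1$ estimates is immaterial. But there is a genuine gap at exactly the point you flag and then wave away. Lemma \ref{uniformcontinuity} is a \emph{left} uniform continuity statement: it controls $|f(z)-f(z')|$ when $z^{-1}z'$ is small. For the pair $(x_n^{-1}y,\,x_0^{-1}y)$ this quantity is the conjugate $y^{-1}x_nx_0^{-1}y$, as you note, and your proposed remedy --- substituting $y\mapsto xy$ by left invariance so as to compare $f(x_ny)$ with $f(x_0y)$ --- does not help: $(x_ny)^{-1}(x_0y)=y^{-1}x_n^{-1}x_0y$ is again a conjugate, so it is not ``precisely the content of Lemma \ref{uniformcontinuity}.'' A genuinely right-handed uniform continuity lemma (controlling $|f(z)-f(z')|$ for $zz'^{-1}$ small) would do the job, but proving it would require right translations, equivalently inner conjugations, to be continuous --- which is precisely Theorem \ref{maintheorem}, so that route is circular.

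The missing idea is the one the hypotheses of Proposition \ref{keyproposition} are set up for: use the symmetry $M=M^{-1}$ to rewrite $\chi_M(x^{-1}y)=\chi_M(y^{-1}x)$, and replace the continuous approximant $f$ by $\tilde f(z)=f(z^{-1})$, which is still continuous because the inverse map is continuous on $\pi^{-1}(U_F)$ and all supports stay there thanks to $\overline{U}_2\overline{U}_2\subset U_F$. Now the varying parameter sits on the right of the argument: $(y^{-1}x_n)^{-1}(y^{-1}x_0)=x_n^{-1}x_0$, and Lemma \ref{uniformcontinuity} applies directly to $\tilde f$. The symmetry of $M$ is also what identifies $\int|\chi_M(y^{-1})-\tilde f(y^{-1})|\,d\mu(y)$ with $\int|\chi_M(y)-f(y)|\,d\mu(y)$ in the error terms, since $\mu$ is only known to be left invariant, not inversion invariant. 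With this inversion trick inserted, the rest of your argument (compact supports, Lusin-type approximation, three-term splitting, dominated estimates) goes through essentially as in the paper.
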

\vspace{5mm}
Assuming Lemma \ref{keylemma} we now prove Proposition \ref{keyproposition}.
\begin{proof}[Proof of Proposition \ref{keyproposition}]
 If $u(x)\neq 0$. them $M\cap xM\neq \emptyset$. Hence $x\in MM^{-1}=MM$
as $M$ is assumed to be symmetric. Further, $u(e)=\mu(M)>0$. Since $u$ is
continuous, this proves  Proposition \ref{keyproposition}.
\end{proof}
\vspace{5mm}
We now proceed to the proof of Lemma \ref{keylemma}. 
\begin{proof}[Proof of Lemma \ref{keylemma}.]
 Let $\chi _M$ denote the characteristic function of $M$. Then $u(x)$ can
be defined by the following integral.
\[
\begin{split}
u(x)&=\mu(M\cap xM)\\
&=\underset{M\cap xM}{\int}d\mu(y)\\
&=\underset M{\int}\chi_M(x^{-1}y)d\mu(y)\\
&=\underset E{\int}\chi_M(y)\chi_M(x^{-1}y)d\mu(y).
\end{split} \]
We observe that support of $u$ is contained
 inside $MM\subseteq U_2U_2$. Since $M\subset \pi^{-1}(U_2)$, 
by Lusin's theorem choose 
a function $f\in C_C(E)$ with support contained inside  $\pi^{-1}(\overline{U}_2)$ such that
\[\underset E{\int}|\chi_M(y)-f(y)|~d\mu(y)<\epsilon_1, \]
for  some sufficiently small $\epsilon_1>0$.

Let $\lbrace x_n\rbrace_{n\in\mathbb N}$ be a sequence converging to
 $x_0$ in $\pi^{-1}(U_2U_2)$.
To show the continuity of $u$ restricted to $\pi^{-1}(U_2U_2)$, it is enough to show that
the sequence $\lbrace u(x_n)\rbrace$ converges to $u(x_0)$. We have 
\[|u(x_n)-u(x_0)|=\left|\underset E{\int}\chi_M(y)(\chi_M(x_n^{-1}y)-\chi_M(x_0^{-1}y))d\mu(y)\right|.\]
Since $M$ is symmetric, we also have  $\chi_M(y^{-1})=\chi_M(y)$. 
\vspace{5mm}
Therefore, 
\[|u(x_n)-u(x_0)|\leq \underset E{\int}\chi_M(y)|\chi_M(y^{-1}x_n)-\chi_M(y^{-1}x_0)|d\mu(y).\]
We have 
\[
\begin{split}
|u(x_n)-u(x_0)|& \leq \underset E{\int}\chi_M(y)|\chi_M(y^{-1}x_n)-\tilde{f}(y^{-1}x_n)|d\mu(y)\\
& +\underset E{\int}\chi_M(y)|\tilde{f}(y^{-1}x_n)-\tilde{f}(y^{-1}x_0)|d\mu(y)\\
&+  \underset E{\int} \chi_M(y)|\tilde{f}(y^{-1}x_0)-\chi_M(y^{-1}x_0)|d\mu(y), 
\end{split}
\]
where $\tilde{f}(z)=f(z^{-1})$ for $z\in E$. 
Since the integral is left invariant, by replacing $y$ by $x_ny$ (resp. by $x_0y$) 
in the first (resp. third) term on the right, we see that 
\[\begin{split}
\underset E{\int}\chi_M(y)|\chi_M(y^{-1}x_n)-\tilde{f}(y^{-1}x_n)|d\mu(y)&= \underset E{\int}\chi_M(x_ny)|\chi_M(y^{-1})-\tilde{f}(y^{-1})|d\mu(y)  \\
  &\leq \underset E{\int}|\chi_M(y^{-1})-\tilde{f}(y^{-1})|d\mu(y)\\
&= \underset E{\int}|\chi_M(y)-f(y)|d\mu(y)\\
& < \epsilon_1.
\end{split}
\]
Here we have used the fact that $M$ is symmetric and 
 definition of $\tilde{f}$.  Similarly, we obtain
\[\underset E{\int} \chi_M(y)|\tilde{f}(y^{-1}x_0)-\chi_M(y^{-1}x_0)|d\mu(y)<\epsilon_1.\]
Now we estimate the middle term.  
Since  inverse map is continuous in $\pi^{-1}(U_F)$ and 
 support of $f$ is contained inside  $\pi^{-1}(U_F)$, 
the function $\tilde{f}(y)=f(y^{-1})$ is continuous.
 Given $\epsilon_2>0$, by Lemma \ref{uniformcontinuity},
 there exists a symmetric neighbourhood $W_2$ contained
 inside $\pi^{-1}(U_F)$ (here again we are using  fact that
 inverse map is continuous on $\pi^{-1}(U_F)$) such that 
\[ |\tilde{f}(z_1)-\tilde{f}(z_2)|<\epsilon_2\quad \mbox{for} \quad z_1^{-1}z_2\in W_2.\]
Since $x_n$ converges to $x_0$, there exists 
a natural number $N$ such that for $n\geq N$, 
\[x_n\in x_0W_2. \quad i.e., \quad x_0^{-1}x_n\in W_2.\]
Since $W_2$ is symmetric, this condition can be rewritten as 
\[ x_n^{-1}x_0\in W_2.\]
Hence, 
\[ (y^{-1}x_n)^{-1}(y^{-1}x_0)= x_n^{-1}x_0\in W_2.\]
By applying  Lemma \ref{uniformcontinuity}  to  the continuous 
function $\tilde{f}$, we obtain
\[|\tilde{f}(y^{-1}x_n)-\tilde{f}(y^{-1}x_0)|\leq \epsilon_2,
\textnormal{ for } n\geq N.\]
 Hence for $n\geq N$, the middle term can be estimated as
\[ \underset E{\int}\chi_M(y)|\tilde{f}(y^{-1}x_n)-\tilde{f}(y^{-1}x_0)|d\mu(y)<
\epsilon_2\underset E{\int}\chi_M(y)<\epsilon_2\mu(M).
\]
Combining the above estimates, we obtain  
\[|u(x_n)-u(x_0)|<2\epsilon_1+\epsilon_2\mu(M)~\textnormal{ for all }n\geq N.\]
This establishes continuity of $u$ and hence proves Lemma \ref{keylemma}.
\end{proof}
\subsection{Comparison with other cohomology theories}
Suppose $G$ is a locally compact group acting on 
a locally compact group $A$. 
Then we have a natural map
\[ H^2_{lcm}(G,A)\rightarrow H^2_m(G,A).\]
As a corollary to Banach's theorem we show that the above map is injective: 
\begin{cor}
Let  $G, ~A$ be   locally compact, second countable groups.  Then the natural map
\[ H^2_{lcm}(G,A)\rightarrow H^2_m(G,A)\]
is injective.  
\end{cor}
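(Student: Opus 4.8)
The plan is to unwind the definitions and reduce injectivity to a Banach‑type regularity statement, and then to feed in the structure of the extension group already built above. Since $Z^2_{lcm}(G,A)\subset Z^2_m(G,A)$ and $B^2_{lcm}(G,A)\subset B^2_m(G,A)$, the natural map on $H^2$ is the one induced by the inclusion of cocycle groups, so its kernel is $\bigl(Z^2_{lcm}(G,A)\cap B^2_m(G,A)\bigr)/B^2_{lcm}(G,A)$. Thus it suffices to prove the following: if $F\in Z^2_{lcm}(G,A)$ is continuous on a neighbourhood $U_F\times U_F$ of the identity in $G\times G$ and $F=dg$ for some \emph{measurable} $g\colon G\to A$, then $g$ is automatically continuous in a neighbourhood of the identity, so that $g\in C^1_{lcm}(G,A)$ and hence $F\in B^2_{lcm}(G,A)$.

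First I would attach to $F$ the abstract extension $1\to A\to E\to G\to 1$ and equip $E$ with the topology having neighbourhood base $\mathcal B$ at the identity as in the preceding sections. By Theorem \ref{maintheorem} together with the proposition listing the basic properties of $E$, the group $E$ is a locally compact, second countable, Hausdorff topological group (in particular Polish), $A$ sits inside it as a closed subgroup, the projection $\pi\colon E\to G$ is continuous and open, on a neighbourhood of the identity the topology of $E$ coincides with the product topology of a neighbourhood $U_A\times U_G\subset A\times G$, and the Borel structure of $E$ is the product of the Borel structures of $A$ and $G$.

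Next, using the explicit formula $(a_1,s_1)(a_2,s_2)=(a_1+s_1\cdot a_2+F(s_1,s_2),\,s_1s_2)$ for the group law on $E$ together with the relation $dg=F$, I would check that the map $s_0\colon G\to E$, $s\mapsto(-g(s),s)$, is a group homomorphism with $\pi\circ s_0=\mathrm{id}_G$, i.e.\ an algebraic splitting of the extension; this is nothing but a rearrangement of the cocycle identity. Since $g$ is measurable and the Borel structure of $E$ is the product structure, $s_0$ is Borel measurable. It is therefore a measurable homomorphism between the locally compact second countable groups $G$ and $E$, and Banach's theorem — a measurable homomorphism of locally compact second countable groups is continuous, the very ingredient underlying Theorem \ref{maintheorem} — forces $s_0$ to be continuous. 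Restricting $s_0$ to a small enough neighbourhood of the identity, on which the topology of $E$ is the product topology of $U_A\times U_G$, and reading off the $A$-coordinate shows that $s\mapsto -g(s)$, and hence $g$, is continuous in a neighbourhood of the identity in $G$; as $g$ is measurable everywhere, $g\in C^1_{lcm}(G,A)$ with $dg=F$, whence $[F]=0$ in $H^2_{lcm}(G,A)$.

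The only delicate point — and the one where essentially all the work has already been done — is the appeal to Theorem \ref{maintheorem} and the accompanying proposition: we need $E$ to be a genuine locally compact second countable topological group so that Banach's theorem applies to $s_0$, and we need the Borel structure of $E$ to be the product structure so that measurability of $g$ transfers to measurability of $s_0$. Granting those inputs, the corollary is a formal consequence of Banach's theorem, entirely parallel to the already‑noted identity $H^1_m(G,A)=H^1_{lcm}(G,A)$ for locally compact second countable groups.
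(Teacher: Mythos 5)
Your proof is correct and follows essentially the same route as the paper's: construct the extension $E$ attached to the locally continuous measurable cocycle, invoke Theorem~\ref{maintheorem} to make $E$ a locally compact second countable topological group, observe that triviality in $H^2_m(G,A)$ produces a measurable homomorphic section, and apply Banach's theorem to upgrade it to a continuous one. Your closing step --- reading off the continuity of $g$ near the identity from the product topology on a tubular neighbourhood, so that $g\in C^1_{lcm}(G,A)$ and $F\in B^2_{lcm}(G,A)$ --- is just a more explicit rendering of the paper's terser conclusion that $E\simeq A\rtimes G$.
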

\begin{proof}
 Suppose a $2$-cohomology class $\underbar c$ in $H^2_{lcm}(G,A)$
is trivial in  $H^2_m(G,A)$. 
Construct the corresponding extension $E$
of $\underbar c$. Then $\underbar c=0$ in  $H^2_m(G,A)$, implies that 
there exists a measurable section $\sigma:G\rightarrow E$ which is a 
group homomorphism. By Theorem \ref{maintheorem}, we know that $E$ is locally compact. It can be seen that $E$ is also second countable. Hence by 
 Banach's theorem,   $\sigma$
is a continuous group homorphism, and this implies that the extension $E\simeq A\rtimes G$. 
\end{proof}

\begin{cor}
 Suppose that either of the following conditions hold:
\begin{enumerate}
 \item $G$ is a profinite group and $A$ is a discrete $G$-module. 

\item $G$ is a Lie group and $A$ is a finite dimensional vector space. 
\end{enumerate}
Then the natural maps, 
\[H^2_{cont}(G,A)\rightarrow  H^2_{lcm}(G,A)\rightarrow H^2_m(G,A),\]
are isomorphisms. 
\end{cor}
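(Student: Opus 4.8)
The plan is to handle the two maps together by means of a trivial diagram lemma: if $X\xrightarrow{f}Y\xrightarrow{g}Z$ are homomorphisms of abelian groups with $g$ injective and $g\circ f$ an isomorphism, then $f$ and $g$ are both isomorphisms (from $g\circ f$ injective, $f$ is injective; from $g\circ f$ surjective, $g$ is surjective; a bijective homomorphism is an isomorphism; and then $f=g^{-1}\circ(g\circ f)$). All the maps in question are induced by the inclusions of cochain complexes $C^\bullet_{cont}\subset C^\bullet_{lcm}\subset C^\bullet_m$, so the composite $H^2_{cont}(G,A)\to H^2_{lcm}(G,A)\to H^2_m(G,A)$ is just the natural Moore comparison map $H^2_{cont}(G,A)\to H^2_m(G,A)$. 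Thus it suffices to prove: (i) this composite is an isomorphism, and (ii) $H^2_{lcm}(G,A)\to H^2_m(G,A)$ is injective.

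Statement (ii) is the corollary just proved. (In case (1), if the profinite group $G$ or the discrete module $A$ fails to be second countable, one first reduces to the second countable case by writing $G=\varprojlim G/N$ over open normal $N$ and $A=\varinjlim A^N$ and checking that all three cohomology theories are compatible with these limits; I expect this to be routine.) For statement (i): in case (1), with $G$ profinite and $A$ discrete, the composite $H^2_{cont}(G,A)\to H^2_m(G,A)$ is precisely the classical comparison isomorphism recalled in the Introduction, where Moore cohomology agrees with van Est continuous cohomology. In case (2), with $G$ a Lie group and $A$ a finite dimensional real vector space, I would invoke the corresponding comparison statement identifying Moore's measurable cohomology of a Lie group with its continuous (equivalently differentiable) cohomology in finite dimensional coefficients; this is the one genuine external input the argument needs, and pinning it down precisely is the main obstacle. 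Granting (i) and (ii), the diagram lemma gives the corollary.

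For readers who prefer to avoid the external comparison theorem, case (2) can also be carried out entirely with the tools of this paper. By Mackey's theorem any topological extension $E$ of $G$ by $A$ is locally compact; since $G$ and $A$ are finite dimensional so is $E$, and Mostert's theorem then yields a local section, so \emph{every} topological extension of $G$ by $A$ is locally split. In view of Moore's parametrization of $H^2_m(G,A)$ by topological extensions and Theorem \ref{lcm}, this shows $H^2_{lcm}(G,A)\to H^2_m(G,A)$ is surjective, hence (with (ii)) an isomorphism. For the remaining map, a locally split extension of the Lie group $G$ by $A=\mathbb{R}^n$ is a locally trivial principal $A$-bundle over the paracompact manifold $G$, and since $A$ is contractible it is trivial (glue local sections by a partition of unity, using the affine structure on the fibres); a global continuous section $s$ then gives a continuous $2$-cocycle $(x,y)\mapsto\imath^{-1}(s(x)s(y)s(xy)^{-1})$ in the prescribed class, so $H^2_{cont}(G,A)\to H^2_{lcm}(G,A)$ is surjective. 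Its injectivity follows once more from Theorem \ref{lcm}: a continuous cocycle trivial in $H^2_{lcm}$ has extension isomorphic, as a topological extension, to $A\rtimes G$, and pulling back the canonical section of $A\rtimes G$ produces a continuous homomorphic splitting, i.e. a continuous $1$-cochain trivializing the cocycle.
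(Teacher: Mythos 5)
Your primary argument is exactly the paper's proof: the paper likewise deduces the corollary from the injectivity of $H^2_{lcm}(G,A)\to H^2_m(G,A)$ established in the preceding corollary together with the known isomorphism $H^2_{cont}(G,A)\to H^2_m(G,A)$ in these two cases (cited from Moore), closed off by the same two-out-of-three diagram lemma. The external input you were worried about pinning down in case (2) is precisely what the paper takes from Moore's work, so nothing further is needed there. Your closing alternative for case (2) --- using Mackey's local compactness theorem and Mostert's local cross-section theorem to see that every topological extension is locally split (giving surjectivity of $H^2_{lcm}\to H^2_m$), and then trivializing the resulting principal $\mathbb{R}^n$-bundle over the paracompact base $G$ to produce a global continuous section (giving surjectivity of $H^2_{cont}\to H^2_{lcm}$) --- is a genuinely different, essentially self-contained route that the paper does not take; it trades the citation of Moore's comparison theorem for Mackey--Mostert plus elementary bundle theory, at the cost of applying only to case (2). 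Your parenthetical caveat about second countability in case (1) is also well taken, since the injectivity corollary is stated only for second countable groups and the paper passes over this point silently.
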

\begin{proof} This follows from the previous corollary and the isomorphism 
 \[ H^2_{cont}(G,A)\rightarrow H^2_m(G,A)\]
for the given cases ({\it cf.} \cite{moore3}). 
\end{proof}

\section{Cohomology theory for Lie groups} 
In this chapter, we work in the smooth category in
the context of Lie groups $G,~A$ with smooth actions $G\times A\rightarrow A$.
Here we can define an analogous cohomology theory 
$H^n_{lsm}(G,A)$ where we impose 
the condition that the measurable cochains are locally smooth and
study some of its basic properties. We show in the context of Lie groups 
that the second locally smooth measurable cohomology group $H^2_{lsm}(G,A)$
parametrizes the collection of locally split extensions of $G$ by $A$. Further  
we observe as a corollary to the solution of Hilbert's Fifth problem and 
Theorem \ref{maintheorem}, an isomorphism of $H^2_{lcm}(G,A)$ with 
$H^2_{lsm}(G,A)$. 

Let $G$ be a Lie group and $A$ be a smooth $G$-module.
Analogous to  the construction in the continuous case,  we form a cochain
complex  $\lbrace C^n_{lsm}(G,A),d^n\rbrace_{n\geq 0}$. This starts with
$C^0_{lsm}(G,A)=A$, and for higher  $n, ~C^n_{lsm}(G,A)$ is the group
of all measurable functions $f\colon G^n\ra A$, which are smooth
around identity. It is easily checked that the standard coboundary operator
restricts to define a cochain complex. 

\begin{defn} 
The locally smooth cohomology theory 
 $\lbrace H^n_{lsm}(G,A)\rbrace_{n\geq 0}$ is defined as the  
cohomology groups of the cochain complex 
$\lbrace  C^n_{lsm}(G,A);d^n\rbrace_{n\geq 0}$.
\end{defn}
It is clear that there exists a map $ H^n_{lsm}(G,A'')\rightarrow H^n_{lcm}(G,A'')$.
It is easy to establish the following properties of  $H^n_{lsm}(G,A)$.
\begin{prop}
 \begin{enumerate}
  \item
   $H^0_{lsm}(G,A)=A^G.$
  \item
  The first cohomology group $H^1_{lsm}(G,A)$ is the group 
of all smooth crossed homomorphisms from $G$ to $A$.
  \item
     Given a short exact sequence of $G$-modules 
\[0\rightarrow A\overset\imath{\rightarrow }A\overset\jmath{\rightarrow}A\rightarrow 0,\]
there is a long exact sequence of cohomology groups
\[\cdots\rightarrow H^i_{lsm}(G,A')\rightarrow H^i_{lsm}(G,A)\rightarrow H^i_{lsm}(G,A'')\overset\delta
{\rightarrow} H^{i+1}_{lsm}(G,A)\rightarrow\cdots.\]
 \end{enumerate}
\end{prop}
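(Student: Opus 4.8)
The plan is to derive all three items exactly as in the locally continuous development of Section~\ref{chapter-false-tate}, replacing ``continuous'' by ``smooth'' throughout. The only non-formal inputs are that group multiplication on $G$, the action map $G\times A\to A$, and addition on $A$ are smooth: this is what guarantees that the standard coboundary formula carries $C^n_{lsm}(G,A)$ into $C^{n+1}_{lsm}(G,A)$, so that $\lbrace C^n_{lsm}(G,A),d^n\rbrace$ really is a cochain complex. Item (1) is then immediate: $C^0_{lsm}(G,A)=A$ and $d^0\colon A\to C^1_{lsm}(G,A)$ sends $a$ to the map $s\mapsto s\cdot a-a$, which is smooth on all of $G$ because the action is smooth; its kernel is $A^G$ by definition of the invariants and $B^0_{lsm}(G,A)=0$, whence $H^0_{lsm}(G,A)=A^G$.

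For item (2) I would run the translation argument already used for $Z^1_{lcm}(G,A)$. A locally smooth measurable $1$-cocycle is a measurable $c\colon G\to A$ satisfying $c(s_1s_2)=s_1\cdot c(s_2)+c(s_1)$ and smooth on some open $U\ni e$. For arbitrary $x\in G$ the cocycle identity gives $c(xs)=x\cdot c(s)+c(x)$ for every $s$; restricting to $s\in U$ and using that $a\mapsto x\cdot a$ and $a\mapsto a+c(x)$ are smooth self-diffeomorphisms of $A$ shows $c$ is smooth on $xU$, and since these translates cover $G$, $c$ is globally smooth. Conversely every smooth crossed homomorphism is a locally smooth measurable $1$-cocycle, so $Z^1_{lsm}(G,A)$ is precisely the group of smooth crossed homomorphisms (the principal ones $s\mapsto s\cdot a-a$ in $B^1_{lsm}(G,A)$ being themselves smooth), which gives (2).

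For item (3), whose hypothesis must, as in the locally continuous case, be that the sequence is locally split, the first step is the smooth analogue of Lemma~\ref{extendsection}: a locally split short exact sequence $0\to A'\to A\to A''\to 0$ of smooth $G$-modules admits a measurable section $\sigma\colon A''\to A$ that is smooth in a neighbourhood of identity. I would prove this by the same Zorn's-lemma-and-patching argument as Lemma~\ref{extendsection}, noting that patching a section smooth near $e$ with translates of local smooth sections of $A\to A''$ yields a measurable section that need only remain smooth near $e$. Given $\sigma$, I would then check that $0\to C^\bullet_{lsm}(G,A')\to C^\bullet_{lsm}(G,A)\to C^\bullet_{lsm}(G,A'')\to 0$ is an exact sequence of complexes: injectivity on the left is clear; exactness in the middle uses that $A'\hookrightarrow A$ is a closed embedding of Lie groups, so a locally smooth measurable cochain into $A$ landing in $A'$ is locally smooth into $A'$; and surjectivity on the right is obtained by lifting $f''\in C^n_{lsm}(G,A'')$ to $\tilde\sigma\circ f''$, where $\tilde\sigma$ is $\sigma$ translated so as to be smooth near the value $f''(e,\dots,e)$, so that $\tilde\sigma\circ f''$ is measurable and smooth near $(e,\dots,e)$ and has image $f''$. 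The long exact sequence, with connecting map $\delta(s)=d(\sigma\circ s)$, then follows from the usual homological algebra exactly as in the $H^\bullet_{lcm}$ case.

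Everything here is essentially routine; the step I expect to require the most attention is item (3) --- specifically the smooth version of Lemma~\ref{extendsection}, and within the exactness verification the middle-exactness point (which hinges on $A'\hookrightarrow A$ being a closed submanifold of the Lie group $A$) and the surjectivity point (where the section must be translated before composing, so that a cochain smooth near identity lifts to one smooth near identity rather than merely a measurable one).
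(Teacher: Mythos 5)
Your proposal is correct and follows essentially the same route as the paper, which simply invokes the smooth analogue of Lemma \ref{extendsection} and says ``arguing as before'' with reference to the locally continuous case; your write-up fills in the details of that reference faithfully (the translation argument for $Z^1$, the smooth extension of the local section, and the exactness of the cochain complexes). Your observation that the sequence in (iii) must be taken to be locally split (with a smooth local section) matches the paper's own proof, and the care you take in translating $\sigma$ before composing with $f''$ is a legitimate refinement of a point the paper glosses over.
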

\begin{proof}
The proof of (ii) follows from the smoothness at identity and the cocycle condition. 
For (iii), given a short exact sequence of $G$-modules, 
it follows from the property that the sequence is locally split
(i.e.  $\jmath$ admits a smooth local section). 
We extend the section to a locally smooth measurable
section $\sigma:A''\rightarrow A$. Arguing as before, we obtain the long 
exact sequence of cohomology groups. 
\end{proof}
\begin{remark}
We can introduce an analogous cohomology theory in the holomorphic context based on 
measurable cochains which are holomorphic in a neighbourhood of identity. In this context, 
we observe that the first cohomology group 
$H^1_{lhm}(G,A)$ is the space of all holomorphic crossed homomorphisms from $G$ to $A$ (see Proposition 
\ref{firstholom}). Since the closed graph theorem is not applicable in the holomorphic context, we cannot obtain 
this result from the smooth version by an application of  arguments as above. 
\end{remark}

\subsection{Extensions of Lie groups}
We describe now the  second cohomology group.
\begin{theorem}
 \label{lsm-Lie}
Let $G$ be a Lie group and $A$ be a smooth $G$-module. Then 
the  second cohomology group $H^2_{lsm}(G,A)$  parametrizes equivalence
classes of extensions  $E$ of $G$ by $A$,
 \[1\rightarrow A\overset\imath{\rightarrow} E\overset\pi{\rightarrow}G\rightarrow 0,\]
where $E$ is a Lie group with a measurable cross section $\sigma:G\rightarrow E$ such
that $\sigma$ is smooth around identity in $G$. 
\end{theorem}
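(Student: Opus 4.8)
The plan is to mimic the structure of the proof of Theorem \ref{lcm} (via Theorem \ref{maintheorem}), upgrading ``continuous'' to ``smooth'' everywhere and then invoking the solution of Hilbert's fifth problem to recover a genuine Lie group structure on $E$. First I would treat the assignment from extensions to cohomology classes: given an extension $E$ as in the statement, with a measurable section $\sigma\colon G\to E$ that is smooth near identity (such sections exist by the locally split hypothesis together with Lemma \ref{extendsection}, whose proof applies verbatim with ``smooth'' in place of ``continuous''), set $f_\sigma(s_1,s_2)=\sigma(s_1)\sigma(s_2)\sigma(s_1s_2)^{-1}$. Since the group operations on $E$ are smooth and $\sigma$ is smooth on a neighbourhood of identity, $f_\sigma$ is a measurable $2$-cocycle smooth near identity in $G\times G$, hence represents a class in $H^2_{lsm}(G,A)$; changing the section alters $f_\sigma$ by $d(\sigma\tau^{-1})$ with $\sigma\tau^{-1}\in C^1_{lsm}(G,A)$, so the class is well defined. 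Conversely, starting from a locally smooth measurable $2$-cocycle $F$, form the abstract extension $E$ and topologize it exactly as in Section 4, using the filter base $\mathcal B$ of sets $U_A\times U_G$ with $F|_{U_G\times U_G}$ smooth; Theorem \ref{maintheorem} shows $E$ is a topological group, and Theorem \ref{lcm} shows it is locally compact and second countable.

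The new ingredient is to promote this topological group to a Lie group. Here I would use that $E$ is a locally compact, second countable group which is locally (on a neighbourhood of identity) homeomorphic to $U_A\times U_G$ via the smooth $2$-cocycle $F$, so that the group operations of $E$ are smooth in that chart; in particular $E$ has no small subgroups. By the Gleason--Montgomery--Zippin solution of Hilbert's fifth problem, a locally compact group with no small subgroups is a Lie group, and its (unique) analytic structure must agree with the one already given on the tubular neighbourhood since analytic structures compatible with the topology and a given local smooth chart are unique. With $E$ a Lie group, the tautological section $g\mapsto (0,g)$ on $U_G$ is smooth, and by Lemma \ref{extendsection} it extends to a measurable section $G\to E$ smooth near identity. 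Thus $E$ is an object of the required type, and the two constructions are mutually inverse up to isomorphism by the same bookkeeping as in Theorem \ref{lcm}: equivalent extensions differ by a smooth-near-identity $1$-cochain, and cohomologous cocycles yield isomorphic extensions via the map $(a,g)\mapsto (a+\beta(g),g)$ for $\beta\in C^1_{lsm}(G,A)$.

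The main obstacle will be justifying that $E$, topologized as above, genuinely has no small subgroups and hence is a Lie group with the expected analytic structure. The point is subtle because the Lie structures on $G$ and $A$ enter only through the chart given by the smooth cocycle $F$ near identity; one must check that this local smooth structure is consistent with the group multiplication globally enough to apply Hilbert's fifth problem, and that smoothness of the section ``around identity'' suffices to make the resulting analytic structure on $E$ restrict correctly to $A$ and project correctly to $G$. Once the no-small-subgroups property is in hand, the remaining arguments are routine adaptations of Section 4 and of Moore's bookkeeping, so the heart of the matter is this transfer of the local smooth chart through Gleason--Montgomery--Zippin. (It is worth noting that the subsequent comparison theorem $H^2_{lsm}(G,A)\cong H^2_{lcm}(G,A)$ is then immediate from this description together with Theorem \ref{lcm}, since both sides parametrize the same locally split extensions.)
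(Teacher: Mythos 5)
Your forward direction matches the paper's, but your converse takes a genuinely different route, and the paper deliberately avoids it: the paper's proof of Theorem \ref{lsm-Lie} does not pass through Theorem \ref{maintheorem} and Hilbert's fifth problem. Instead it builds the smooth structure on $E$ directly: first on $E^0=\pi^{-1}(G^0)$, by taking the product chart $A\times U_G$ on $\pi^{-1}(U_G)$ and left-translating it, checking that transition maps are smooth via the smoothness of the triple product $(x,y,z)\mapsto xyz$ near identity; then globalizing multiplication by the identity $(xz)(yz')=(xy)(y^{-1}zy)z'$ together with the fact that $\pi^{-1}(U_G)$ generates $E^0$; and finally passing from $E^0$ to all of $E$ using measurability of the cocycle plus Banach's theorem to make inner conjugations continuous, hence smooth by a closed-graph argument. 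Hilbert's fifth problem is reserved for the subsequent comparison theorem $H^2_{lsm}\cong H^2_{lcm}$, where one genuinely starts from a cocycle that is only locally continuous.

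The step you yourself flag as the main obstacle is a genuine gap, and it is not closed by your appeal to uniqueness of analytic structures. What is true is that a locally compact group carries at most one Lie group structure compatible with its topology; it does not follow that this structure agrees with an arbitrarily prescribed local chart. Agreement with the product chart $U_A\times U_G$ is exactly what you need in order to know that the tautological section $g\mapsto(0,g)$ is smooth near identity, and hence that the difference $\beta=\sigma\tau^{-1}$ between it and the smooth local section produced by the implicit function theorem lies in $C^1_{lsm}(G,A)$. Without this, you only obtain $[F_\tau]=[F]$ in $H^2_{lcm}$ or $H^2_m$, not in $H^2_{lsm}$, so the round trip cocycle $\to$ extension $\to$ cocycle need not be the identity on $H^2_{lsm}(G,A)$. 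Proving that the product chart does extend to a global Lie group structure is precisely the content of the paper's direct atlas construction, so your route, once completed honestly, collapses back into it. (Your justification of the no-small-subgroups property is also a non sequitur as stated --- smoothness of the operations in one chart does not by itself give NSS --- although the fact is easy to repair: a subgroup contained in $U_A\times U_G$ projects to a subgroup of $G$ inside $U_G$, hence is trivial by NSS for $G$, and then sits inside $A\cap U_A$, hence is trivial by NSS for $A$.)
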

\begin{proof}
Given an extension $E$ of $G$ by $A$ with  a locally smooth measurable cross section
 $\sigma:G\rightarrow E$, we assign to it the  $2$-cohomology
 class of $F_\sigma :G\times G\rightarrow A\in Z^2_{lcm}(G,A)$ 
that takes $(s_1,s_2)\in G\times G$
to $F_\sigma(s_1)F_\sigma(s_2)F_\sigma(s_1s_2)^{-1}$.

For proving  converse, 
we take an arbitrary cohomology class $\bar F\in H^2_{lsm}(G,A)$. Choose
a representative $F$ and construct an abtract group extension $E$ 
\[1\rightarrow A\overset\imath\rightarrow  E\overset\pi\rightarrow G\rightarrow 1 .\]
Suppose $G^0$ is the connected component of identity in $G$, 
we claim that the  subgroup $E^0:=\pi^{-1}(G^0)\prec E$  is a Lie group.
Since $G^0$ is a normal subgroup of $G$ and $\pi$ is surjective, 
the subgroup $E^0$ is normal in $E$.
We shall first verify that  $E^0$  is a Lie group
(we remark here that we can carry out a similar argument in the 
continuous case to show directly that $E^0$ is a topological group, 
instead of using  Theorem \ref{maintheorem}). We then use the
 measurability condition to show that
 the extension $E$ is a  Lie group.

Since the cocycle $F$ is smooth in a neighbourhood of identity, 
we assume that $U_G$ is sufficiently small  so that the following holds: 
\begin{itemize}
 \item The product map
\[ (x, y, z)\mapsto xyz \]
is smooth from $U_G\times U_G\times U_G$ to $U_F$. This can be ensured by assuming that 
the  following functions are smooth on $U_G\times U_G\times U_G$:
\begin{equation}
 \label{multiplyU_G}
(s_1,s_2,s_3)\mapsto F(s_1s_2, s_3) \quad \mbox{and}\quad (s_1,s_2,s_3)\mapsto F(s_1,s_2s_3).
\end{equation}
\item 
The map $s\mapsto s^{-1}$ is smooth from $\pi^{-1}(U_G)$ to $\pi^{-1}(U_G)$ (here we have assumed
$U_G$ is symmetric). 
\end{itemize}
We define an atlas on $E^0$ by imposing that left translations 
are diffeomorphisms and imposing the 
product of smooth structure on 
$\pi^{-1}(U_G)\simeq A\times U_G$, i.e., the atlas consists of 
 $(xU, \phi\circ L_{x^{-1}})$, where $x\in E^0$ and $U$ is 
an open subset of $\pi^{-1}(U_G)$.
Here $(U, \phi) $ is a part of the atlas for the product
 smooth structure on  $\pi^{-1}(U_G)$. 

We first claim that this gives us an atlas:
 suppose there exists elements $x, ~y\in E^0$ and open sets 
$U, ~V$ contained inside $\pi^{-1}(U_G)$
 such that $xU\cap yV\neq \emptyset$.
 By taking the union of $U$ and $V$,
 we can assume that $U=V$. We have the charts,
\[ xU \xrightarrow{L_{x^{-1}}}U \xrightarrow{\phi} W\]
\[ yU \xrightarrow{L_{y^{-1}}}U \xrightarrow{\phi} W\]
where $W$ is an open subset in some Euclidean space.
 Let $V=U\cap x^{-1}yU$ be the image of $L_{x^{-1}}(xU\cap yU)$. 
We need to show that the map,
\[ \phi\circ L_{y^{-1}}\circ L_x\circ \phi^{-1}: \phi(V)\to W\textnormal{ is smooth.}\]
For this it is enough to show that 
\[ L_{y^{-1}x}: V\to U \textnormal{ is smooth.}\] 
The hypothesis implies that there exists elements
 $z, ~z'$ in $U$ such that $xz=yz'$, i.e., $y^{-1}x=z'z^{-1}$. 
This implies that 
\[y^{-1}x \in \pi^{-1}(U_G)\times \pi^{-1}(U_G).\]
Hence the required smoothness follows from the
 assumption that the triple product is smooth from
 $\pi^{-1}(U_G)\times \pi^{-1}(U_G)\times \pi^{-1}(U_G)$ 
to $\pi^{-1}(U_F)$. 
This concludes the proof that $E^0$ with the above atlas
 is a smooth manifold. We remark that the manifold structure
 is such that left translations are diffeomorphisms.  

We now have to show that $E^0$ is a Lie group.
 For this we first observe that inner conjugation
 by any element $x\in E^0$ is smooth at identity.
 Since $G^0$ is a connected Lie group, the neighbourhood $U_G$ 
generates $G$ as a group. It follows that 
the  group $E^0$ is generated by $\pi^{-1}(U_G)$. 
Hence any element $x\in E^0$ can be written as 
\[ x=x_1\cdots x_r,\textnormal{ where each }x_i\in \pi^{-1}(U_G).\] 
By our choice of $U_G$, inner conjugation by any
 $x_i\in \pi^{-1}(U_G)$ is smooth at identity. Since 
the inner conjugation by $x$ is a composite of
 inner conjugations by the elements $x_i$,  it follows that 
inner conjugation by any element of $x\in E^0$ is smooth at identity. 

We now show that the multiplication map $E^0\times E^0\to E^0$ is smooth. Suppose $x, ~y\in E^0$. Let $U$ be a sufficiently small neighbourhood of identity in $E$ such that the conjugation map $z\mapsto y^{-1}zy$ is smooth where $z\in U$. Now the multiplication map $xU\times yU$ can be written as, 
\[ (xz)(yz')=(xy)(y^{-1}zy)z' \quad z, ~z'\in U.\]
We can assume that $U, ~y^{-1}Uy\subset \pi^{-1}(U_G)$. Since left multiplication by $xy$ is smooth, and multiplication is smooth on  $\pi^{-1}(U_G)\times \pi^{-1}(U_G)$, we conclude that multiplication is a smooth
map from $E^0\times E^0$ to $E^0$. 

Similarly, to show that the inverse map is smooth
 on $E^0$, say around $x\in E^0$, we take  $U$ to be a  
sufficiently small neighbourhood of identity in $E^0$ such that
 $z\mapsto x z^{-1}x^{-1}, ~z\in U$ is smooth on $U$. 
(we use the fact that inverse map is smooth 
on $\pi^{-1}(U_G)$ and assume that $U\subset \pi^{-1}(U_G)$). Now,
\[ (xz)^{-1}=x^{-1}(xz^{-1}x^{-1}), \quad z\in U.\]
As left translations are smooth, it follows that the 
inverse map is smooth on $E^0$. This concludes 
the proof that $E^0$ is a Lie group. 

\begin{remark}
We remark again out here, that the above arguments did not require to 
start with that $E$ or $E^0$ is a topological group.
 The above arguments, carried out in the continuous category,
 will  directly yield that $E^0$ is a topological group. We have only
 used the fact that any neighbourhood of identity in $G$ generates $G$ as 
a group and that cocyles are locally regular (locally regular means locally continuous
or locally smooth depending on the setting).  
\end{remark}
Now we want to conclude that $E$ is a Lie group. For 
this, we first show that $E$ is a topological group.
Since the cocycle is measurable, we see that inner conjugation 
$i_x$ by any element $x\in E$ is a measurable automorphism of $E^0$. 
By Banach's theorem, it follows that $i_x$ is continuous on $E^0$. 
(in particular it follows that $E$ is a topological group). 

Since $\imath_x$ is continuous on $E^0$, the graph of $\imath_x$ is 
closed in $E^0\times E^0$. 
Therefore, the graph of $\imath_x$
is a closed subgroup of the Lie group  $E^0\times E^0$.
Therefore,the  graph of $\imath_x$ is a Lie group of $E^0\times E^0$. 
Therefore, that $i_x$ is a smooth diffeomorphism of $E^0$. 
We now argue as above to conclude that $E$ is a Lie group. 
Therefore we get the following short exact sequences of topological groups,
\[1\rightarrow A\overset\imath\rightarrow E\overset\pi\rightarrow G\rightarrow 1.\]

Since $E$ and $G$ are Lie groups with a continuous 
group homomorphism $\pi: E\rightarrow G$,
we see that graph of $\pi$ is a closed subgroup of 
$E\times G$ which is a Lie group. 
Therefore graph of $\pi$ is a Lie subgroup. 
This implies that $\pi$ is smooth. By implicit function
theorem, $\pi$ admits a  smooth cross section in a neighbourhood of identity. 
We use  arguments similar to those
 used in proving Lemma \ref{extendsection}
and extend this to a locally smooth measurable
 cross section $\sigma$ from $G$ to $E$.
This concludes proof of Theorem \ref{lsm-Lie}. 
\end{proof}
\subsection{A comparison theorem}
In this section, as a corollary of 
 positive solution to Hilbert's fifth problem, we
show the following: 
\begin{theorem}
Let $G$ be a Lie group and $A$ be a smooth $G$-module. Then the natural map, 
\[H^2_{lsm}(G,A)\to H^2_{lcm}(G,A),\]
is an isomorphism. 
\end{theorem}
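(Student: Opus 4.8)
The plan is to show the natural map, induced by the inclusion of cochain complexes $C^\bullet_{lsm}(G,A)\hookrightarrow C^\bullet_{lcm}(G,A)$, is a bijection by comparing the extension-theoretic descriptions of the two cohomology groups. The guiding principle is that, for finite-dimensional Lie groups $G$ and $A$, a locally split extension of $G$ by $A$ is automatically a Lie group --- this is where the positive solution of Hilbert's fifth problem enters --- while a continuous homomorphism of Lie groups is automatically smooth. Together these should show that the phrase ``locally split extension of $G$ by $A$'' carries the same content whether one works with locally continuous or with locally smooth cochains, and the isomorphism is then read off from Theorem \ref{lcm} and Theorem \ref{lsm-Lie}.

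For surjectivity, I would start with a class in $H^2_{lcm}(G,A)$ represented by a locally continuous measurable $2$-cocycle $F\colon G\times G\to A$, which we may assume normalised, and form the associated abstract extension $1\to A\overset\imath\to E\overset\pi\to G\to 1$. By Theorem \ref{maintheorem} (equivalently by Theorem \ref{lcm}) the group $E$, with its topology, is a locally compact second countable topological group, and by construction a neighbourhood of identity in $E$ is a product of open sets in $A$ and in $G$ with the product topology; since $A$ and $G$ are manifolds, $E$ has a locally Euclidean neighbourhood of identity, hence (being a topological group) is locally Euclidean, hence is a Lie group by the positive solution of Hilbert's fifth problem. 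Now $\pi\colon E\to G$ is a continuous homomorphism of Lie groups, hence smooth, so by the implicit function theorem it admits a smooth section on a neighbourhood of identity in $G$; arguing as in the proof of Lemma \ref{extendsection} (or of Remark \ref{lindel}) we extend this to a measurable section $\sigma\colon G\to E$ that is smooth near identity and satisfies $\sigma(e)=e_E$. Since $E$ is a Lie group, its multiplication and inversion are smooth, so $F_\sigma(s_1,s_2)=\sigma(s_1)\sigma(s_2)\sigma(s_1s_2)^{-1}$ is smooth near $(e,e)$ in $G\times G$, and measurability of $\sigma$ together with measurability of the group operations on $E$ forces $F_\sigma$ to be measurable; thus $F_\sigma\in Z^2_{lsm}(G,A)$. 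Finally $F$ is itself the cocycle attached to the tautological (locally continuous measurable) section $g\mapsto(0,g)$ of the \emph{same} extension $E$, so by the change-of-section computation establishing Theorem \ref{lcm} the cocycles $F$ and $F_\sigma$ differ by the coboundary of a $1$-cochain in $C^1_{lcm}(G,A)$; hence the class of $F_\sigma$ in $H^2_{lsm}(G,A)$ maps to the class of $F$.

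For injectivity, suppose $F\in Z^2_{lsm}(G,A)$ becomes a coboundary in $C^\bullet_{lcm}(G,A)$, say $F=dh$ with $h\in C^1_{lcm}(G,A)$. By Theorem \ref{lsm-Lie} the extension $E$ attached to $F$ is a Lie group, and the relation $F=dh$ is exactly the statement that the map $g\mapsto(-h(g),g)$ (in the coordinates $E=A\times G$) is a group homomorphism splitting $\pi$. This map is measurable and continuous near identity, hence continuous (a homomorphism continuous at identity is continuous), hence a continuous homomorphism of Lie groups, hence smooth; therefore its $A$-component $-h$ is smooth, so $h\in C^1_{lsm}(G,A)$ and $F=dh$ already in $C^\bullet_{lsm}(G,A)$, whence $[F]=0$ in $H^2_{lsm}(G,A)$.

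The step I expect to be the crux is the verification that the extension group $E$ is genuinely a Lie group --- that is, that it is locally Euclidean so that Hilbert's fifth problem applies --- together with the bookkeeping needed to be sure that the locally smooth cocycle produced above really represents the class one started with (measurability of $F_\sigma$, smoothness of the group operations of $E$ near identity, and the change-of-section identity). The remaining ingredients --- automatic smoothness of continuous homomorphisms of Lie groups, and the extension of a local section to a global measurable one --- are either standard or already established in the excerpt.
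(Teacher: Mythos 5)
Your proposal is correct and follows essentially the same route as the paper: both prove surjectivity by topologizing the extension attached to a locally continuous cocycle, invoking the positive solution of Hilbert's fifth problem to make it a Lie group, and using automatic smoothness of $\pi$ plus the implicit function theorem to produce a locally smooth section whose cocycle is cohomologous to the original; and both prove injectivity by observing that a splitting homomorphism continuous at the identity is continuous, hence smooth. The only cosmetic difference is that the paper applies Hilbert's fifth problem to the identity component $E^c$ rather than to $E$ itself, which changes nothing.
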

\begin{proof}
Let  $F:G\times G\rightarrow A$ be a locally continuous
 measurable $2$-cocycle on $G$ with values in $A$.
 We shall show that $F$ is cohomologous to a locally smooth measurable 
$2$-cocycle $b$.
\vspace{5mm}
By Theorem \ref{maintheorem}, we obtain a locally split (topological) extension
$E$ of $G$ by $A$:
\[1\rightarrow A\overset\imath\rightarrow E\overset\pi\rightarrow G\rightarrow 1.\]
Denote by $E^c$ the connected component of $E$ containing  identity.
Since the extension is locally split and $G$ and $A$ are Lie groups,
 it follows that  $E^c$ is locally Euclidean. Hence  by 
 positive solution to Hilbert fifth problem 
({\it cf.} \cite{yamabe},\cite{gleason},\cite{zippin}),
 we conclude  that $E^c$ is a Lie group. 

Now the map $\pi|_{E^c}\rightarrow G$ is a  continuous homomorphism.
 Hence the graph of $\pi|_{E^c}$
is a closed subgroup of the Lie group $E^c\times G$. 
Therefore,  it is a Lie subgroup and this shows that the
 projection map   $\pi|_{E^c}$ is smooth. Applying the 
 implicit function theorem, we can find a smooth cross section
of $\pi$ in a neighbourhood of identity on $G$ to $E^c$.
 By arguments similar
to Lemma \ref{extendsection}, we extend this to a measurable section $\sigma$
from $G$ to $E$.

The section  $\sigma$ gives raise to  a $2$-cocycle $b_\sigma:G\times G\rightarrow A$ in $Z^2_{lsm}(G,A)$
given by the formula $b_\sigma(s_1,s_2)=\sigma(s_1)\sigma(s_2)\sigma(s_1s_2)^{-1}$.
One can observe that $b_\sigma$ is cohomologous to $F$ in 
$Z^2_{lcm}(G,A)$. This yields a surjective map 
\[H^2_{lsm}(G,A)\rightarrow H^2_{lcm}(G,A).\]

We, next claim this map to be injective.
Suppose a class $\underbar b\in H^2_{lsm}(G,A)$
 is trivial in $H^2_{lcm}(G,A)$.
Corresponding to $\underbar b\in H^2_{lsm}(G,A)$,
 by Theorem \ref{lsm-Lie} we obtain a Lie group
$E$ which is an extension of $G$ by $A$ 
Since  $\underbar b=0$ in $H^2_{lcm}(G,A)$, 
there exists a locally continuous measurable section
$\sigma:G\rightarrow E$ which is a group homomorphism.
 Since it is continuous at identity, it is 
continuous everywhere. Hence we obtain a 
continuous isomorphism between the Lie groups 
$E$ and $A\rtimes G$. By an application of the 
closed graph theorem, this isomorphism is smooth. 
Therefore, the cohomology class $\underbar b$ is
 trivial in $H^2_{lsm}(G,A)$.
\vspace{5mm}
Hence it follows that 
 \[H^2_{lsm}(G,A)\rightarrow H^2_{lcm}(G,A)\]
is an isomorphism.
\end{proof}

\end{document}